\newtheorem{teo}{Theorem}[section]
\newtheorem{pro}[teo]{Proposition}
\newtheorem{lem}[teo]{Lemma}
\newtheorem{cor}[teo]{Corollary}
\theoremstyle{definition}
\newtheorem{dfn}[teo]{Definition}
\newtheorem{exa}[teo]{Example}
\theoremstyle{remark}
\newtheorem{rem}[teo]{Remark}
\begin{document}

\title[Singularities of Brill-Noether loci]{Singularities of Brill-Noether loci for vector bundles on a curve}
\author[Casalaina-Martin]{Sebastian Casalaina-Martin }
\thanks{The first author was partially supported by NSF MSPRF grant DMS-0503228.  He would like to thank Harvard University for its hospitality during the preparation of this paper.}
\address{University of Colorado at Boulder, Department of Mathematics,  Campus Box 395, Boulder, CO 80303, USA}
\email{casa@math.colorado.edu}

\author[Teixidor i Bigas]{Montserrat Teixidor i Bigas}
\address{Mathematics Department, Tufts University, Medford MA 02155, USA}
\email{montserrat.teixidoribigas@tufts.edu}

\date{\today}

\begin{abstract}
In this paper we consider the singularities of the varieties
parameterizing stable vector bundles of fixed rank and degree with
sections on a smooth curve of genus at least two. In particular, we
extend results of Y. Laszlo, and of the second author, regarding the
singularities of generalized theta divisors.
\end{abstract}
\maketitle

\section*{Introduction}

Given a smooth projective curve $C$ over $\mathbb{C}$ of genus $g\ge 2$, let $\mathscr{U}(r,d)$  denote the moduli
space of stable vector bundles of rank $r$ and degree $d$ on $C$.  It is well known that this is a smooth
 quasi-projective variety of dimension $r^2(g-1)+1$, and for non-negative integers $k$ there are subschemes
  $B^k_{r,d}\subseteq \mathscr U(r,d)$ parameterizing those vector bundles whose space of global sections is at
   least $k$ dimensional; i.e. as a set
$$
B^k_{r,d}=\{[E]\in \mathscr{U}(r,d): h^0(C,E)\ge k\}.
$$

The  $B^k_{r,d}$ can  be viewed as degeneracy loci, and as a
consequence the ``expected'' dimension of $B^k_{r,d}$ is  given by
the function
$$
\rho(g,k,\mathscr U(r,d)):= \dim \mathscr U(r,d) -k(-\chi +k),
$$
where $\chi=d+r(1-g)$.  It is still not fully understood when these subschemes have the expected dimension,
 or even when they are nonempty.  We refer the reader to
\cite{gt} for more details on this question.

The goal of this paper is to study the singularities of these schemes.   In particular, given a point
 $[E]\in B^k_{r,d}$, we would like to relate the multiplicity of $B^k_{r,d}$ at $[E]$ to the space of global sections
$H^0(C,E)$.  In the case of line bundles of degree $g-1$, this
question goes back to Riemann: the Riemann Singularity Theorem
states that for $[L]\in
\mathscr{U}(1,g-1)=\textnormal{Pic}^{g-1}(C)$,
$\textnormal{mult}_{[L]} B^1_{1,g-1}=h^0(L)$. This was generalized
by Kempf \cite{kt}, who showed that for all $0\le d\le g-1$, and
$[L]\in \mathscr{U}(1,d)=\textnormal{Pic}^{d}(C)$,
$\textnormal{mult}_{[L]} B^1_{1,d}=\binom {h^1(L)}{h^0(L)-1}$. In
another direction, Riemann's result was generalized in \cite{l} by
Laszlo, where it was  shown that if $d=r(g-1)$,  and $[E]\in
B^1_{r,r(g-1)}$, then $\textnormal{mult}_{[E]} B^1_{r,r(g-1)}
=h^0(C,E)$. We point out that the Zariski tangent spaces to the
$B^k_{r,d}$  are well understood:  at a point $E\in B^k_{r,d}$,  if
$E\notin B^{k+1}_{r,d}$, the tangent space to $B^k_{r,d}$
 at $E$ is
 $$
 T_EB^k_{r,d}=(\operatorname{Image} \mu)^\perp
 $$
 where
$$
\mu:H^0(E)\otimes H^0(K_C\otimes E^\vee)\to H^0(E\otimes E^\vee \otimes K_C)
$$
is the cup product mapping, and if $E\in  B^{k+1}_{r,d}$, then
$T_EB^k_{r,d}=T_E\mathscr U(r,d)$ (see \cite{gt} for more details).
 Using work of Kempf \cite{kt} as motivation, the following extends the results above by giving a description
  of $C_EB^k_{r,d}$, the tangent cone to  $B^k_{r,d}$ at $E$.

\begin{teo}\label{teo1}
  Suppose $0\le \rho(g,1,\mathscr{U}(r,d))< \dim \mathscr{U}(r,d)$.  For all  $E\in B^1_{r,d}$,
  the tangent cone $C_E B_{k,d}$ is a reduced, Cohen-Macaulay and normal subvariety of
   $T_E\mathscr U(r,d)=H^1(E^\vee\otimes E)$, defined by the $h^0(E)\times h^0(E)$ minors of an
   $h^0(E)\times h^1(E)$ matrix with entries in $H^1(E^\vee \otimes E)^\vee$.
Moreover, $ \textnormal{mult}_E B^1_{r,d}=\binom{h^1(E)}{h^0(E)-1}$.
    \end{teo}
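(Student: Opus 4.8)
The plan is to realize $W^0_{r,d}$ locally near $[E]$ as a determinantal variety and then transport the known structure of generic determinantal varieties to the tangent cone. First I would choose a small (\'etale or analytic) neighborhood $S$ of $[E]$ in $\mathscr U(r,d)$ carrying a Poincar\'e bundle $\mathcal E$ on $C\times S$, and let $\pi\colon C\times S\to S$ be the projection. Cohomology and base change let me represent $R\pi_*\mathcal E$ by a two-term complex of locally free sheaves $[K^0\xrightarrow{\ \gamma\ }K^1]$ on $S$, chosen minimal at the closed point $s_0=[E]$ so that $K^0$ and $K^1$ have ranks $h^0(E)$ and $h^1(E)$ and $\gamma(s_0)=0$. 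Since $h^0(E_s)=\dim\ker\gamma_s$, trivializing $K^0,K^1$ identifies $W^0_{r,d}$ inside $S$ with the locus where $\gamma$, an $h^0(E)\times h^1(E)$ matrix of functions, has rank $\le h^0(E)-1$; that is, $W^0_{r,d}=V(\text{maximal minors of }\gamma)$ locally.

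Next I would identify the linear part of $\gamma$. Because $\gamma(s_0)=0$, deformation theory (Kodaira--Spencer) identifies the differential $d\gamma_{s_0}\colon T_{s_0}S=H^1(E^\vee\otimes E)\to\operatorname{Hom}(H^0(E),H^1(E))$ with the cup-product map, i.e. the transpose of the Petri map $\mu$ appearing in the Zariski tangent space formula $T_EW^0_{r,d}=(\operatorname{Image}\mu)^\perp$. Writing $\gamma=\gamma_1+\gamma_2+\cdots$ with $\gamma_1=d\gamma_{s_0}$ a matrix of linear forms in $H^1(E^\vee\otimes E)^\vee$, the tangent cone $C_EW^0_{r,d}$ is cut out by the leading terms of the maximal minors of $\gamma$. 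The heart of the argument is to show that these leading terms are exactly the maximal minors of $\gamma_1$ and that the resulting determinantal scheme $V(\text{maximal minors of }\gamma_1)\subseteq H^1(E^\vee\otimes E)$ has the expected codimension $-\chi+1$. This is precisely the content of the theorem's description of the tangent cone.

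Granting the expected-codimension statement, the remaining properties follow from the classical theory of generic determinantal varieties. The corank-one generic determinantal variety $D_{h^0-1}\subseteq\operatorname{Hom}(H^0(E),H^1(E))$ of matrices of rank $\le h^0(E)-1$ is reduced, Cohen--Macaulay and normal, with resolution given by the Eagon--Northcott complex (Eagon--Northcott, Hochster--Eagon). Since $V(\text{maximal minors of }\gamma_1)=\gamma_1^{-1}(D_{h^0-1})$ meets $D_{h^0-1}$ in the expected codimension, it inherits reducedness, Cohen--Macaulayness and normality, yielding the first assertion. For the multiplicity I would use $\operatorname{mult}_EW^0_{r,d}=\operatorname{mult}_0 C_EW^0_{r,d}$, which equals the degree of the projectivized corank-one determinantal variety; this classical degree (via Thom--Porteous, or directly from the Eagon--Northcott resolution) is $\binom{h^1(E)}{h^0(E)-1}$, matching Kempf's line-bundle formula \cite{kt} and specializing to Laszlo's $h^0(E)$ in the balanced case $h^1(E)=h^0(E)$ \cite{l}.

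The main obstacle is the expected-codimension claim for $\gamma_1$, equivalently the transversality of the cup-product map to the rank stratification of $\operatorname{Hom}(H^0(E),H^1(E))$. The hypothesis $0\le\rho(g,0,\mathscr U(r,d))$ guarantees $-\chi+1\le\dim\mathscr U(r,d)$, so a determinantal scheme of expected codimension is a genuine nonempty subvariety, while $\rho<\dim\mathscr U(r,d)$ forces $-\chi+1>0$, i.e. $h^1(E)\ge h^0(E)$, so that the corank-one locus is proper and the minors are non-trivial. Establishing that the cup product actually attains this expected codimension---rather than merely bounding the codimension from above---is the delicate geometric input, and I expect it to rest on a Petri-type genericity statement together with the minimality of the complex $[K^0\to K^1]$ at $s_0$.
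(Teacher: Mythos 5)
Your setup---realizing $W^0_{r,d}$ near $[E]$ as the degeneracy locus of a two-term complex $K^0\xrightarrow{\gamma}K^1$ with $\gamma(s_0)=0$, and identifying the linear part $\gamma_1$ of $\gamma$ with the cup product---matches the paper's framework, and your degree computation for the multiplicity is the right formula. But the two claims you defer are exactly where the content of the theorem lies, and the general principles you invoke to dispatch them are false. First, expected codimension of $V(\text{max minors of }\gamma_1)$ does \emph{not} imply that the leading forms of the maximal minors of $\gamma$ generate the ideal of leading forms of the ideal of $W^0_{r,d}$; all one gets for free is the scheme-theoretic inclusion $C_EW^0_{r,d}\subseteq V(\text{max minors of }\gamma_1)$. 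Second, the inheritance claim---that $\gamma_1^{-1}(D_{h^0-1})$ inherits reducedness, Cohen--Macaulayness and normality from the generic determinantal variety once it has expected codimension---is false: only Cohen--Macaulayness passes to linear sections of expected codimension. Concretely, pulling back the generic $2\times 2$ determinant along $(x,y)\mapsto\left(\begin{smallmatrix}x&y\\0&x\end{smallmatrix}\right)$ gives $V(x^2)\subset\mathbb{A}^2$ (expected codimension, non-reduced), and along $(x,y,z)\mapsto\left(\begin{smallmatrix}x&y\\z&0\end{smallmatrix}\right)$ gives $V(yz)\subset\mathbb{A}^3$ (expected codimension, CM, but reducible and non-normal). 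So nothing in your argument rules out the tangent cone being a proper subscheme of the determinantal scheme, or that scheme being non-reduced or non-normal. Finally, your proposed missing ingredient, ``a Petri-type genericity statement,'' cannot be what is needed: the theorem holds for \emph{every} $E\in W^0_{r,d}$ on \emph{every} smooth curve of genus $\geq 2$, where injectivity of the full Petri map $\mu$ routinely fails.

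The paper's proof supplies the missing mechanism by working upstairs, following Kempf. One introduces the local resolution $a:G^0_{r,d}\to W^0_{r,d}$ parameterizing pairs $(E',V)$ with $V\subseteq H^0(E')$ one-dimensional. The key input, which is automatic precisely when $k=0$, is that for every one-dimensional $W\subseteq H^0(E)$ the restricted cup product $\mu_W:W\otimes H^0(E^\vee\otimes K_C)\to H^0(E\otimes E^\vee\otimes K_C)$ is injective (multiplication by a nonzero section of a vector bundle is injective on sections). From this one gets that $G^0_{r,d}$ is smooth of dimension $\rho$ along the fiber $a^{-1}(E)\cong\mathbb{P}H^0(E)$, that $a$ is birational onto its image near $E$, and that the normal cone of the fiber is the incidence variety $\{(W,\alpha): \alpha\perp\operatorname{image}(\mu_W)\}$ over $\mathbb{P}H^0(E)$. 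Kempf's lemma (Lemma \ref{lemk} in the paper, from \cite{kt} and \cite{acgh}) then converts this package, via a cohomological argument on the (irreducible, smooth over the Grassmannian) projectivized normal cone, into all the conclusions at once: the tangent cone \emph{equals} the determinantal scheme of the cup-product matrix, it is reduced, CM and normal, and $\operatorname{mult}_EW^0_{r,d}=\binom{h^1(E)}{h^0(E)-1}$. If you wanted to salvage your direct approach, you would need an independent proof that $V(\text{max minors of }\gamma_1)$ is reduced, irreducible and of expected codimension (then the inclusion above plus equality of dimensions would force equality with the tangent cone); but establishing that irreducibility and reducedness for the cup-product matrix of an arbitrary $E$ is essentially equivalent to running Kempf's argument.
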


The case $r=1$ is a special case of Kempf's theorem \cite{kt}.  Theorem \ref{kempfteo} gives more details
 about the matrix defining the tangent cone, analogous to the stronger statement in Kempf's paper, and the extension
  to the case $k>1$ in  Arbarello et al. \cite[Theorem VI 2.1]{acgh}.  The proofs of Theorems \ref{teo1} and
  \ref{kempfteo} are essentially the same as the proofs given in \cite{kt} and \cite{acgh};
   we give an outline in Section \ref{seckempf}.

There are certain generalizations of Clifford's theorem to rank two and rank three vector bundles, due to
 Cilleruelo-Sols \cite{cs} and Lange-Newstead \cite{ln}, respectively, which together with Theorem \ref{teo1}
 give bounds on the multiplicity of singular points of the $B^1_{r,d}$ for $r=2,3$.  Results of this type have
 been shown before.  For instance,
using a  ``Clifford'' type theorem for rank two vector bundles of degree $2g-2$, coupled with the special case
of Theorem \ref{teo1} cited above, Laszlo  showed that for $E\in \mathscr U(2,2(g-1))$, if $C$ is not hyperelliptic,
 then $\textnormal{mult}_E B^1_{2,2(g-1)}\le g$  (Remark after \cite{l} Proposition IV.2).
It would be interesting to know if one could use multiplier ideals to give similar bounds.
  In Section \ref{multid} we make a short remark along these lines using results of Ein-Lazarsfeld
  \cite{el} on pluri-theta divisors on principally polarized abelian varieties.
   The bounds  we obtain in this way are far from those noted above.

 Given a vector bundle $E\in \mathscr U(r',d')$, one can also consider the subvarieties
 $B^k_E \subseteq \mathscr U(r,d)$ parameterizing those vector bundles which, when tensored by $E$,
  have at least $k$ linearly independent global sections; i.e. as a set
$$
B^k_E=\{M\in \mathscr U(r,d):h^0(E\otimes M)\ge k\}.
$$
  Although many of the arguments used to prove Theorem \ref{teo1} carry over to this situation as well,
   it is difficult to verify when certain cup product computations needed for the proof can actually be carried out.
In Section 2, we state some partial results in these cases similar to Theorem \ref{teo1},
 where we assume that these cup products have the appropriate properties (see Remark \ref{remgencup}).

In the case that $k=1$ and $rd'+r'd=rr'(g-1)$, the expected
codimension of $B^1_E$ is one, and in this case we will use the
notation $\Theta_E=B^1_E$ for these ``generalized theta divisors.''
When $\Theta_E$ is a divisor, i.e. when $\Theta_E\ne \mathscr
U(r,d)$, it is a standard result that for $M\in \mathscr U(r,d)$,
$\textnormal{mult}_M\Theta_E\ge h^0(E\otimes M)$, and the first goal
will be to determine when equality holds. In section 3.1, we provide
a proof of the following:

\begin{teo} \label{teo2}
Suppose $rd'+r'd=rr'(g-1)$.
Let $E\in \mathscr U(r',d')$ and suppose $\Theta_E \subseteq \mathscr U(r,d)$ is a divisor.
 Given $M\in \mathscr U(r,d)$ and an extension of vector bundles
$$
0\rightarrow L \rightarrow E \rightarrow F \rightarrow 0,
$$
it follows that
$$\textnormal{mult}_M\Theta_E\ge h^0(L\otimes M)+h^1(F\otimes M).$$
Moreover, if $h^0(E\otimes M)\le 2$, then
$\textnormal{mult}_M\Theta_E>h^0(E\otimes M)$ if and only if
there exists an extension as above such that
$h^0(L\otimes M)+h^1(F\otimes M)>h^0(E\otimes M)$.\end{teo}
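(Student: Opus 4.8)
The plan is to work with the standard determinantal presentation of $\Theta_E$ and to read off the multiplicity from the cup-product (Petri) map. First I would choose an effective divisor $D$ positive enough that $h^1(E\otimes M'(D))=0$ for all $M'$ in a neighborhood of $M$; the evaluation sequence for $E\otimes M'(D)$ then realizes $\Theta_E$ locally as $\{\det\gamma=0\}$ for a morphism $\gamma\colon\mathbf E^0\to\mathbf E^1$ of vector bundles of equal rank (equal because $\chi(E\otimes M)=0$), whose kernel and cokernel at $M'$ compute $H^0(E\otimes M')$ and $H^1(E\otimes M')$. Passing to the Schur complement at $M$, this reduces to an $h^0(E\otimes M)\times h^0(E\otimes M)$ matrix $\Delta$ with entries in the maximal ideal $\mathfrak m$ of $\mathcal O_{\mathscr U(r,d),M}$ whose linear part is the cup product $\mu_E\colon H^0(E\otimes M)\to H^1(E\otimes M)$ against $T_M\mathscr U(r,d)=H^1(M^\vee\otimes M)$, as in \cite{acgh} and \cite{tbn}. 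Thus $\textnormal{mult}_M\Theta_E=\textnormal{ord}_M\det\Delta$, which already yields the bound $\ge h^0(E\otimes M)$.

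To get the sharper bound, tensor the extension $0\to L\to E\to F\to 0$ by $M$ and let $\alpha\colon H^0(L\otimes M)\hookrightarrow H^0(E\otimes M)$ and $\beta\colon H^1(E\otimes M)\twoheadrightarrow H^1(F\otimes M)$ be the maps in the long exact sequence. Naturality of cup product shows $\mu_E(v)\circ\alpha$ factors through $\textnormal{Image}\bigl(H^1(L\otimes M)\to H^1(E\otimes M)\bigr)=\ker\beta$, so $\beta\circ\mu_E(v)\circ\alpha=0$ for every $v$. Choosing bases of $H^0(E\otimes M)$ and $H^1(E\otimes M)$ adapted to $\alpha$ and $\beta$, the linear part of $\Delta$ acquires a zero block of size $h^1(F\otimes M)\times h^0(L\otimes M)$, so those entries lie in $\mathfrak m^2$. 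Writing $a=h^0(L\otimes M)$, $b=h^1(F\otimes M)$, $k=h^0(E\otimes M)$ and expanding $\det\Delta$ over permutations, any permutation must route at least $\max(0,a+b-k)$ of the last $b$ rows through the first $a$ columns, i.e. through $\mathfrak m^2$-entries; hence every monomial has order $\ge k+\max(0,a+b-k)=\max(k,a+b)$, and $\textnormal{mult}_M\Theta_E\ge a+b$. This is the first assertion.

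For the second assertion, note that the degree-$k$ part of $\det\Delta$ is exactly $\det\mu_E$, so $\textnormal{mult}_M\Theta_E>k$ if and only if $\det\mu_E\equiv0$ on $H^1(M^\vee\otimes M)$, i.e. the $k\times k$ matrix $\mu_E(v)$ is singular for every $v$. The implication $(\Leftarrow)$ of the stated equivalence is immediate from the first part. For $(\Rightarrow)$ I would use the hypothesis $k\le 2$: a matrix of linear forms of size at most two with identically vanishing determinant, by unique factorization in the polynomial ring on $H^1(M^\vee\otimes M)$, must have either a fixed vector $s_0\in H^0(E\otimes M)$ in $\ker\mu_E(v)$ for all $v$, or, dually, a fixed covector in $H^1(E\otimes M)^\vee$ annihilating every $\textnormal{Image}\,\mu_E(v)$.

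It remains to turn $s_0\cup v=0$ (for all $v$) into an actual extension, and this is the step I expect to be the main obstacle. Viewing $s_0$ as a nonzero map $M^\vee\to E$ and a section $\eta\in H^0(K_C\otimes E^\vee\otimes M^\vee)=H^1(E\otimes M)^\vee$ as a map $E\to K_C\otimes M^\vee$, the compatibility of cup product with Serre duality gives $\langle s_0\cup v,\eta\rangle=\langle\eta\circ s_0,\,v\rangle$, where $\eta\circ s_0\in H^0(K_C\otimes M\otimes M^\vee)$ pairs with $v$ by the trace duality on $\textnormal{End}\,M$. Hence $s_0\cup v=0$ for all $v$ forces $\eta\circ s_0=0$ for every global $\eta$, i.e. every homomorphism $E\to K_C\otimes M^\vee$ vanishes on $\textnormal{Image}(s_0)$ and so on its saturation $L\subseteq E$. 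With $F=E/L$ this says the restriction $H^0(K_C\otimes E^\vee\otimes M^\vee)\to H^0(K_C\otimes L^\vee\otimes M^\vee)$ is zero, equivalently (Serre dual to $\beta$) that $\beta$ is injective, so $h^1(F\otimes M)=k$; since $s_0$ factors through $L$ we get $h^0(L\otimes M)\ge 1$, whence $h^0(L\otimes M)+h^1(F\otimes M)\ge k+1>k$. The fixed-covector case is symmetric, applying the same construction to $K_C\otimes E^\vee$ and $M^\vee$ to produce a quotient. The crux is exactly this reassociation identity, which converts a cohomological nondegeneracy into the sheaf statement that $\textnormal{Image}(s_0)$ lies in the common kernel of all sections of $K_C\otimes E^\vee\otimes M^\vee$; I expect $h^0(E\otimes M)\le 2$ to be essential here, since for $k\ge 3$ there exist everywhere-singular matrices of linear forms with no common kernel or cokernel, so the subbundle $L$ cannot be extracted.
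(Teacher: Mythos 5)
Your proof is correct, but your route to the first inequality differs from the paper's. For $\textnormal{mult}_M\Theta_E\ge h^0(L\otimes M)+h^1(F\otimes M)$, the paper (Lemma \ref{lemfirst}) argues deformation-theoretically: it fixes an arbitrary first-order deformation $\mathscr M$ of $M$, chases the diagram of long exact sequences coming from tensoring $0\to L\to E\to F\to 0$ with $0\to M\to\mathscr M\to M\to 0$, and proves by a dimension count that $h^0(E\otimes\mathscr M)\ge h^0(L\otimes M)+h^1(F\otimes M)$, i.e.\ at least that many sections always lift to first order; the multiplicity bound then follows from the preliminaries relating $\textnormal{mult}_M\Theta_E$ to lengths of $R^1\pi_{2*}$ along arcs. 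You instead work directly with the local determinantal equation: granting the standard fact that $\Theta_E$ is locally $\det\Delta=0$ with $\Delta$ a $k\times k$ matrix over $\mathfrak m$ whose linear part is the cup product (the same fact underlying the paper's Proposition at the start of its Section 3.1), you exhibit a zero block of size $h^1(F\otimes M)\times h^0(L\otimes M)$ in that linear part via $\beta\circ\mu_E(v)\circ\alpha=0$, and bound $\operatorname{ord}\det\Delta$ by the permutation-expansion count. Both are valid; your argument is more self-contained at the level of commutative algebra and makes the inequality transparent as a statement about orders of minors, while the paper's lifting formulation is what it then iterates for the second-order computations (Lemmas \ref{lemtec}, \ref{lemsecorder} and Theorem \ref{pro}), which a one-shot determinant bound does not directly give. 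For the second assertion your argument essentially coincides with the paper's: the paper invokes Eisenbud--Harris to put a zero row or column in the $2\times 2$ matrix of linear forms (your UFD argument proves the same statement in this size), and its two ``left to the reader'' lemmas on $\cup^\vee s$ are precisely your reassociation identity $\langle s_0\cup v,\eta\rangle=\langle\eta\circ s_0,v\rangle$ together with the passage to the saturation $L$ of $\operatorname{Image}(M^\vee\to E)$; your closing dichotomy (fixed vector versus fixed covector, handled by applying the construction to $K_C\otimes E^\vee$) matches the paper's remark that the two conditions are equivalent for $E$ and $E^\vee\otimes K_C$.
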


This generalizes a result of Laszlo \cite[V.7]{l} in  the case that $r=1$, $d=0$, $r'=2$ and $d'=2(g-1)$,
 i.e. $E\in \mathscr U(2,2(g-1))$ and $M\in \textnormal{Pic}^0(C)$.
 There is an example in that paper, given in Proposition IV.9  (cf. Remark \ref{remex}),
 showing that when $h^0(E\otimes M)>2$ it is possible  to have $\textnormal{mult}_M\Theta_E>h^0(E\otimes M)$
 even if such an extension does not exist.  On the other hand, Theorem \ref{teo2} gives a complete description
 of the singular locus of $\Theta_E$:

\begin{cor}
$M\in \textnormal{Sing } \Theta_E$ if and only if  either
\begin{enumerate}
\item  $h^0(E\otimes M)\ge 2$, or
\item $h^0(E\otimes M)=1$, and there exists an extension of vector bundles
$$
0\rightarrow L \rightarrow E \rightarrow F \rightarrow 0,
$$
such that $h^0(L\otimes M)+h^1(F\otimes M)>h^0(E\otimes M)$.

\end{enumerate}
\end{cor}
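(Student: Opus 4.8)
The plan is to derive this corollary purely formally from Theorem \ref{teo2}, the only additional inputs being the standard fact that a point $M$ of the divisor $\Theta_E$ in the smooth variety $\mathscr{U}(r,d)$ is singular precisely when $\textnormal{mult}_M\Theta_E\ge 2$, together with the defining property that $M\in \Theta_E$ if and only if $h^0(E\otimes M)\ge 1$. Accordingly I would first reformulate the condition $M\in \textnormal{Sing }\Theta_E$ as the simultaneous requirements $h^0(E\otimes M)\ge 1$ and $\textnormal{mult}_M\Theta_E\ge 2$, and then split into cases according to the value of $h^0(E\otimes M)$.

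In the case $h^0(E\otimes M)\ge 2$, the basic inequality $\textnormal{mult}_M\Theta_E\ge h^0(E\otimes M)$ supplied by Theorem \ref{teo2} immediately yields multiplicity at least two, so $M$ is singular; this is exactly alternative (1). Conversely, alternative (1) forces $M\in \Theta_E$ and $\textnormal{mult}_M\Theta_E\ge 2$ by the same inequality, so (1) accounts completely for the singular points with $h^0(E\otimes M)\ge 2$, with no appeal to the finer equivalence needed.

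In the case $h^0(E\otimes M)=1$, the point $M$ automatically lies on $\Theta_E$, and it is singular if and only if $\textnormal{mult}_M\Theta_E\ge 2$, i.e. if and only if $\textnormal{mult}_M\Theta_E>h^0(E\otimes M)$. Since here $h^0(E\otimes M)=1\le 2$, the ``if and only if'' clause of Theorem \ref{teo2} applies verbatim and translates this into the existence of an extension $0\rightarrow L \rightarrow E \rightarrow F \rightarrow 0$ with $h^0(L\otimes M)+h^1(F\otimes M)>h^0(E\otimes M)$, which is precisely alternative (2). Finally, when $h^0(E\otimes M)=0$ the point $M$ does not lie on $\Theta_E$ at all, so it cannot be singular, consistently with the fact that neither (1) nor (2) can then hold.

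I do not anticipate any genuine obstacle, since the mathematical content is entirely contained in Theorem \ref{teo2}; the corollary is just the bookkeeping of its inequality and its equivalence across the three ranges $h^0(E\otimes M)=0$, $=1$, and $\ge 2$. The one point deserving a word of justification is the equivalence between singularity and multiplicity at least two, valid because $\Theta_E$ is a Cartier divisor in a smooth ambient variety. It is worth emphasizing that the hypothesis $h^0(E\otimes M)\le 2$ in the second clause of Theorem \ref{teo2} is exactly what confines alternative (2) to the value $h^0(E\otimes M)=1$; the example cited after Theorem \ref{teo2} (from \cite{l}, cf. Remark \ref{remex}) shows that no such clean extension criterion survives once $h^0(E\otimes M)>2$, which is why alternative (1) must be left as an unconditional sufficient condition rather than refined further.
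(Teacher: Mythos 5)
Your proposal is correct and is essentially the argument the paper intends: the corollary is stated as an immediate consequence of Theorem \ref{teo2}, combining the standard inequality $\textnormal{mult}_M\Theta_E\ge h^0(E\otimes M)$ (stated just before Theorem \ref{teo2}; note it is not literally part of that theorem's statement, as you attribute it), the equivalence of singularity with multiplicity at least two for a divisor in a smooth variety, and the case split on $h^0(E\otimes M)=0,1,\ge 2$. No gaps.
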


It should be pointed out that in the case that $r=1$,  $d=0$ and $d'=2(g-1)$, i.e. $E\in \mathscr U(2,2(g-1))$
 and $M\in \textnormal{Pic}^0(C)$, a result of the second author \cite{t} shows that if $E$ is general
 in $\mathscr U(2,2(g-1))$, then case (2) above does not arise.  We extend this to the case where $E$ has arbitrary
 rank in Section \ref{secexc}.  On the other hand, in Theorem \ref{teoexc} we generalize a result of Laszlo
 \cite[Corollary V.5]{l}  to show that for any curve, there are stable vector bundles $E$ of all ranks greater than one
  where case (2) does occur.

The results cited so far use  first order deformation theory. By looking at higher order deformation theory,
in some special cases, we are able to determine the exact multiplicity at points $M$ such that
 $\textnormal{mult}_M \Theta_E>h^0(E\otimes M)$. In \S \ref{sectd}, we
 prove the following:

\begin{teo}\label{pro}
Suppose that $L$ and $K_C\otimes F^\vee$ are general line bundles in
$B^1_d(C)$, and that $e\in H^1(F^\vee \otimes L)$ is general.  Let
$$
0\rightarrow L \rightarrow E \rightarrow F\rightarrow 0
$$
be the extension associated to $e$. Then $E\in \mathscr
U(2,2(g-1))$, and
$$\textnormal{mult}_{\mathscr O_C} \Theta_E=h^0(L)+h^1(F)>h^0(E).$$
\end{teo}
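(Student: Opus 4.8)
The plan is to combine the lower bound coming from Theorem~\ref{teo2} with an upper bound obtained from a higher-order deformation analysis, after first checking that $E$ is stable and that $\Theta_E$ is a genuine divisor so that Theorem~\ref{teo2} applies. Write $a=h^0(L)$ and $b=h^1(F)=h^0(K_C\otimes F^\vee)$, so that $a,b\ge 1$ by hypothesis; since $\deg L=\deg(K_C\otimes F^\vee)=d$ we have $\deg F=2g-2-d$ and $\chi(E)=0$. One checks $d<g-1$ (otherwise the saturated sub-line bundle $L$ of degree $d\ge g-1=\mu(E)$ would already destabilize $E$), and then that no sub-line bundle of $E$ has degree $\ge g-1$: any such $N$ cannot lie in $L$, so it injects into $F$, and the locus of pairs $(N,N\hookrightarrow F)$ of degree in $[g-1,\deg F]$ for which the pullback of $e$ to $\operatorname{Ext}^1(N,L)$ vanishes is a proper subvariety of the space of extensions, whence a general $e$ yields a stable $E$. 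A parallel generic-injectivity statement for the connecting map $H^0(F\otimes M)\to H^1(L\otimes M)$ at a general $M\in\operatorname{Pic}^0(C)$ shows $h^0(E\otimes M)=0$ generically, so $\Theta_E$ is a divisor, and Theorem~\ref{teo2} applied to the given extension with $M=\mathscr O_C$ gives $\operatorname{mult}_{\mathscr O_C}\Theta_E\ge a+b$.

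Next I would prove the strict inequality $a+b>h^0(E)$. The long exact cohomology sequence of $0\to L\to E\to F\to 0$ gives $h^0(E)=a+\bigl(h^0(F)-\operatorname{rank}\delta\bigr)$, where $\delta\colon H^0(F)\to H^1(L)$ is cup product with $e$; hence $a+b>h^0(E)$ is equivalent to $\operatorname{rank}\delta>\chi(F)=g-1-d$. Since $h^0(F)=\chi(F)+b>\chi(F)$ and $h^1(L)=h^0(K_C\otimes L^\vee)=(g-1-d)+a>\chi(F)$, both the source and target of $\delta$ have dimension strictly larger than $\chi(F)$, so it suffices to know that a general $e$ makes $\operatorname{rank}\delta$ exceed $\chi(F)$. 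This follows once one knows the cup-product pairing $H^0(F)\otimes H^1(F^\vee\otimes L)\to H^1(L)$ is not so degenerate as to force every $\delta$ to have rank $\le\chi(F)$, a nondegeneracy point I would verify but expect to be routine.

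The heart of the proof, and the step I expect to be the main obstacle, is the upper bound $\operatorname{mult}_{\mathscr O_C}\Theta_E\le a+b$. The idea is to present $\Theta_E$ locally near $\mathscr O_C$ as the zero scheme of $\det\Psi(t)$, where $t$ is a local coordinate along a general smooth arc $M_t$ with $M_0=\mathscr O_C$ and $\Psi(t)$ is a square matrix (using $\chi(E)=0$) computing the deformed cohomology of $E\otimes M_t$, with $\Psi(0)$ of corank exactly $h^0(E)$; then $\operatorname{mult}_{\mathscr O_C}\Theta_E=\operatorname{ord}_{t=0}\det\Psi$. Since $\operatorname{mult}>h^0(E)$, the first-order (Zariski-tangent) data is degenerate and $\det\Psi$ vanishes to more than the minimal order, so one must expand $\Psi$ to higher order. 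Using the filtration $0\to L\to E\to F\to 0$, I would choose bases adapted to the extension so that $\Psi(t)$ becomes block structured, with diagonal blocks governing the deformations of $L\otimes M_t$ and $F\otimes M_t$ and an off-diagonal block recording the class $e$; after row and column reduction the order of $\det\Psi$ should split as a contribution $a=h^0(L)$ from the $L$-block (a Riemann–Kempf type vanishing along $M_t$) plus a contribution $b=h^1(F)$ from the $F$-block coupled to $e$.

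The remaining work is to show that for a general $e$ these two contributions are independent and each of minimal order, so that $\operatorname{ord}_{t=0}\det\Psi=a+b$ exactly, which together with the lower bound completes the proof. Verifying that the generality of $e$ propagates through the determinantal expansion—forcing the higher-order leading term to be nonzero of the predicted degree, so that no spurious extra vanishing occurs—is the delicate point, and is precisely where the hypotheses $L,\ K_C\otimes F^\vee\in W_d(C)$ together with the genericity of $e$ must be used.
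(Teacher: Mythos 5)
Your skeleton matches the paper's: the lower bound $\operatorname{mult}_{\mathscr O_C}\Theta_E\ge h^0(L)+h^1(F)$ comes from Theorem \ref{teo2} (i.e.\ Lemma \ref{lemfirst}) applied at $M=\mathscr O_C$; stability of $E$ and the inequality $h^0(L)+h^1(F)>h^0(E)>0$ for general $e$ are exactly the content of Laszlo's Proposition V.4, which the paper simply cites (this citation also disposes of your ``routine nondegeneracy point'' about $\operatorname{rank}\delta$, since either $\delta$ is injective or $H^1(L)\to H^1(E)$ kills nothing extra); and the divisor property in rank two is Raynaud's theorem, not something you need to re-derive. The genuine gap is the upper bound $\operatorname{mult}_{\mathscr O_C}\Theta_E\le h^0(L)+h^1(F)$, which you yourself flag as ``the main obstacle'' and never carry out. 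Saying that after row and column reduction the order of $\det\Psi$ ``should split'' as $h^0(L)+h^1(F)$, and that genericity of $e$ ``must be used'' to rule out extra vanishing, is a restatement of the problem rather than a proof: the whole difficulty is to exhibit the precise algebraic condition on the pair $(\alpha,e)$ that forbids spurious higher-order vanishing, and then to check that a general $e$ satisfies it.

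The paper pins this down with a second-order lifting criterion instead of a determinant expansion. First (Lemma \ref{lemfirstorder}), for general $\alpha\in H^1(\mathscr O_C)$ the sections of $E$ that lift to first order along $\alpha$ form a subspace of $H^0(L)$, of dimension exactly $h^0(L)+h^1(F)-h^0(E)$; this uses that $L$ and $F$ are line bundles (``first order deformation vector bundles''). Second (Lemma \ref{lemtec}), a diagram chase on the second-order thickening shows that if a nontrivial section of $E$ lifts to second order along a jet extending $\alpha$, then necessarily $(\cup e\, K_{F,\alpha})\cap(\cup\alpha\, H^0(L))\ne 0$ inside $H^1(L)$, where $K_{F,\alpha}=\ker(\cup\alpha\colon H^0(F)\to H^1(F))$. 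Hence, if that intersection is zero, the section counts $h^0(\mathscr E_k)$ on the artinian thickenings stabilize at $h^0(L)+h^1(F)$, and restricting $\Theta_E$ to an arc with this $2$-jet gives the desired upper bound. Genericity of $e$ then enters at exactly one point: fixing a general $\alpha$, one has $\dim K_{F,\alpha}+h^0(L)\le h^1(L)$, so the two subspaces $\cup e\,K_{F,\alpha}$ and $\cup\alpha\,H^0(L)$ of $H^1(L)$ can be made to intersect trivially for general $e$ --- a finite-dimensional transversality check. This statement is the precise form of what your proposal leaves unverified; without identifying it (or an equivalent), your block-matrix expansion cannot be completed.
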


Roughly speaking, this says that if $E\in \mathscr U(2,2(g-1))$ and
is ``general''
 among the vector bundles for which there exists an extension
$$
0\rightarrow L \rightarrow E\otimes \xi\rightarrow F\rightarrow 0
$$
with $h^0(L)+h^1(F)>h^0(E\otimes \xi)$, then
$\textnormal{mult}_\xi \Theta_E=h^0(L)+h^1(F)$.   The notion of ``general'' is discussed in more detail in Section 3.

When $\Theta_E\subseteq \textnormal{Pic}^0(C)$, the tangent cones to the singularities of the theta divisor
 can be related to the geometry of the canonical model of the curve.  To be precise, in the case that $r=1$, $d=0$,
  and $d'=r'(g-1)$, so that $\Theta_E\subseteq \textnormal{Pic}^0(C)$, it follows that
for $\xi\in \textnormal{Pic}^0(C)$,
the tangent cone $\mathbb{P}C_\xi \Theta_E$ is contained in
$\mathbb{P}H^0(C,K_C)^\vee$.
Thus the tangent cone, the canonical model of the curve and the secant varieties to the canonical model are all
 contained in the same space.
In the case that $E$ is a line bundle of degree $g-1$, there is the standard result
(see Arbarello et al. \cite[Theorem VI 1.6 (i)]{acgh})
 showing that the $n$-secant variety, i.e. the union of the $n$-dimensional planes meeting  the canonical model
 in $n+1$ points, is contained in
$\mathbb{P}C_\xi \Theta_E$ if and only if $h^0(E\otimes \xi)\ge n+2$.
In Section \ref{secsec} we give some partial results generalizing this to higher rank vector bundles; see for instance Corollary \ref{corsecant}.

\subsection*{Acknowledgements} 
 The first author would like to thank Mihnea Popa for a number of conversations that led to work on this paper.
The authors would like to thank  S. Ramanan for a discussion concerning the stability of tensor products of vector bundles (his comments appear as Theorem \ref{teoram} and its proof). The authors would also like to thank the referee for many useful comments that have improved the content and exposition of this paper.

\section{Preliminaries}

\subsection{Singularities, tangent cones, and multiplicity}

When studying singularities of divisors, we will use the following basic fact:
\begin{lem}\label{lem11}
Let $D$ be an effective divisor on a smooth variety $X$.  Let $x\in X$ be a point lying on $D$, and suppose
 that $f:S\rightarrow X$ is the inclusion of a smooth curve, such that $f(s)=x$ for some $s\in S$.
 Then $\textnormal{mult}_s (D|_S) \ge \textnormal{mult}_x D$,
 and equality holds if and only if
$f_*T_s S \nsubseteq C_x D$.  It follows that, as a set, the tangent
cone
$$
C_x D=\{\alpha\in T_x X : \exists \textnormal{ a smooth curve } S\subseteq X, \textnormal{ with } $$ $$
T_x S = \alpha, \textnormal{ and }
 \textnormal{mult}_x(D|_S)> \textnormal{mult}_x D
\}
$$
\empty
    \end{lem}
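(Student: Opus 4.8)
The plan is to reduce everything to a local computation in coordinates, where the multiplicity of $D$ is read off as the order of vanishing of a local defining equation and the tangent cone is cut out by its leading form. First I would choose a regular system of parameters $x_1,\dots,x_n$ for the local ring $\mathcal{O}_{X,x}$ (here $n=\dim X$), so that their classes give linear coordinates on $T_xX$ and a local equation $g\in\mathcal{O}_{X,x}$ for $D$ can be expanded as a sum of homogeneous forms $g=g_m+g_{m+1}+\cdots$ in the $x_i$, with $g_m\ne 0$. By definition $\textnormal{mult}_xD=m$ is the largest integer with $g\in\mathfrak{m}_x^m$, and the tangent cone is $C_xD=\{g_m=0\}\subseteq T_xX$; since $D$ is a divisor this is the hypersurface cut out by the single leading form $g_m$.

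Next I would pull back to the curve. Let $t$ be a uniformizer of $\mathcal{O}_{S,s}$ and write $f^\ast x_i=a_it+O(t^2)$; then $\alpha:=f_\ast T_sS$ is the tangent direction spanned by $(a_1,\dots,a_n)\in T_xX$, and $D|_S$ is defined near $s$ by $f^\ast g$, so that $\textnormal{mult}_s(D|_S)=\textnormal{ord}_s(f^\ast g)$. The key computation is to extract the leading coefficient in $t$: since $g_m$ is homogeneous of degree $m$, substitution gives $g_m(f^\ast x_1,\dots,f^\ast x_n)=g_m(a_1,\dots,a_n)\,t^m+O(t^{m+1})$, while every higher form $g_{m+j}$ contributes only terms of order $\ge m+j$. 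Hence
$$
f^\ast g=g_m(a_1,\dots,a_n)\,t^m+O(t^{m+1}).
$$
The only point requiring care is that the corrections $O(t^2)$ in each $f^\ast x_i$, together with all the forms $g_{m+j}$ for $j\ge 1$, are genuinely of order $>m$ in $t$; this is immediate from homogeneity, so there is no serious obstacle beyond careful bookkeeping.

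From this expansion both assertions follow at once. If $\alpha\notin C_xD$, i.e. $g_m(a_1,\dots,a_n)\ne 0$, then $\textnormal{ord}_s(f^\ast g)=m$, so $\textnormal{mult}_s(D|_S)=\textnormal{mult}_xD$; if $\alpha\in C_xD$, the $t^m$ term vanishes and $\textnormal{mult}_s(D|_S)>\textnormal{mult}_xD$ (this also covers the degenerate case $S\subseteq D$, where $f^\ast g\equiv 0$ and in particular $g_m(a)=0$). This yields both the inequality $\textnormal{mult}_s(D|_S)\ge\textnormal{mult}_xD$ and the stated criterion $f_\ast T_sS\nsubseteq C_xD$ for equality, after noting that $g_m(\lambda a)=\lambda^m g_m(a)$, so a line lies in $C_xD$ precisely when $g_m(a)=0$.

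Finally, for the set-theoretic description of $C_xD$: given any $\alpha\in C_xD$ I would produce a witnessing curve by taking the linear arc $t\mapsto(a_1t,\dots,a_nt)$ in the chosen coordinates, whose tangent direction is $\alpha$; by the equality criterion just established, $\textnormal{mult}_x(D|_S)>\textnormal{mult}_xD$. Conversely, any smooth curve $S$ with $T_xS=\alpha$ and $\textnormal{mult}_x(D|_S)>\textnormal{mult}_xD$ forces $\alpha\in C_xD$ by the same criterion. This proves the displayed equality of sets.
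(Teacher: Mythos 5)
Your proof is correct, but it is worth noting how it relates to the paper: the authors do not actually write out an argument at all---they declare the lemma ``straightforward'' and cite Fulton, \emph{Intersection Theory}, Corollary 12.4, which is a more general statement (it bounds the intersection multiplicity $i(x, D\cdot C)$ below by $\textnormal{mult}_x D\cdot \textnormal{mult}_x C$ for a possibly \emph{singular} curve $C\not\subseteq D$, with an equality criterion in terms of the tangent cones). Your local computation with initial forms is precisely the elementary argument being alluded to, and it buys self-containedness at the price of generality (smooth $S$ only, which is all the lemma needs). Three points in your write-up deserve slightly more careful phrasing, though none is a genuine gap: (i) the expansion $g=g_m+g_{m+1}+\cdots$ into homogeneous forms in a regular system of parameters takes place in the completion $\widehat{\mathcal O}_{X,x}\cong \mathbb C[[x_1,\dots,x_n]]$, which is harmless since $\textnormal{ord}_s$ is computed in $\widehat{\mathcal O}_{S,s}\cong\mathbb C[[t]]$; (ii) the identification $C_xD=\{g_m=0\}$ uses the standard fact that the initial ideal of the principal ideal $(g)$ is generated by the initial form of $g$, which holds because the associated graded ring of a regular local ring is a domain, so $\operatorname{in}(hg)=\operatorname{in}(h)\operatorname{in}(g)$; and (iii) since the $x_i$ give only an \'etale chart rather than an isomorphism onto affine space, your ``linear arc'' through $\alpha$ should be realized as the smooth curve germ $V(x_2,\dots,x_n)$ after adapting the parameters so that $\alpha=\partial/\partial x_1$. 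With these glosses, your argument, including the treatment of the degenerate case $S\subseteq D$ via $f^\ast g\equiv 0$, is complete.
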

\begin{proof}
This is straightforward to prove, but for a reference, and  a
generalization, see Fulton \cite{f} Corollary 12.4.
\end{proof}

For higher codimension subvarieties, there is the standard method of Kempf \cite{kt}, outlined in Arbarello
 et al. \cite{acgh}, which is paraphrased in the lemma below in the form we will be using it:

\begin{lem}[Kempf \cite{kt}, Arbarello et al.\cite{acgh}]\label{lemk}
Let $f:X\rightarrow Y$ be a proper morphism of smooth varieties  with image $Z$.  Let $z\in Z$, and denote by $X_z$
the scheme theoretic fiber $f^{-1}(z)$.  Let $N=N_{X_z/X}$ be the normal cone.
Suppose:
\begin{enumerate}
\item $f:X\rightarrow Z$ is birational.
\item $X_z$ is smooth.
\item There exist vector spaces $A$ and $B$, of dimensions $a$ and $b$ respectively, a linear map
$$\phi:A\otimes B\rightarrow (T_zY)^\vee,$$
and an integer $1\le w\le a$
 such that
\begin{enumerate}
\item $N=\{(W,\alpha)\in G(w,A)\times T_z Y:\alpha \perp \phi(W\otimes B)\}$,

\item With this description of $N$, $df:N\rightarrow T_zY$ is given by projection to the second factor.

\item For each $W\in G(w,A)$, the induced map
$$\phi_W:W\otimes B\rightarrow (T_zY)^\vee$$
 is injective.
\end{enumerate}
\end{enumerate}
Then:

\begin{enumerate}
\item $C_zZ$ is Cohen-Macaulay, reduced, and normal.
\item Taking bases $x_1,\ldots,x_a$ and $y_1,\ldots,y_b$ for $A$ and $B$ respectively,
 the ideal of $C_zZ$ is generated by the $(a-w+1)\times (a-w+1)$ minors of the matrix
$$
M=[\phi(x_i\otimes y_j)]_{\stackrel{i=1,\ldots,a}{j=1,\ldots,b}}.
$$
\item The multiplicity of $Z$ at $z$ is given by
$$
\textnormal{mult}_zZ = \prod_{h=0}^{w-1}\frac{(b+h)!h!}{(a-w+h)!(b-a+w+h)!}.
$$
\end{enumerate}

    \end{lem}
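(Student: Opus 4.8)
The plan is to reduce the statement to the classical theory of generic determinantal varieties, using hypotheses (1) and (2) to transfer information from the resolution $N$ to the tangent cone $C_zZ$. First I would translate the description (3a) of $N$ into determinantal language. For $\alpha \in T_zY$, pairing $\phi$ with $\alpha$ produces a bilinear form on $A\otimes B$, equivalently a linear map $M(\alpha):A\to B^\vee$ whose matrix in the chosen bases $x_i,y_j$ is $[\langle \phi(x_i\otimes y_j),\alpha\rangle]$. The defining condition $\alpha\perp\phi(W\otimes B)$ in (3a) says precisely that $W\subseteq \ker M(\alpha)$, so a point $(W,\alpha)$ lies in $N$ if and only if $W$ is a $w$-dimensional subspace of $\ker M(\alpha)$. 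By (3b), $df$ is projection to $T_zY$, so its image is
$$
Z' := \{\alpha\in T_zY : \dim\ker M(\alpha)\ge w\} = \{\alpha : \operatorname{rank} M(\alpha)\le a-w\},
$$
which is exactly the locus cut out by the $(a-w+1)\times(a-w+1)$ minors of $M$. This already yields the determinantal description in conclusion (2), pending the identification $C_zZ=Z'$.

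Next I would recognize $N\to Z'$ as a Kempf collapsing. Viewing $N\subseteq G(w,A)\times T_zY$ as the total space of the subbundle of the trivial bundle with fibre $T_zY$ whose fibre over $W$ is $\phi(W\otimes B)^\perp$, the projection $N\to Z'$ is the standard resolution of the determinantal variety. Condition (3c), that each $\phi_W$ is injective, guarantees that this subbundle has constant rank $\dim T_zY - wb$, so $N$ is smooth of the expected dimension and the collapsing is an isomorphism over the open stratum where $\operatorname{rank} M(\alpha)=a-w$ exactly, there $W=\ker M(\alpha)$ being unique. Consequently $N\to Z'$ is birational, $Z'$ has the expected codimension $w(b-a+w)$, and the classical results on generic determinantal varieties (via the Eagon-Northcott complex, or Kempf's analysis of the collapsing) show that $Z'$ is reduced, Cohen-Macaulay, and normal.

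The heart of the argument, and the step I expect to be the main obstacle, is to identify the \emph{scheme} $C_zZ$ with $Z'$. Here I would use hypotheses (1) and (2): since $f:X\to Z$ is birational and $X_z$ is smooth, comparing the blow-up of $X$ along $X_z$ with the blow-up of $Y$ (hence of $Z$) at $z$ shows that the map induced by $f$ on exceptional divisors is $\mathbb{P}(df):\mathbb{P}N\to\mathbb{P}(T_zY)$ and that its image is $\mathbb{P}C_zZ$. Combined with the previous step this gives $C_zZ=Z'$ set-theoretically; the delicate point is to promote this to an equality of schemes, i.e.\ to show the tangent cone carries no embedded or nonreduced structure. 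I would argue this by a degree comparison: the collapsing $\mathbb{P}N\to\mathbb{P}Z'$ being birational forces $\deg\mathbb{P}C_zZ=\deg\mathbb{P}Z'$, and since $Z'$ is reduced and irreducible of the same dimension, the cycle-theoretic tangent cone is $Z'$ with multiplicity one; reducedness of $C_zZ$ then follows because $Z'$ is already known to be reduced and normal.

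Finally, conclusion (3) is a degree computation: $\operatorname{mult}_zZ=\deg\mathbb{P}C_zZ=\deg\mathbb{P}Z'$, and the degree of the generic determinantal variety of $a\times b$ matrices of linear forms of corank at least $w$ is given by Giambelli's formula, which evaluates to the stated product $\prod_{h=0}^{w-1}\frac{(b+h)!\,h!}{(a-w+h)!\,(b-a+w+h)!}$. The only genuinely new work is the bookkeeping to match hypotheses (3a)--(3c) with the standard normal form; the algebraic properties in (1) and the degree in (3) are then imported wholesale from the determinantal literature.
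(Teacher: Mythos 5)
Your skeleton is the same as that of the source the paper cites for this lemma (Kempf's method as presented in Arbarello et al., Ch.~II together with the Lemma on p.~242): translate (3a) into the determinantal locus $Z'=\{\alpha : \operatorname{rank}M(\alpha)\le a-w\}$, resolve it by the Grassmannian bundle $N$, compare blow-ups, and get the degree from Giambelli. Several of those steps are fine; in particular your birationality claim for $N\to Z'$ is correct and can even be justified more cleanly than you do: hypothesis (1) gives $\dim Z'=\dim C_zZ=\dim Z=\dim X=\dim N$, and since every fibre of $N\to Z'$ is a Grassmannian $G(w,\ker M(\alpha))$, generic finiteness forces the generic fibre to be a single point. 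The genuine gap is in your third paragraph, where you pass from cycles to schemes. Your degree comparison shows only that the cycle $[C_zZ]$ equals $[Z']$ with multiplicity one, i.e.\ that $C_zZ$ is \emph{generically} reduced; the inference ``reducedness of $C_zZ$ then follows because $Z'$ is already known to be reduced and normal'' is a non sequitur, because $C_zZ=\operatorname{Spec}\operatorname{gr}_{\mathfrak m_z}\mathcal O_{Z,z}$ could still have embedded components supported on a proper closed subset of $Z'$ (tangent cones do exhibit embedded primes in general), and conclusions (1) and (2) are precisely the assertions that this does not happen and that $I(C_zZ)$ is the ideal of minors. No argument that sees only supports and degrees can detect embedded primes.

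What is missing is the reverse scheme-theoretic inclusion $C_zZ\subseteq Z'_{\mathrm{sch}}$, where $Z'_{\mathrm{sch}}$ denotes the scheme cut out by the $(a-w+1)\times(a-w+1)$ minors of $M$. In Kempf's and ACGH's argument---and in the paper's application, where $W^k_{r,d}$ is by construction a degeneracy locus of a map $\gamma:K^0\to K^1$---this comes from a datum your argument never uses: near $z$, the variety $Z$ lies in the degeneracy locus of a matrix of holomorphic functions vanishing at $z$ whose linear parts are the $\phi(x_i\otimes y_j)$, so each minor of $M$ is the lowest-degree homogeneous part of an element of $I(Z)$ and hence lies in the ideal of leading forms; this gives $I(Z'_{\mathrm{sch}})\subseteq I(C_zZ)$. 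One also needs $Z'_{\mathrm{sch}}$ itself to be integral, Cohen--Macaulay and normal, and here too you are too quick: $Z'_{\mathrm{sch}}$ is not the generic determinantal variety but a \emph{linear section} of it, and such sections of the expected codimension need not be reduced in general---the determinant of $\left(\begin{smallmatrix}x&0\\ y&x\end{smallmatrix}\right)$ is $x^2$---and it is exactly hypothesis (3c) that rules this out. The correct chain is: the codimension is the expected one $w(b-a+w)$ (computed from $\dim N$), so Hochster--Eagon gives Cohen--Macaulayness and Eagon--Northcott gives the Giambelli degree; (3c) gives smoothness of $Z'_{\mathrm{sch}}$ at points with $\operatorname{rank}M(\alpha)=a-w$ exactly and a singular locus of codimension $\ge 2$, whence reducedness (CM plus generically reduced) and normality (Serre's criterion). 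Once $Z'_{\mathrm{sch}}$ is integral, the inclusion $C_zZ\subseteq Z'_{\mathrm{sch}}$ together with $\dim C_zZ=\dim Z'_{\mathrm{sch}}$ forces equality of schemes, since a proper closed subscheme of an integral scheme has strictly smaller dimension; all three conclusions then follow, with the multiplicity computed either from Eagon--Northcott or from your birational resolution. So your proposal has the right architecture, but it cannot close at exactly the scheme-theoretic point you flag as delicate without the local determinantal model (or, alternatively, Kempf's cohomological collapsing theorem) as an additional input.
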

\begin{proof}
See Arbarello et al. \cite{acgh}, Chapter II Section 1 and Lemma p.242.
\end{proof}

\subsection{Cup products and extensions}
Given vector spaces $A$, $B$, and $C$, and a linear map
$\phi:A\otimes B \rightarrow C,
$
taking duals, and tensoring by $A$, gives $
A\otimes C^\vee \rightarrow A\otimes A^\vee \otimes B^\vee \stackrel{tr\otimes Id_{B^\vee}}{\rightarrow} B^\vee,
$
and thus we have an induced map
$
\psi:A\otimes C^\vee \rightarrow B^\vee.
$

Given vector bundles $E$ and $F$ on $C$, there are cup product maps
$$
H^i(E)\otimes H^j(F)\stackrel{\cup}{\rightarrow} H^{i+j}(E\otimes F)
$$
given locally by multiplication.

In particular, we have a map
$$
H^0(E)\otimes H^0(F^\vee \otimes K_C)\stackrel{\cup}{\rightarrow}
H^0(E\otimes F^\vee \otimes K_C).
$$
Taking duals, and using Serre duality, there is an induced map
$$
H^0(E)\otimes H^1(F\otimes E^\vee)\stackrel{\cup^\vee}{\rightarrow}
 H^1(F),
$$
which we will call the dual cup product.

Recall the standard result:

\begin{lem}\label{lemcupdet}
Let $E$ and $F$ be vector bundles on a smooth curve $C$, and suppose
$$
0\rightarrow F\rightarrow G \rightarrow E \rightarrow 0
$$
is an extension of vector bundles with class $\alpha\in H^1(E^\vee\otimes F)$.
 Then the connecting homomorphism in the long exact sequence in cohomology
$
\delta:H^0(E)\rightarrow H^1(F)
$
is given by the dual cup product with $\alpha$; i.e.
for $s\in H^0(E)$, $\delta(s)=s\cup^\vee \alpha$.

Moreover, let $w\le h^0(E)$ be a positive integer. After choosing bases for $H^0(E)$
and $H^0(F^\vee \otimes K_C)$, let $M$ be the matrix with entries in
$H^0(E\otimes F^\vee \otimes K_C)=H^1(E^\vee \otimes F)^\vee$ representing the cup product.
Then the set
$$
\{\alpha \in H^1(E^\vee \otimes F) : \exists \ W\in G(w,H^0(E)) \textnormal{ s.t. } W\cup^\vee \alpha=0\}
$$
is equal to the set
$$
\{\alpha \in H^1(E^\vee \otimes F) : \operatorname{rank}(M(\alpha))\le h^0(E)-w\}.
$$

    \end{lem}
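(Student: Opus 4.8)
The plan is to treat the two assertions separately. For the first, that $\delta(s) = s\cup^\vee\alpha$, I would compute the connecting homomorphism in \v{C}ech cohomology and then match the result with the dual cup product defined in the preliminary subsection. Choose an open cover $\{U_i\}$ of $C$ on which the sequence splits, and represent the extension class by a \v{C}ech $1$-cocycle $\{\alpha_{ij}\}$ with $\alpha_{ij}\in\operatorname{Hom}(E,F)(U_{ij})$, arising as the difference $\sigma_i-\sigma_j$ of local splittings $\sigma_i\colon E|_{U_i}\to G|_{U_i}$. For $s\in H^0(E)$ the local lifts $\sigma_i(s)$ to $G$ differ on overlaps by $\sigma_i(s)-\sigma_j(s)=\alpha_{ij}(s)\in F(U_{ij})$, so $\delta(s)$ is the class of the cocycle $\{\alpha_{ij}(s)\}$. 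On the other hand this cocycle is exactly the image of the cup product $s\cup\alpha\in H^1(E\otimes E^\vee\otimes F)$ under the evaluation $E\otimes E^\vee\to\mathcal{O}_C$. Thus $\delta(s)$ equals the ``evaluate-after-cup'' element, and it remains to identify this with $s\cup^\vee\alpha$.

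To make that identification I would pair against an arbitrary $t\in H^0(F^\vee\otimes K_C)$ using Serre duality, which is a perfect pairing, and check that both expressions give the same number. By the definition of $\cup^\vee$, the pairing $\langle s\cup^\vee\alpha, t\rangle$ equals the Serre pairing $\langle\alpha, s\cup t\rangle$ of $\alpha$ with $s\cup t\in H^0(E\otimes F^\vee\otimes K_C)$. Unwinding both Serre pairings as cup product followed by the trace $H^1(K_C)\to\mathbb{C}$, each side becomes the image of $s\cup\alpha\cup t\in H^1(E\otimes E^\vee\otimes F\otimes F^\vee\otimes K_C)$ under the two evaluation maps $E\otimes E^\vee\to\mathcal{O}_C$ and $F\otimes F^\vee\to\mathcal{O}_C$; since $s$ and $t$ have cohomological degree zero these two contractions commute and the orderings agree, giving the desired equality.

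For the ``moreover'' part I would interpret $M(\alpha)$ as the matrix of the linear map $\cup^\vee\alpha\colon H^0(E)\to H^1(F)$. With bases $\{s_i\}$ of $H^0(E)$ and $\{t_j\}$ of $H^0(F^\vee\otimes K_C)$, the entry $M_{ij}=s_i\cup t_j\in H^0(E\otimes F^\vee\otimes K_C)=H^1(E^\vee\otimes F)^\vee$ satisfies $M(\alpha)_{ij}=\langle\alpha, s_i\cup t_j\rangle=\langle s_i\cup^\vee\alpha, t_j\rangle$, so the $i$-th row of $M(\alpha)$ records $s_i\cup^\vee\alpha\in H^1(F)$ in the basis dual to $\{t_j\}$. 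Hence $\operatorname{rank} M(\alpha)=\operatorname{rank}(\cup^\vee\alpha)$. Now there exists $W\in G(w,H^0(E))$ with $W\cup^\vee\alpha=0$ precisely when $\ker(\cup^\vee\alpha)$ contains a $w$-dimensional subspace, i.e.\ when $\dim\ker(\cup^\vee\alpha)\ge w$; by rank--nullity this is equivalent to $\operatorname{rank} M(\alpha)\le h^0(E)-w$, which is the vanishing of all $(h^0(E)-w+1)\times(h^0(E)-w+1)$ minors of $M(\alpha)$. This yields the asserted equality of sets.

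The \v{C}ech bookkeeping and the rank--nullity count are routine. The one step genuinely requiring proof rather than unwinding of definitions is the compatibility used in the second paragraph: that the algebraically defined dual cup product $\cup^\vee$ agrees with the geometric cup-then-evaluate map. This rests on the compatibility of Serre duality with cup products, namely that the trace $H^1(K_C)\to\mathbb{C}$ together with the multiplication pairings makes the relevant associativity and commutativity diagram commute; signs are immaterial here since only ranks and vanishing are at issue. I expect this to be the main technical point, and once it is in hand both halves of the lemma follow.
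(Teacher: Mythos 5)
Your argument is essentially correct, but it cannot be matched step-by-step against the paper's proof, because the paper gives none: its entire proof of this lemma is the citation ``See for example Kempf \cite{ka}.'' What you have written out --- the \v{C}ech computation showing that $\delta(s)$ is the class of $\{\alpha_{ij}(s)\}$, the identification of this class with the contraction of $s\cup\alpha$ under $E\otimes E^\vee\to\mathscr O_C$, the Serre-duality verification that contraction-after-cup agrees with the paper's dualization definition of $\cup^\vee$, and the observation that the rows of $M(\alpha)$ are the coordinate vectors of the $s_i\cup^\vee\alpha$ so that $\operatorname{rank}M(\alpha)=\operatorname{rank}(\cup^\vee\alpha)$ --- is precisely the content the paper outsources to Kempf. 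You also correctly isolate the one genuinely non-formal point, namely the compatibility of Serre duality with cup products; with that in hand the rest is bookkeeping, as you say. As a self-contained substitute for the citation, this is the right argument.

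There is, however, one discrepancy you passed over silently, and it should be flagged. Your rank--nullity count shows that a $w$-dimensional $W$ with $W\cup^\vee\alpha=0$ exists if and only if $\operatorname{rank}M(\alpha)\le h^0(E)-w$, equivalently $\operatorname{rank}M(\alpha)<h^0(E)-w+1$, i.e.\ the vanishing of the $(h^0(E)-w+1)\times(h^0(E)-w+1)$ minors. The lemma as printed asserts instead $\operatorname{rank}M(\alpha)<h^0(E)-w$, which by the same count corresponds to a kernel of dimension at least $w+1$; the two sets differ whenever $\ker(\cup^\vee\alpha)$ has dimension exactly $w$. So the equality you prove is not literally the equality asserted, and your closing sentence claiming it ``yields the asserted equality of sets'' is wrong as stated. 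The slip is in the paper's statement, not in your algebra: the convention used everywhere else in the paper --- in Lemma \ref{lemk}, where $W\in G(w,A)$ corresponds to vanishing of $(a-w+1)\times(a-w+1)$ minors, and in Theorem \ref{kempfteo}, where $W\in G(k+1,H^0(E))$ corresponds to $(h^0(E)-k)\times(h^0(E)-k)$ minors --- agrees with your inequality $\operatorname{rank}M(\alpha)\le h^0(E)-w$. A complete answer should note this off-by-one error in the statement and prove the corrected version explicitly, rather than assert agreement with the printed one.
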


\begin{proof}
See for example  Kempf \cite{ka}.
\end{proof}

\subsection{Families of vector bundles with sections} A family of vector bundles of rank $r$ and degree $d$
over a smooth curve $C$, parametrized by a scheme $S$, is a vector bundle $\mathscr{E}$ over $C\times S$
 such that, for all closed points $s\in S$, the restriction of $\mathscr{E}$ to $C\times \{s\}$,
 denoted $\mathscr{E}_s$, is a rank $r$, degree $d$ vector bundle over $C\times \{s\}$.
 For simplicity, we may sometimes use the notation ``family of vector bundles'' instead, when the rank,
  degree, and parameter space are clear.
Also, since $\chi(\mathscr{E}_s)$ is constant, we will often use $\chi$ to refer to this invariant.

\begin{lem}
Let $\mathscr{E}$ be a family of vector bundles over a smooth curve $C$, parameterized by a scheme $S$.
Then there is an exact sequence
$$
0 \rightarrow \pi_{2*}\mathscr{E} \rightarrow K^0 \stackrel{\gamma}{\rightarrow} K^1
\rightarrow R^1\pi_{2*}\mathscr{E} \rightarrow 0,
$$
where $\pi_2$ is the second projection $C\times S\rightarrow S$, and $K^0$ and $K^1$ are vector bundles such that
$\operatorname{rank}(K^1)=-\chi+\operatorname{rank}(K^0)$.
    \end{lem}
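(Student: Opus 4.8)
The plan is to realize the two-term complex as the pushforward of a divisorial resolution of $\mathscr{E}$, following the classical construction of Arbarello et al.\ \cite{acgh} in the line-bundle case. Since both the exactness of the displayed sequence and the local freeness of $K^0,K^1$ are local questions on $S$, I would first reduce to the case that $S$ is quasi-compact (say Noetherian). I would then fix once and for all an effective divisor $D$ on $C$, supported on $\deg D$ points, of degree large enough that $H^1(C,\mathscr{E}_s\otimes\mathscr{O}_C(D))=0$ for every $s\in S$; since $\pi_1^*\mathscr{O}_C(D)$ is relatively ample for $\pi_2$, relative Serre vanishing guarantees that a single such $D$ works uniformly over the quasi-compact base. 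Writing $\pi_1,\pi_2$ for the two projections from $C\times S$ and $\widetilde D=\pi_1^{-1}(D)=D\times S$, I set $\mathscr{E}(\widetilde D)=\mathscr{E}\otimes\pi_1^*\mathscr{O}_C(D)$.

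The key step is to push the short exact sequence
$$
0\rightarrow \mathscr{E}\rightarrow \mathscr{E}(\widetilde D)\rightarrow \mathscr{E}(\widetilde D)|_{\widetilde D}\rightarrow 0
$$
forward along $\pi_2$. Because $\widetilde D\rightarrow S$ is finite, $R^1\pi_{2*}$ of the quotient vanishes, and because $D$ was chosen large, $R^1\pi_{2*}\mathscr{E}(\widetilde D)=0$; the long exact sequence of higher direct images therefore collapses to the desired four-term sequence with
$$
K^0=\pi_{2*}\mathscr{E}(\widetilde D),\qquad K^1=\pi_{2*}\bigl(\mathscr{E}(\widetilde D)|_{\widetilde D}\bigr).
$$
Local freeness of $K^1$ is then immediate: $\mathscr{E}(\widetilde D)|_{\widetilde D}$ is locally free of rank $r$ on the finite flat $S$-scheme $\widetilde D$, so its pushforward is a vector bundle of rank $r\deg D$.

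The main obstacle — the one point requiring genuine care rather than formal manipulation — is the local freeness of $K^0=\pi_{2*}\mathscr{E}(\widetilde D)$, especially when $S$ is non-reduced, as in the deformation-theoretic applications where Grauert's theorem does not apply directly. Here I would invoke the full cohomology-and-base-change theorem and its descending induction: since $R^2\pi_{2*}=0$ (the fibers are curves) and $H^1(C,\mathscr{E}_s(D))=0$ for all $s$, the base-change maps in degrees $2$ and $1$ are surjective, which forces $R^1\pi_{2*}\mathscr{E}(\widetilde D)=0$ and then makes the degree-$0$ base-change map an isomorphism with $\pi_{2*}\mathscr{E}(\widetilde D)$ locally free. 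Finally, the rank relation follows from Riemann--Roch on the fibers: $\operatorname{rank}K^0=h^0(C,\mathscr{E}_s(D))=\chi+r\deg D$ while $\operatorname{rank}K^1=r\deg D$, whence $\operatorname{rank}K^1=-\chi+\operatorname{rank}K^0$.
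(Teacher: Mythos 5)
Your proof is correct, and it is essentially the paper's approach: the paper offers no argument of its own, saying only ``This is standard. See for example Arbarello et al.~\cite{acgh},'' and what you have written out — twisting by a sufficiently positive divisor pulled back from $C$, pushing forward the resulting short exact sequence, and using cohomology and base change to get local freeness of $K^0$ and vanishing of $R^1\pi_{2*}\mathscr{E}(\widetilde D)$ — is precisely the standard construction from that reference, adapted to higher rank. No discrepancy to report.
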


\begin{proof}
This is standard; see for example Arbarello et al. \cite{acgh}.
\end{proof}

Given $\mathscr{E}$ a family of vector bundles over a smooth curve $C$, parameterized by a scheme $S$,
 define $B^k(\mathscr{E})$ to be the $(\operatorname{rank}(K_0)-k)$-th degeneracy locus of the map $\gamma$
 given in the lemma.  Neither this map nor the complex are unique, but one can show that the degeneracy locus is
 independent of the choice of such a map and complex (it is the scheme defined by the $(k-\chi) $-th
 Fitting ideal of $R^1\pi_{2*}\mathscr{E}$, cf. Arbarello et al. \cite{acgh} p.179).  Clearly, as a set
$$
B^k(\mathscr{E})=\{s\in S: h^0(\mathscr{E}_s)\ge k\}.
$$
Given the description of $B^k(\mathscr{E})$ as a degeneracy locus, it
has an expected dimension we will call $\rho(g,k,\mathscr{E})$,
which is given by the formula
$$
\rho(g,k,\mathscr{E})=\dim(S)-k(-\chi+k).
$$

\begin{exa} Let $C$ be a smooth curve of genus $g$, let $d\in \mathbb{Z}$, let $S=\textnormal{Pic}^d(C)$,
and let $\mathscr{E} =\mathscr{L}_d$ be a Poincar\'e line bundle on $C\times \textnormal{Pic}^d(C)$, i.e.
 $\mathscr{L}_{[L]}\cong L$.  Then $B^k(\mathscr{L}_d)=B^k_d$, and
$$
\rho(g,k,\mathscr{L})=g-k(g-1-d+k),
$$
the familiar Brill-Noether number $\rho(g,k,d)$ (in the case of line
bundles, $r$ is usually used in place of $k-1$).
    \end{exa}

\subsection{Families of vector bundles over moduli spaces}\label{secfammod}
Given $\mathscr{E}$ a family of vector bundles over $C$
parameterized by a scheme $S$, the results of the previous section
allow us to give a ``good'' scheme structure on the set of points
$s\in S$ such that $h^0(\mathscr{E}_s)\ge k$, namely the scheme
$B^k(\mathscr{E})$.

Although there is no ``universal family'' of vector bundles over $C\times \mathscr{U}(r,d)$, i.e.
 a vector bundle $\mathscr{E}$ such that $\mathscr{E}_{C\times [E]}\cong E$, locally such a vector bundle exists,
  and this allows us to define a scheme structure on the set
$\{[E]\in \mathscr{U}(r,d):h^0(E)\ge k\}$, which we call
$B^k_{\mathscr{U}(r,d)}$. To be precise, for each $E\in \mathscr{U}$
there is an open affine neighborhood $U$ containing $E$, an \'etale
morphism $f:S\rightarrow U$, and a vector bundle $\mathscr{E}_S$
over $C\times S$ such that for each $s\in S$, $\mathscr{E}_s$ is
isomorphic to the vector bundle parameterized by $f(s)$.  This
allows us to define the scheme structure locally, and since the
construction is functorial, it defines a global scheme structure.

\subsection{Infinitesimal deformations of vector bundles}
Consider the artinian arc $S_a =\textnormal{Spec } \mathbb C [t]/(t^{a+1})$.
For $\ell < a$ the exact sequence
$$
0\rightarrow (t^{\ell+1}) \rightarrow \mathbb  C[t]/(t^{a+1})\rightarrow \mathbb C [t]/(t^{\ell+1})\rightarrow 0
$$
induces inclusions $S_\ell \rightarrow S_a$ (as closed subschemes), where $S_0$ corresponds to the unique closed point
 of $S_a$.

Given a vector bundle $E$ on a curve $C$, an $N$-th order deformation of $E$ is a vector bundle
$\mathscr{E}_N$ on $C\times S_N$ such that $\mathscr{E}_0\cong E$, where $\mathscr{E}_0$ is defined as the pullback of
$\mathscr{E}_N$ over the map
$C\times S_0\rightarrow C\times S_N$.
In other words, it is a family of vector bundles over $C\times S_N$ such that the fiber over the closed point of $S_N$
 is isomorphic to  $E$.

It is easy to check that the sheaves of sections fit into exact sequences
\begin{equation}\label{eqndef}
0 \rightarrow \mathscr{E}_a \stackrel{t^{b+1}}{\rightarrow} \mathscr{E}_{a+b+1} \rightarrow \mathscr{E}_b \rightarrow 0.
\end{equation}
In particular, given a first order deformation $\mathscr E $ of $E$, there is a corresponding extension:
\begin{equation}\label{eqnext1}
0\rightarrow E\rightarrow \mathscr{E} \rightarrow E \rightarrow 0.
\end{equation}
We say that a section $s\in H^0(E)$ lifts to first order (as a section of $\mathscr{E}$) if it is in the image of
the induced map
$$
H^0(\mathscr{E})\rightarrow H^0(E).
$$
We say that a subspace $W\subseteq H^0(E)$ lifts to first order if it is contained in the image of this map.
 More generally, we say that a section, or subspace of $ H^0(\mathscr{E}_b)$ lifts to order $a+b+1$
 if it is in the image of the map induced from the exact sequence \eqref{eqndef} above.

\begin{lem}\label{lem16}
Let $\mathscr{E}$ be a first order deformation of a vector bundle $E$ on a smooth curve $C$, corresponding to
$\alpha \in H^1(E^\vee \otimes E)$.
Then a section $s\in H^0(E)$ lifts to first order as a section of $\mathscr{E}$ if and only if
$$
s\cup^\vee \alpha = 0\in H^1(E).
$$
    \end{lem}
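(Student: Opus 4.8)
The plan is to reduce the statement to the preceding lemma on connecting homomorphisms by recognizing the lifting condition as the vanishing of a connecting map. First I would record that, by construction, the first order deformation $\mathscr{E}$ sits in the extension \eqref{eqnext1},
$$
0\rightarrow E\rightarrow \mathscr{E} \rightarrow E \rightarrow 0,
$$
which is the special case $a=b=0$ of \eqref{eqndef}. The key normalization is that the extension class of \eqref{eqnext1} in $\operatorname{Ext}^1(E,E)\cong H^1(E^\vee\otimes E)$ is precisely the deformation class $\alpha$ of $\mathscr{E}$; this is the standard identification of first order deformations of $E$ with $H^1(E^\vee\otimes E)$.

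Next I would pass to the long exact sequence in cohomology associated to \eqref{eqnext1}, the relevant portion being
$$
H^0(\mathscr{E})\rightarrow H^0(E)\stackrel{\delta}{\rightarrow} H^1(E).
$$
Here the map $H^0(\mathscr{E})\to H^0(E)$ induced by the quotient $\mathscr{E}\to E$ is exactly the one appearing in the definition of lifting to first order. Thus, by exactness, a section $s\in H^0(E)$ lifts to first order as a section of $\mathscr{E}$ if and only if $s$ lies in the image of $H^0(\mathscr{E})\to H^0(E)$, equivalently if and only if $s\in\ker\delta$, i.e. $\delta(s)=0$.

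Finally I would invoke the earlier lemma on extensions, applied to \eqref{eqnext1} with $F=E$ and extension class $\alpha$, to identify the connecting homomorphism with the dual cup product: $\delta(s)=s\cup^\vee\alpha$ for every $s\in H^0(E)$. Combining this with the previous step yields that $s$ lifts to first order if and only if $s\cup^\vee\alpha=0$ in $H^1(E)$, which is the claim.

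Given the machinery already in place, the argument is essentially immediate, and no genuine obstacle arises. The one point deserving care is the first step: verifying that the extension class of \eqref{eqnext1} really is the deformation class $\alpha$, and checking that the sign and Serre-duality conventions used to define $\cup^\vee$ are compatible so that one obtains $\alpha$ rather than $-\alpha$. Since the conclusion is the vanishing $s\cup^\vee\alpha=0$, any such sign ambiguity is in any case immaterial.
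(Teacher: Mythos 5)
Your proof is correct and follows exactly the paper's own argument: pass to the long exact sequence of the extension \eqref{eqnext1}, identify lifting with lying in the kernel of the connecting homomorphism, and apply the earlier lemma identifying that connecting map with the dual cup product against the extension class. Your extra remark that the extension class of \eqref{eqnext1} equals the deformation class $\alpha$ is a worthwhile point of care, but it is the same standard identification the paper implicitly uses.
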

\begin{proof}
This follows from the exact sequence (\ref{eqnext1}) above, since we have seen that the coboundary map is given by the dual cup product with the extension class.
\end{proof}

\subsection{Coherent systems} In this section we review certain facts about coherent systems which we will need in what follows.  We refer the reader to Bradlow et al. \cite{n}, King-Newstead \cite{kn}, He \cite{he}, Le Potier \cite{lepot} and Raghavendra-Vishwanath \cite{ravi} for more details.  Roughly, a coherent system of type $(k,d,r)$ is a pair $(E,V)$ with $E$ a vector bundle of rank $r$ and degree $d$ and $V\in G(k,H^0(E))$.  To be more precise, one should consider triples  $(E,\mathbb{V},\phi)$ such that $E$ is a vector bundle of rank $r$ and degree $d$,  $\mathbb{V}$ is an $k$-dimensional vector space, and $\phi:\mathbb{V}\otimes \mathscr{O}_C\rightarrow E$ is a morphism of coherent sheaves such that the induced map on global sections is an inclusion.  $V$ would then be the image of $H^0(\phi)$.  We will write coherent systems as pairs $(E,V)$ for simplicity.

Given $\alpha\in \mathbb{R}$, we define the $\alpha$-slope of a coherent system as $$ \mu_\alpha(E,V)=\frac{d}{r}+\alpha \frac{k}{r}. $$ We say that a coherent system $(E,V)$ is $\alpha$-stable  if $\mu_\alpha(E',V')<\mu_\alpha(E,V)$ for all proper subcoherent systems $(E',V')\subset (E,V)$.  We say that the coherent system is semi-stable if the same condition holds with $<$ replaced by $\le$.  One can define the notion of a family of $\alpha$-(semi)stable coherent systems, parametrized by a scheme $S$, and a notion of morphism of $\alpha$-(semi)stable coherent systems in such a way that for all $\alpha$, there exist (possibly empty)  coarse moduli spaces of  $\alpha$-stable coherent systems (King-Newstead \cite{kn}).  We will call these spaces $\bar{G}^k_{\alpha,\mathscr{U}}$.

When $\alpha=0$ and $r>0$, it
is clear that there are no strictly $\alpha$-stable coherent
systems. On the other hand, there exists an $\alpha_1>0$ such that
for all $0<\alpha<\alpha_1$, $E$ stable implies that  $(E,V)$ is
$\alpha$-stable for all $V\in G(k,H^0(E))$ (e.g. \cite[Proposition
2.5]{n}, \cite{kn}, \cite{dask}).   The moduli spaces $\bar{G}^k_{\alpha,\mathscr U}$ are
isomorphic for all $0<\alpha<\alpha_1$, and we will call this space
$\bar{G}^k_{\mathscr U}$ (cf. \cite[pp.689-90]{n}).

On the other hand, for $0<\alpha<\alpha_1$, it is known that $(E,V)$ $\alpha$-stable implies that $E$ is semi-stable (e.g. \cite[Proposition 2.5]{n}, \cite{kn},\cite{dask}).  Consequently there is a
natural map
$$a:\bar{G}^k_{\mathscr{U}}\rightarrow\overline{\mathscr{U}}$$
given by forgetting the vector space,  where
$\overline{\mathscr{U}}$ is the moduli space of semi-stable vector
bundles of rank $r$ and degree $d$.   We will also denote by $a:G^k_{\mathscr U}\to \mathscr U(r,d)$ the restriction of this forgetful map, where
as a set $$G^k_{\mathscr U}
=\{(E,V):E\in B^k_{r,d}, \  V\in G(k,H^0(E))\}.$$

Given two coherent
systems $(E,V)$ and $(E',V')$, there is an exact sequence
(\cite[Corollaire 1.6]{he}, cf. \cite[Section 3]{n}) $$ 0\rightarrow
Hom((E',V'),(E,V)) \rightarrow Hom(E',E)$$ $$ \rightarrow
Hom(V',H^0(E)/V) \rightarrow Ext^1((E',V'),(E,V)) \rightarrow
Ext^1(E',E)  $$ $$\rightarrow Hom(V',H^1(E)) \rightarrow
Ext^2((E',V'),(E,V)) \rightarrow 0.$$  Thus if $(E',V')=(E,V)$ and $E$
is stable, there is an exact sequence $$ 0\rightarrow
Hom(V,H^0(E)/V) \rightarrow Ext^1((E,V),(E,V)) \rightarrow
T_E\mathscr{U}.$$ 
It is a fact \cite[Th\'eor\`eme 3.12]{he}  that
if $(E,V )$ is $\alpha$-stable, then
$$T_{(E,V)}G^k_{\alpha,\mathscr{U}}=Ext^1((E,V),(E,V))$$ 
and in
this case we get   $$ 0\rightarrow Hom(V,H^0(E)/V) \rightarrow
T_{(E,V)}G^k_{\alpha,\mathscr{U}} \stackrel{a_*}{\to} T_E\mathscr{U}.
$$  This can be interpreted in the following way, which is now
standard and therefore we omit the proof; see for example
\cite[Th\'eor\`eme 3.12]{he} and \cite[\S 3 and Proposition
3.10]{n}.

 \begin{pro}\label{protan} Let $E\in B^k_{r,d}$. \begin{enumerate} \item $\operatorname{image}(a_*)=(\operatorname{image} (\mu_V))^\perp$, where $$ \mu_V:V\otimes H^0(E^\vee \otimes K_C)\rightarrow  H^0(E\otimes E^\vee \otimes K_C)  $$ is the restriction of the cup product to $V$. \item $G^k_{\mathscr{U}}$ is smooth and of dimension $\rho(g,k,\mathscr{U})$ at $(E,V)$ if and only if  $\mu_V$ is injective.  \end{enumerate} \end{pro}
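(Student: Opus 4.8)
The plan is to deduce both statements from the given exact sequence
$$0\rightarrow \operatorname{Hom}(V,H^0(E)/V)\rightarrow T_{(E,V)}G^k_{r,d}\stackrel{a_*}{\rightarrow} T_E\mathscr U(r,d),$$
the only missing ingredient being an explicit description of $\operatorname{image}(a_*)$. The key observation is that a first order deformation of the pair $(E,V)$ is exactly a first order deformation $\mathscr E$ of $E$ together with a lift of $V$ to a free rank $k+1$ submodule of $H^0(\mathscr E)$, and that $a_*$ remembers only the deformation $\mathscr E$, i.e. its class $\alpha\in H^1(E^\vee\otimes E)=T_E\mathscr U(r,d)$. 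Consequently $\alpha\in\operatorname{image}(a_*)$ if and only if $V$ lifts to first order as a space of sections of the deformation with class $\alpha$.

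First I would apply the lifting lemma for infinitesimal deformations: $V$ lifts to first order if and only if $s\cup^\vee\alpha=0$ in $H^1(E)$ for every $s\in V$. To convert this into the perpendicularity condition, I would invoke the construction sending a map $\phi$ to its induced map $\psi$ recalled in the preliminaries, together with Serre duality, which identifies $\cup^\vee$ as the map $\psi$ associated to the cup product $\mu$. Unwinding the trace gives the pairing identity $\langle s\cup^\vee\alpha,\tau\rangle=\langle \mu(s\otimes\tau),\alpha\rangle$ for $\tau\in H^0(E^\vee\otimes K_C)$, where the brackets denote the Serre duality pairings on $H^1(E)$ and on $H^1(E^\vee\otimes E)$. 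Since the Serre pairing is nondegenerate, the vanishing $s\cup^\vee\alpha=0$ for all $s\in V$ is equivalent to $\alpha\perp\mu(V\otimes H^0(E^\vee\otimes K_C))=\operatorname{image}(\mu_V)$. This yields $\operatorname{image}(a_*)=(\operatorname{image}\mu_V)^\perp$, proving (1).

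For (2) I would run the dimension count forced by (1). From the exact sequence,
$$\dim T_{(E,V)}G^k_{r,d}=(k+1)(h^0(E)-(k+1))+\big(\dim T_E\mathscr U(r,d)-\operatorname{rank}\mu_V\big),$$
using $\dim(\operatorname{image}\mu_V)^\perp=\dim T_E\mathscr U(r,d)-\operatorname{rank}\mu_V$. The domain of $\mu_V$ has dimension $(k+1)h^1(E)$ by Serre duality, so $\mu_V$ is injective exactly when $\operatorname{rank}\mu_V=(k+1)h^1(E)$; substituting this value and using $h^0(E)-h^1(E)=\chi$ together with $\dim T_E\mathscr U(r,d)=r^2(g-1)+1$ collapses the right hand side to $\rho(g,k,\mathscr U)$. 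On the other hand, $G^k_{r,d}$ is cut out as a degeneracy locus in a Grassmann bundle over $\mathscr U(r,d)$ as recalled above, and the rank identity $\operatorname{rank}(K^1)=-\chi+\operatorname{rank}(K^0)$ shows that its expected dimension there is precisely $\rho(g,k,\mathscr U)$; hence every component through $(E,V)$ satisfies $\dim_{(E,V)}G^k_{r,d}\ge\rho(g,k,\mathscr U)$. Combining these, if $\mu_V$ is injective then $\rho(g,k,\mathscr U)\le\dim_{(E,V)}G^k_{r,d}\le\dim T_{(E,V)}G^k_{r,d}=\rho(g,k,\mathscr U)$, forcing smoothness of dimension $\rho(g,k,\mathscr U)$; conversely, if $\operatorname{rank}\mu_V<(k+1)h^1(E)$ then $\dim T_{(E,V)}G^k_{r,d}>\rho(g,k,\mathscr U)$, so $(E,V)$ cannot be a smooth point of dimension $\rho(g,k,\mathscr U)$.

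I expect the main obstacle to be the identification of $\operatorname{image}(a_*)$ with the first order lifting locus in the first paragraph: this is the one genuinely deformation-theoretic point, requiring that the tangent space to $G^k_{r,d}$ really parameterizes infinitesimal deformations of the pair, and that the vanishing condition defining $G^k_{r,d}$ inside the Grassmann bundle translates into the lifting of $V$ as sections of $\mathscr E$. Once that is in place, the duality identity $\langle s\cup^\vee\alpha,\tau\rangle=\langle\mu(s\otimes\tau),\alpha\rangle$ and the ensuing dimension bookkeeping are routine.
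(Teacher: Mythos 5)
Your proposal is correct and takes essentially the same approach as the paper: the paper's proof likewise identifies the tangent space to $G^k_{r,d}$ at $(E,V)$ (via its coarse moduli interpretation) with first order deformations of the pair, so that $\operatorname{image}(a_*)$ consists of those $\tau\in H^1(E^\vee\otimes E)$ with $V\subseteq \ker(\cup^\vee \tau)$, and then deduces (1) by duality and (2) from (1). You have simply made explicit the ingredients the paper leaves implicit, namely the first order lifting lemma, the Serre duality pairing identity $\langle s\cup^\vee\alpha,\tau\rangle=\langle \mu(s\otimes\tau),\alpha\rangle$, and the determinantal lower bound $\dim_{(E,V)}G^k_{r,d}\ge \rho(g,k,\mathscr U)$ used in the dimension count.
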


\subsection{Preliminaries on theta divisors}
Given $\mathscr{E}$ a family of vector bundles of rank $r$ and
degree $r(g-1)$ over a smooth curve $C$, parametrized by a scheme
$S$, we define $\Theta_S := B^1(\mathscr{E})$;  in other words $\Theta_S$ is the subscheme of
$S$ parametrizing vector bundles with sections.  As pointed out
earlier, there is a complex
$$
0\rightarrow K^0\stackrel{\gamma}{\rightarrow} K^1 \rightarrow 0
$$
of locally free sheaves on $S$ of equal rank, such that $\Theta_S$ is defined as the zero locus of the determinant
 of $\gamma$.  Thus if $\Theta_S\ne S$, then it is a divisor.

It will be important for us to study the case that $S$ is a smooth
curve or an artinian arc $S_l=\textnormal{Spec } \mathbb
C[t]/(t^{l+1})$.    The following is straightforward:
\begin{lem}\label{lem18}
If $S$ is a smooth curve, and $\Theta_S$ is a divisor on $S$, then
$\Theta_S=\sum_{s\in S}\ell((R^1\pi_{2*}\mathscr{E})_{s})\cdot s$.
    \end{lem}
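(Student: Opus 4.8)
The plan is to reduce the statement to the standard fact that for a square matrix over a discrete valuation ring the order of vanishing of the determinant equals the length of the cokernel. Since $S$ is a smooth curve, its divisors are $\mathbb{Z}$-linear combinations of closed points, and by construction $\Theta_S$ is the zero scheme of $\det\gamma$, where $\gamma\colon K^0\to K^1$ is the map of locally free sheaves of equal rank $n$ produced by the earlier lemma; hence as a divisor $\Theta_S=\sum_{s\in S}\operatorname{ord}_s(\det\gamma)\cdot s$. Thus it suffices to prove, for each closed point $s\in S$, that $\operatorname{ord}_s(\det\gamma)=\ell\bigl((R^1\pi_{2*}\mathscr{E})_s\bigr)$.

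First I would localize at $s$. Writing $R=\mathcal{O}_{S,s}$, which is a discrete valuation ring because $S$ is a smooth curve, the map $\gamma$ becomes a homomorphism $\gamma_s\colon R^n\to R^n$ of free $R$-modules of the same rank. The four-term exact sequence of the earlier lemma identifies $\operatorname{coker}(\gamma_s)$ with $(R^1\pi_{2*}\mathscr{E})_s$. Because $\Theta_S$ is a divisor rather than all of $S$, the section $\det\gamma$ is not identically zero, so $\det\gamma_s$ is a nonzero element of the domain $R$; equivalently $\gamma_s$ is injective and $\operatorname{coker}(\gamma_s)$ is a torsion $R$-module, hence of finite length.

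Next I would apply the elementary divisor theorem (Smith normal form) over the DVR $R$: there are $R$-bases of source and target in which $\gamma_s$ is represented by a diagonal matrix $\operatorname{diag}(t^{a_1},\dots,t^{a_n})$, where $t$ is a uniformizer and $a_1,\dots,a_n\ge 0$. On the one hand $\det\gamma_s$ is a unit times $t^{a_1+\cdots+a_n}$, so $\operatorname{ord}_s(\det\gamma)=\sum_i a_i$. On the other hand $\operatorname{coker}(\gamma_s)\cong\bigoplus_i R/(t^{a_i})$ has length $\sum_i \ell\bigl(R/(t^{a_i})\bigr)=\sum_i a_i$. Comparing the two expressions gives $\operatorname{ord}_s(\det\gamma)=\ell\bigl((R^1\pi_{2*}\mathscr{E})_s\bigr)$, as required. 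I would also remark that although the complex $\gamma$ is not unique, both the divisor $\Theta_S$ and the sheaf $R^1\pi_{2*}\mathscr{E}$ are intrinsic, so the identity is independent of all choices.

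There is essentially no hard step here: the content is entirely the DVR computation above, which is classical. The only point requiring a little care is the identification $\operatorname{coker}(\gamma_s)\cong(R^1\pi_{2*}\mathscr{E})_s$ together with the observation that $\det\gamma_s\ne 0$; that is, the hypothesis that $\Theta_S$ is a divisor is precisely what guarantees that $\gamma_s$ is injective, so that the cokernel is a finite-length torsion module and the two length and order computations are both defined and agree.
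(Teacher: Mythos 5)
Your proof is correct and is the standard argument: the paper itself gives no details, simply citing \cite{cmf} Section 1.1, where the content is exactly this reduction to the structure theorem for matrices over a discrete valuation ring (order of vanishing of the determinant equals length of the cokernel), applied to the complex $K^0\stackrel{\gamma}{\to}K^1$ whose cokernel is $R^1\pi_{2*}\mathscr{E}$. So your write-up fills in, correctly and completely, the same proof the paper delegates to its reference.
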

\begin{proof}
See \cite{cmf} Section 1.1.
\end{proof}

We would like a way to calculate
$\ell((R^1\pi_{2*}\mathscr{E})_{s})$.  We will use the notation
$C_l$ to denote $C\times S_l$, and $\mathscr{E}_l$ to denote the
pullback of $\mathscr{E}$ to $C_l$.

\begin{lem}\label{lem19}
If $S$ is a smooth curve, and $\Theta_S$ is a divisor on $S$, then
there exists an integer $N\ge 0$ such that for all $l\ge N$
$$\ell((R^1\pi_{2*}\mathscr{E})_{s})=
\ell (H^0(C_k,\mathscr{E}_l))=h^0(C,\mathscr{E}_l).$$ Moreover, if
for any $l\ge 0$, $H^0(\mathscr{E}_l)=H^0(\mathscr{E}_{l+1})$, then
we may take $N=l$.
    \end{lem}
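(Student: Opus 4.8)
The plan is to work locally at the point $s$ and reduce the whole statement to linear algebra over the discrete valuation ring $R=\mathcal{O}_{S,s}$, whose maximal ideal is generated by a local parameter $t$ and whose residue field is $\mathbb{C}$. First I would restrict the complex $0\to \pi_{2*}\mathscr{E}\to K^0\stackrel{\gamma}{\to}K^1\to R^1\pi_{2*}\mathscr{E}\to 0$ of the earlier lemma to $\operatorname{Spec}R$, so that $\gamma$ becomes a map $\gamma_s\colon K^0_s\to K^1_s$ of free $R$-modules. In the theta setting $\chi=0$, so these modules have the same rank $n$, and because $\Theta_S$ is a divisor (in particular $\Theta_S\neq S$) the determinant $\det\gamma_s$ is a nonzero element of $R$. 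Over a discrete valuation ring I can then choose bases putting $\gamma_s$ in Smith normal form $\operatorname{diag}(t^{a_1},\dots,t^{a_n})$ with $0\le a_1\le\cdots\le a_n<\infty$.

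With this normal form the left-hand quantity is immediate: since $R^1\pi_{2*}\mathscr{E}=\operatorname{coker}\gamma$ and cokernel commutes with localization, $(R^1\pi_{2*}\mathscr{E})_s=\bigoplus_i R/(t^{a_i})$, so $\ell((R^1\pi_{2*}\mathscr{E})_s)=\sum_i a_i$, the valuation of $\det\gamma_s$. For the right-hand quantities I would invoke the base-change property of the complex $K^\bullet$: pulling back along the closed immersion $S_k\hookrightarrow S$ cut out by $\mathfrak{m}_s^{k+1}=(t^{k+1})$ computes the cohomology of $\mathscr{E}_k$, so that $H^0(C_k,\mathscr{E}_k)=\ker\bar\gamma_k$, where $\bar\gamma_k=\gamma_s\otimes_R R/(t^{k+1})$. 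A direct computation of the kernel of multiplication by $t^{a}$ on $\mathbb{C}[t]/(t^{k+1})$ then gives $\dim_{\mathbb{C}}\ker\bar\gamma_k=\sum_i\min(a_i,k+1)$. Finally, since every module over the artinian local ring $\mathbb{C}[t]/(t^{k+1})$ has all composition factors equal to the residue field $\mathbb{C}$, its length equals its $\mathbb{C}$-dimension; this yields $\ell(H^0(C_k,\mathscr{E}_k))=h^0(C,\mathscr{E}_k)=\sum_i\min(a_i,k+1)$, which already disposes of the second equality in the statement.

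Comparing the two sides finishes the argument. Once $k+1\ge a_n$, i.e. for all $k\ge N:=\max(0,a_n-1)$, each $\min(a_i,k+1)$ equals $a_i$, so $h^0(C,\mathscr{E}_k)=\sum_i a_i=\ell((R^1\pi_{2*}\mathscr{E})_s)$. For the moreover statement I would use the difference formula $h^0(C,\mathscr{E}_{k+1})-h^0(C,\mathscr{E}_k)=\#\{i:a_i\ge k+2\}\ge 0$, so that the hypothesis $H^0(\mathscr{E}_k)=H^0(\mathscr{E}_{k+1})$ (equivalently the equality of dimensions, since (\ref{eqndef}) provides an injection $H^0(\mathscr{E}_k)\stackrel{t}{\hookrightarrow}H^0(\mathscr{E}_{k+1})$) forces every $a_i\le k+1$; then already $h^0(C,\mathscr{E}_k)=\sum_i a_i$, and the quantity is constant for all larger indices, so one may take $N=k$.

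The step I expect to carry the real content is the base-change claim that a single complex $K^\bullet$ computes $H^\bullet(\mathscr{E}_k)$ for every thickening $S_k$ simultaneously; this is exactly the universal property of the complex used to construct $K^0,K^1$ (cf. Arbarello et al.\ \cite{acgh}), and it is what lets me replace the a priori cohomological statement with the transparent computation over $R$. The remaining points—the identification of length with $\mathbb{C}$-dimension over the artinian arc, and the two kernel and cokernel computations in Smith normal form—are routine.
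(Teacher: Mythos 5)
Your proof is correct. There is, however, nothing in the paper to compare it against line by line: the paper gives no argument for this lemma at all, deferring entirely to Section 1.2 of Casalaina-Martin--Friedman \cite{cmf}, and your computation is exactly the kind of local argument that reference carries out, so in substance you have reconstructed the outsourced proof rather than found a different one. Two remarks. First, the one step carrying real content is the one you flag: the paper's earlier lemma only asserts the four-term exact sequence $0\to\pi_{2*}\mathscr{E}\to K^0\stackrel{\gamma}{\to}K^1\to R^1\pi_{2*}\mathscr{E}\to 0$ over $S$ itself, whereas you need that the \emph{same} two-term complex computes $H^0$ and $H^1$ after pullback along every thickening $S_k\hookrightarrow S$. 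That is strictly more than the lemma states, but it is part of the standard construction (the complex represents $R\pi_{2*}\mathscr{E}$ universally, and since $K^0,K^1$ are locally free the derived pullback is the ordinary one), so your appeal to the construction in Arbarello et al.\ \cite{acgh} is the correct repair. Second, granting that, the rest is complete and correct as written: nonvanishing of $\det\gamma_s$ because $\Theta_S\ne S$, Smith normal form over the DVR, $\ell\bigl((R^1\pi_{2*}\mathscr{E})_s\bigr)=\sum_i a_i$, $\dim_{\mathbb{C}}\ker\bar\gamma_k=\sum_i\min(a_i,k+1)$, the identification of length with $\mathbb{C}$-dimension over the artinian arc, and the difference formula $h^0(\mathscr{E}_{k+1})-h^0(\mathscr{E}_k)=\#\{i: a_i\ge k+2\}$, which handles the ``moreover'' clause (using the injection $H^0(\mathscr{E}_k)\stackrel{t}{\hookrightarrow}H^0(\mathscr{E}_{k+1})$ from the paper's sequence \eqref{eqndef} to interpret the hypothesis as equality of dimensions).
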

\begin{proof}
See \cite{cmf} Section 1.2.
\end{proof}

We then have the following lemma:

\begin{lem}\label{lemtheta}
Let $\mathscr{E}$ be a family of vector bundles with $\chi=0$ over a
smooth curve $C$, parametrized by a smooth variety $X$, and such
that $\Theta_X$ is a divisor. Then
 $$\textnormal{mult}_x \Theta_X\ge h^0(C,\mathscr{E}_x),$$
  and equality holds if and only if there is a tangent vector $\alpha\in T_x X$ such that
$$H^0(\mathscr{E}_\alpha)=H^0(\mathscr{E}_x);$$
i.e. $\textnormal{mult}_x \Theta_X = h^0(C,\mathscr{E}_x)$ if and only if there exists a tangent vector
 $\alpha$ such that no non-zero sections of $H^0(\mathscr{E}_x)$  lift to first order as sections of
$H^0(\mathscr{E}_\alpha)$.
    \end{lem}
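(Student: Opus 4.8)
The plan is to reduce the statement to a length computation on an artinian arc and then apply the lifting criterion from the earlier lemmas. The key observation is that $\textnormal{mult}_x \Theta_X$ equals the multiplicity of the restriction $\Theta_X|_S$ for a generic smooth curve $S \subseteq X$ through $x$, and by the first lemma on divisors, $\textnormal{mult}_x \Theta_X = \min_S \textnormal{mult}_s(\Theta_X|_S)$, the minimum taken over smooth curves $S$ through $x$, with the minimum achieved precisely by those $S$ whose tangent direction $\alpha = T_x S$ does \emph{not} lie in the tangent cone $C_x \Theta_X$. So I would first fix such a curve $S$, pull back the family $\mathscr{E}$ to $C \times S$, and use the lemma identifying $\Theta_S = \sum_{s} \ell((R^1\pi_{2*}\mathscr{E})_s)\cdot s$ to write $\textnormal{mult}_s(\Theta_X|_S) = \ell((R^1\pi_{2*}\mathscr{E})_s)$.

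Next I would compute this length via the arc lemmas. By the lemma giving $\ell((R^1\pi_{2*}\mathscr{E})_s) = h^0(C, \mathscr{E}_k)$ for $k \gg 0$ (stabilizing once $H^0(\mathscr{E}_k) = H^0(\mathscr{E}_{k+1})$), the restriction multiplicity along $S$ is exactly the length of the stabilized space of sections over the arc $S_k$ determined by the germ of $S$ at $x$. For the inequality $\textnormal{mult}_x \Theta_X \ge h^0(C, \mathscr{E}_x)$, I would note that $H^0(\mathscr{E}_x) = H^0(\mathscr{E}_0)$ always injects into $H^0(\mathscr{E}_k)$ via the inclusions $S_0 \hookrightarrow S_k$ coming from the exact sequence \eqref{eqndef}, so $h^0(C, \mathscr{E}_k) \ge h^0(C, \mathscr{E}_x)$ for every $k$, and hence for the stabilized value; this holds for every arc, so it holds for the minimizing arc as well.

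For the equality criterion, I would analyze when $h^0(C,\mathscr{E}_k) = h^0(C, \mathscr{E}_x)$ for large $k$, i.e. when the inclusion $H^0(\mathscr{E}_x) \hookrightarrow H^0(\mathscr{E}_k)$ is an isomorphism. The cleanest reduction is to first order: if $H^0(\mathscr{E}_\alpha) = H^0(\mathscr{E}_x)$ for the first-order deformation $\mathscr{E}_\alpha$ determined by a tangent vector $\alpha$, then by the lemma on lifting sections, no nonzero section of $H^0(\mathscr{E}_x)$ lifts to first order, which forces $H^0(\mathscr{E}_k) = H^0(\mathscr{E}_x)$ for all $k$ along any arc with that tangent direction (the stabilization already happens at $k=1$, so we may take $N=0$), giving $\textnormal{mult}_x\Theta_X = h^0(C,\mathscr{E}_x)$. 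Conversely, if equality holds, then the minimizing arc realizes $h^0(\mathscr{E}_k) = h^0(\mathscr{E}_x)$, and restricting to first order shows $H^0(\mathscr{E}_\alpha) = H^0(\mathscr{E}_x)$ for $\alpha = T_x S$. The precise statement about lifting of sections is then just a restatement via the preceding lemma: $H^0(\mathscr{E}_\alpha) = H^0(\mathscr{E}_x)$ is equivalent to saying every section fails to lift, i.e. $s \cup^\vee \alpha \ne 0$ for all nonzero $s \in H^0(\mathscr{E}_x)$.

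The main obstacle I expect is the \emph{converse} direction of the equality criterion, specifically justifying that equality of multiplicities can be detected at first order rather than requiring control of the full stabilized length. The inequality and the ``if'' direction are essentially formal consequences of the injectivity $H^0(\mathscr{E}_x) \hookrightarrow H^0(\mathscr{E}_k)$ and the lifting lemma, but to go from $\textnormal{mult}_x\Theta_X = h^0(\mathscr{E}_x)$ back to the existence of a suitable first-order $\alpha$, I need to know that the minimizing arc $S$ already witnesses no first-order lifting. This is where I would have to be careful: a priori one only knows $h^0(\mathscr{E}_k)$ stabilizes at $h^0(\mathscr{E}_x)$, and I must rule out the possibility that a section lifts to first order but then obstructs at higher order in a way that still keeps the total length equal to $h^0(\mathscr{E}_x)$. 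The resolution is that since $H^0(\mathscr{E}_x) \subseteq H^0(\mathscr{E}_1) \subseteq H^0(\mathscr{E}_k)$ and the outer two have equal dimension, the middle inclusion is forced to be an equality, so $H^0(\mathscr{E}_\alpha) = H^0(\mathscr{E}_1) = H^0(\mathscr{E}_x)$ automatically — making the first-order statement a direct consequence of the sandwiched dimension count.
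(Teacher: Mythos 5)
Your proof is correct and is essentially the paper's own argument: the paper's proof of this lemma is literally the single word ``Clear,'' i.e.\ exactly the assembly of the preceding lemmas (restriction of divisors to smooth curves through $x$, the identification of $\Theta_S$ with $\sum_s \ell((R^1\pi_{2*}\mathscr{E})_s)\cdot s$, and the stabilization criterion for that length) that you carry out explicitly. Your sandwich argument $H^0(\mathscr{E}_x)\subseteq H^0(\mathscr{E}_1)\subseteq H^0(\mathscr{E}_k)$, using the injections from the exact sequence \eqref{eqndef}, is precisely the right way to settle the converse direction, which is the only step requiring any thought.
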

\begin{proof}
Restricting to a general smooth curve $S\subseteq X$ passing through $x$, the previous lemmas imply
 $\textnormal{mult}_x \Theta_X=\textnormal{mult}_x \Theta_X|_S =h^0(C,\mathscr E_N)\ge  h^0(C,\mathscr{E}_x)$,
 for some $N>>0$.  In addition, Lemma \ref{lem19} implies that if $h^0(C,\mathscr E_1)=h^0(C,\mathscr E_x)$,
 then $h^0(C,\mathscr E_N)=h^0(C,\mathscr E_x)$.
\end{proof}

We can rephrase this using cup products.
Let $\mathscr{E}$ be a family of vector bundles with $\chi=0$ over a smooth curve $C$, parameterized
 by a smooth variety $X$, and such that
$\Theta_X$ is a divisor.  This family induces a map
$f:X\rightarrow \mathscr{U}$, which for each $x\in X$ induces a map
 $f_*:T_x X \rightarrow H^1(\mathscr{E}_x^\vee \otimes
\mathscr{E}_x)$.

\begin{cor}\label{corcup}
In the notation above, $\textnormal{mult}_x \Theta_X =
h^0(C,\mathscr{E}_x)$  if and only if there exists a tangent vector
$\alpha\in T_x X$ such that the dual cup product map
$$
\cup^\vee f_*\alpha: H^0(\mathscr{E}_x)\rightarrow
H^1(\mathscr{E}_x)
$$
is injective.  Moreover, if
$\textnormal{mult}_x \Theta_X =
h^0(C,\mathscr{E}_x)$  then
$$C_x\Theta =\{\alpha\in T_xX:(\ker \cup^\vee f_*\alpha) \ne 0\}.
$$
    \end{cor}
\begin{proof}
This is just a translation of the previous lemmas.
\end{proof}

We will also use the following lemma:

\begin{lem}\label{lemcomp} Let $E$ be a vector bundle on $C$.
Let $D$ be an effective divisor of degree $d$, and let
$\delta:H^0(\mathscr O_D(D))\rightarrow  H^1(\mathscr O_C)$ be the
induced coboundary map. Suppose $\epsilon\in H^0(\mathscr O_D(D))$,
and $\delta(\epsilon)=\alpha\in H^1(\mathscr O_C)$.   Then the dual
cup product $\cup ^\vee \alpha:H^0(E)\rightarrow H^1(E)$ is given by
composing $\cup \epsilon: H^0(E)\rightarrow H^0(E(D)|_D)$, with the
coboundary map $\partial:H^0(E(D)|_D)\rightarrow H^1(E)$.
\end{lem}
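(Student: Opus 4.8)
The plan is to recognize this lemma as the standard compatibility between cup products and the connecting homomorphisms of a short exact sequence, made explicit in \v{C}ech cohomology. First I would write down the sequence defining $\delta$, namely
$$
0\rightarrow \mathscr O_C\rightarrow \mathscr O_C(D)\rightarrow \mathscr O_D(D)\rightarrow 0,
$$
where the first map is the natural inclusion and $\mathscr O_C(D)/\mathscr O_C=\mathscr O_D(D)$. Tensoring by $E$, which is locally free and hence flat so that exactness is preserved, produces
$$
0\rightarrow E\rightarrow E(D)\rightarrow E(D)|_D\rightarrow 0,
$$
the sequence whose connecting map is $\partial:H^0(E(D)|_D)\rightarrow H^1(E)$. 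Under the identification $E\otimes \mathscr O_D(D)=E(D)|_D$, the map $\cup\epsilon$ is then literally $s\mapsto s\otimes \epsilon$.

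Next I would record the identification of the dual cup product with an honest cup product: for $\alpha\in H^1(\mathscr O_C)$ the map $\cup^\vee\alpha:H^0(E)\rightarrow H^1(E)$ agrees with $s\mapsto s\cup \alpha$ for the ordinary cup product $H^0(E)\otimes H^1(\mathscr O_C)\rightarrow H^1(E\otimes \mathscr O_C)=H^1(E)$. This follows from the compatibility of Serre duality with cup products that was used to define $\cup^\vee$ in the preliminaries, together with the observation that deforming $E$ by tensoring with a deformation of $\mathscr O_C$ corresponds to the image of $\alpha$ under $H^1(\mathscr O_C)\rightarrow H^1(E^\vee\otimes E)$.

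The heart of the argument is then a short \v{C}ech computation. Choosing an open cover $\{U_i\}$ and local lifts $\tilde\epsilon_i\in \mathscr O_C(D)(U_i)$ of $\epsilon$, the differences $\tilde\epsilon_i-\tilde\epsilon_j$ die in $\mathscr O_D(D)$, hence lie in $\mathscr O_C(U_{ij})$, and the cocycle $(\tilde\epsilon_i-\tilde\epsilon_j)$ represents $\alpha=\delta(\epsilon)$. To compute $\partial(s\cup\epsilon)$ I would lift the local sections $s|_{U_i}\cdot\epsilon$ of $E(D)|_D$ to $s|_{U_i}\cdot\tilde\epsilon_i$ in $E(D)(U_i)$; the resulting coboundary cocycle is $s\cdot(\tilde\epsilon_i-\tilde\epsilon_j)$, which is precisely the cup product of the global section $s$ with the cocycle representing $\alpha$. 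Hence $\partial(s\cup\epsilon)=s\cup\alpha=\cup^\vee\alpha(s)$, as claimed.

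I expect the only genuinely delicate point to be the bookkeeping in the second step, namely verifying that the $\cup^\vee$ of the preliminary section really coincides with the ordinary cup product in this degree zero--one situation, and confirming that no sign intervenes; it does not, since $s$ sits in degree zero and the connecting map is applied to the degree zero cochain $s\cdot\tilde\epsilon_i$. The flatness required to tensor the defining sequence is automatic for the locally free sheaf $E$, so the remaining verification is entirely routine.
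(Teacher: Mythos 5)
Your proof is correct: tensoring the defining sequence of $\mathscr O_D(D)$ by the locally free sheaf $E$, identifying $\cup^\vee\alpha$ with the ordinary cup product $s\mapsto s\cup\alpha$ via the compatibility of Serre duality with cup products (no sign issues since $s$ has degree zero), and then computing $\partial(s\cup\epsilon)$ through the \v{C}ech description of the connecting homomorphism is exactly the argument this lemma rests on. The paper itself gives no proof, deferring to \cite{cmf} Section 1.2, where essentially this same coboundary computation is carried out, so your write-up supplies the standard intended argument rather than a genuinely different route.
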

\begin{proof}
See \cite{cmf} Section 1.2.
\end{proof}

\subsection{A remark on multiplier ideals}\label{multid}  In this section we apply a result of Ein-Lazarsfeld
\cite{el} for pluri-theta divisors on abelian varieties to the case of certain generalized theta divisors on
moduli of vector bundles over curves.  For definitions, and more details on multiplier ideals,
we refer the reader to Lazarsfeld \cite{laz}.

Recall that given $\mathscr U_r=\mathscr U(r,r(g-1))$, the subvariety
$B^1_{r,r(g-1)}$ is a divisor which we will denote $\Theta_r$.
 Given a general
$E\in \mathscr U_r$, it is known  that the map
$$i_E:\textnormal{Pic}^0(C)\rightarrow \mathscr U_r$$
 defined  by $\xi
\mapsto \xi \otimes E$ is an embedding (see \cite{Li}).
It is easy to check by degenerating $E$ to a direct sum of $r$ line
bundles that if  $\Theta_r|_{\textnormal{Pic}^0(C)}=\Theta_E$ is a
divisor,
 then $\Theta_E\in |r\Theta|$, where $\Theta$ is a translate of the Riemann theta divisor on $\textnormal{Pic}^0(C)$.

Now, given a $\mathbb Q$-divisor D on a smooth quasi-projective
variety $X$, and $c\in \mathbb Q^+$, let
$\mathcal{J}(X,cD)=\mathcal{J}(cD)\subseteq \mathscr O_X$ be the associated multiplier
ideal.
 We will say that the pair $(X,D)$ is log-canonical if $\mathcal{J}(X,(1-\epsilon)D)=\mathscr O_X$
  for all $0< \epsilon <1$.  The pair is said to be log-canonical at a point $x\in X$ if there is an open neighborhood $U$ of $x$ so that the pair $(U,D|_U)$ is log-canonical.

\begin{rem}\label{remel}
A direct consequence of the results in \cite{el} is that
for all $r\ge 1$,
the pair $(\mathscr U_r,\frac{1}{r}\Theta_r)$ is log-canonical at all points $E$ such that
$i_E:\textnormal{Pic}^0(C)\rightarrow \mathscr U_r$ is an embedding and $\Theta_E\ne \operatorname{Pic}^0(C)$.
\end{rem}

Indeed, for $(A,\Theta)$ a principally polarized abelian variety (ppav), and $D\in |r\Theta|$,
 it is a result of Ein and Lazarsfeld \cite{el} that $(A,\frac{1}{r}D)$ is log-canonical.
 Setting $  \mathscr U_r'=\{E\in \mathscr U_r :   i_E \textnormal{ is an embedding and } \Theta_E\ne \operatorname{Pic}^0(C)\}$, the remark then follows from the fact (cf. Lazarsfeld \cite[Corollary 9.5.6]{laz} ) that
for all $E\in \mathscr U_r'$,
$$\mathcal{J}(\textnormal{Pic}^0(C),(1-\epsilon)\frac{1}{r}\Theta_E)\subseteq
\mathcal{J}(\mathscr U_r',(1-\epsilon)\frac{1}{r}\Theta_r')|_{\textnormal{Pic}^0(C)}.$$

\begin{rem}
It follows from Remark \ref{remel} (see for example \cite[Example 9.3.10]{laz}) that $\operatorname{mult}_E\Theta_r\le r[r^2(g-1)+1]$
 for all points $E$ such that
$i_E:\textnormal{Pic}^0(C)\rightarrow \mathscr U_r$ is an embedding and $\Theta_E\ne \operatorname{Pic}^0(C)$.  This bound is far from the bounds obtained using Theorem \ref{teo1}, and  ``Clifford'' type theorems.
For instance, using a  special case
of Theorem \ref{teo1}, Laszlo  showed that for $E\in \mathscr U(2,2(g-1))$, if $C$ is not hyperelliptic,
 then $\operatorname{mult}_E \Theta_2\le g$  (\cite[Remark after Proposition IV.2]{l} ).  This is similar to the difference between the bounds  on singularities of theta divisors on abelian varieties given via multiplier ideals, and those for Jacobians  given via Clifford's Theorem.
\end{rem}

\section{Singularities of Brill-Noether loci}\label{seckempf}

\subsection{Singularities of Brill-Noether loci for vector bundles parameterized by $\mathscr{U}(r,d)$.}

In this section we study the singularities of $B^k_{r,d}$.  We begin
by studying the tangent spaces to $G^k_{r,d}$.

In what follows, for a vector bundle
$E\in \mathscr{U}$, and
a subspace $W\subseteq H^0(E)$, we denote by $\mu_W$ the restriction of the cup product map to $W$:
$$\mu_W:W \otimes H^0(E^\vee\otimes K_C)\rightarrow H^0(E^\vee\otimes E \otimes K_C).$$

\begin{pro} Suppose that $0\le \rho(g,k,\mathscr{U})< \dim(\mathscr{U})$.
Let $E\in B^k_{r,d}$, and suppose that for all $W\in G(k,H^0(E))$,
 $\mu_W$ is injective.  Then $G^k_{r,d}$ is smooth in a neighborhood of the fiber $a^{-1}(E)$, and
 $a^{-1}(E)$ is smooth as a scheme.  Moreover, the normal  bundle $N=N_{a^{-1}(E)/G^k_{r,d}}$ can be described as
$$
 N=\{(W,\alpha)\in G(k,H^0(E))\times H^1(E^\vee \otimes E): \alpha\perp \operatorname{image}(\mu_W) \}
$$
and the differential $a_*:N\rightarrow T_E\mathscr{U}=H^1(E^\vee \otimes E)$ is given by projection onto the second factor.
    \end{pro}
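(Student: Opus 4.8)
The plan is to assemble the statement from Proposition \ref{protan} together with two general facts: that the smooth locus is open, and that tangent spaces commute with fibre products. First I would prove smoothness of $G^k_{r,d}$ near the fibre. By Proposition \ref{protan}(2), the hypothesis that $\mu_W$ is injective for every $W\in G(k+1,H^0(E))$ means that $G^k_{r,d}$ is smooth of dimension $\rho(g,k,\mathscr{U})$ at each point $(E,W)$ of $a^{-1}(E)$. Since the smooth locus is open and $a^{-1}(E)$, whose underlying set is $G(k+1,H^0(E))$, is connected, $G^k_{r,d}$ is smooth of pure dimension $\rho(g,k,\mathscr{U})$ in a neighbourhood of $a^{-1}(E)$.

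Next I would check that the scheme-theoretic fibre $a^{-1}(E)$ is smooth. Because tangent spaces commute with fibre products and the reduced point $E$ has trivial tangent space, the Zariski tangent space to $a^{-1}(E)$ at $(E,W)$ is $\ker(a_*)$, which by the exact sequence of Proposition \ref{protan} is $\operatorname{Hom}(W,H^0(E)/W)$, of dimension $(k+1)(h^0(E)-k-1)$. As the underlying set of $a^{-1}(E)$ is precisely $G(k+1,H^0(E))$, which is irreducible of this same dimension, the Zariski tangent space has exactly the dimension of the variety at every point; hence $a^{-1}(E)$ is smooth and reduced, and is identified with $G(k+1,H^0(E))$.

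Finally I would identify the normal bundle. Since $a^{-1}(E)$ and its neighbourhood in $G^k_{r,d}$ are both smooth, restricting the differential to the fibre yields an exact sequence of vector bundles over $a^{-1}(E)=G(k+1,H^0(E))$,
$$
0\rightarrow Ta^{-1}(E)\rightarrow TG^k_{r,d}|_{a^{-1}(E)}\stackrel{a_*}{\longrightarrow} H^1(E^\vee\otimes E)\otimes\mathscr{O}_{a^{-1}(E)},
$$
with $Ta^{-1}(E)=\ker(a_*)$, so that $N\cong\operatorname{image}(a_*)$. By Proposition \ref{protan}(1) the fibre of this image over $W$ is $(\operatorname{image}\mu_W)^\perp\subseteq H^1(E^\vee\otimes E)$. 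This produces the asserted description $N=\{(W,\alpha):\alpha\perp\operatorname{image}\mu_W\}$, and under the resulting inclusion of $N$ into the trivial bundle the map $a_*$ is visibly $(W,\alpha)\mapsto\alpha$, i.e. projection onto the second factor.

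The step I expect to require the most care is the last one: promoting the pointwise description of the normal space to an isomorphism of vector bundles. This is exactly where the full hypothesis---that $\mu_W$ is injective for \emph{every} $W$, not just for a single fixed subspace---is needed, since it guarantees that $\operatorname{rank}\mu_W$ is constant on $G(k+1,H^0(E))$, so that the spaces $(\operatorname{image}\mu_W)^\perp$ vary in a locally free family and indeed form a subbundle of $H^1(E^\vee\otimes E)\otimes\mathscr{O}$. A secondary subtlety lies in the middle step, where one must take the scheme-theoretic fibre over the smooth variety $\mathscr{U}$ and invoke the independently known underlying set $G(k+1,H^0(E))$ in order to upgrade the tangent-dimension count to genuine smoothness of the fibre scheme.
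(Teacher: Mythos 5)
Your proposal is correct and follows essentially the same route as the paper, which derives everything from Proposition \ref{protan}: smoothness near the fiber from part (2) and openness of the smooth locus, the normal bundle and differential from part (1), and smoothness of the scheme-theoretic fiber from the identification of $\ker(a_*)$ with $\operatorname{Hom}(W,H^0(E)/W)$. The only difference is one of exposition: the paper declares these consequences ``clear,'' while you supply the tangent-space and constant-rank details explicitly.
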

\begin{proof}
The description of the normal bundle and the differential are clear from Proposition \ref{protan}.
  The fact that the scheme theoretic fiber is smooth follows from the injectivity of the differential on the normal bundle.
\end{proof}

\begin{cor} With the same assumptions as in the proposition, if the cup product
$\mu_W$ is injective for all $W\in G(k,H^0(E))$, then $E$ lies on a
unique irreducible component $Z\subseteq B^k_{r,d}$, and
$Z\nsubseteq B^{k+1}_{r,d}$. Moreover, the morphism
$a:G^k_{r,d}\rightarrow B^k_{r,d}$  is birational in a neighborhood
of $E$.    \end{cor}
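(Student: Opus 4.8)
The plan is to analyze the proper morphism $a:G^k_{r,d}\to W^k_{r,d}$ near the fibre over $E$, using the local structure furnished by the preceding Proposition together with the fact that $W^k_{r,d}$ is a degeneracy locus. First I would record the local picture: since $\mu_W$ is injective for every $W\in G(k+1,H^0(E))$, the preceding Proposition shows that $G^k_{r,d}$ is smooth of dimension $\rho(g,k,\mathscr{U})$ along the fibre $a^{-1}(E)=G(k+1,H^0(E))$. As this fibre is an irreducible Grassmannian and a smooth scheme is locally irreducible, I may choose a connected---hence irreducible---open neighborhood $G$ of $a^{-1}(E)$ inside the smooth locus; thus $G$ is smooth, irreducible, of dimension $\rho(g,k,\mathscr{U})$. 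Set $Z=a(G)$. Since $a$ is proper (it is the projection from a relative Grassmannian) and $G$ is irreducible, $Z$ is an irreducible closed subvariety of $W^k_{r,d}$ through $E$, with $\dim Z\le \dim G=\rho(g,k,\mathscr{U})$.

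Next I would prove uniqueness of the component by a properness argument. Because $a^{-1}(E)\subseteq G$ and $a$ is a closed map, the image $a(G^k_{r,d}\setminus G)$ is closed and avoids $E$, so its complement is an open neighborhood $U'$ of $E$. Every $E'\in W^k_{r,d}\cap U'$ admits some $(E',V')\in a^{-1}(E')$, and since $E'\notin a(G^k_{r,d}\setminus G)$ we must have $(E',V')\in G$; hence $W^k_{r,d}\cap U'\subseteq a(G)$. Combined with the inclusion $a(G)\subseteq W^k_{r,d}$ this gives $a(G)\cap U'=W^k_{r,d}\cap U'$. Thus $W^k_{r,d}$ coincides near $E$ with the irreducible set $Z$, so $E$ lies on a unique irreducible component, which is $Z$, and $Z\nsubseteq$ is not yet excluded from $W^{k+1}_{r,d}$ at this stage.

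Finally I would pin down the dimension to obtain birationality and the statement $Z\nsubseteq W^{k+1}_{r,d}$. Viewing $W^k_{r,d}$ locally as the degeneracy locus of $\gamma:K^0\to K^1$, every component has dimension at least the expected value $\rho(g,k,\mathscr{U})$, the standard codimension bound for degeneracy loci (cf. \cite{acgh}, \cite{f}); hence $\dim Z\ge \rho(g,k,\mathscr{U})$. Together with the upper bound this forces $\dim Z=\dim G=\rho(g,k,\mathscr{U})$, so the dominant morphism $a:G\to Z$ of irreducible varieties of equal dimension is generically finite. The fibre of $a$ over $E'\in Z$ is $G(k+1,H^0(E'))$, which is finite precisely when $h^0(E')=k+1$, in which case it is the single reduced point $(E',H^0(E'))$. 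Therefore the general point of $Z$ satisfies $h^0=k+1$, which simultaneously shows $Z\nsubseteq W^{k+1}_{r,d}$ and that $a$ is birational near $E$. The main obstacle is exactly this last dimension comparison: ruling out the possibility $Z\subseteq W^{k+1}_{r,d}$ (equivalently, that $a$ contracts positive-dimensional Grassmannian fibres everywhere) is what makes $a$ generically injective, and it is resolved by sandwiching $\dim Z$ between the degeneracy-locus lower bound and the source dimension $\rho(g,k,\mathscr{U})$.
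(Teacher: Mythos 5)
Your proof is correct and follows essentially the same route as the paper's: uniqueness of the component comes from the smoothness of $G^k_{r,d}$ along the connected Grassmannian fiber (your properness ``tube'' argument just spells out the paper's terser statement), and the comparison of the degeneracy-locus lower bound $\dim Z\ge \rho(g,k,\mathscr{U})$ with $\dim G^k_{r,d}=\rho(g,k,\mathscr{U})$ along $a^{-1}(E)$ yields both $Z\nsubseteq W^{k+1}_{r,d}$ and the birationality of $a$ near $E$. The only difference is presentational: where the paper argues by contradiction ($Z\subseteq W^{k+1}_{r,d}$ would force $\dim a^{-1}(Z)>\rho$), you deduce generic finiteness and then identify the generic fiber as a reduced point, which amounts to the same dimension count.
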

\begin{proof}The fibers of $a$ are Grassmannians, and hence connected.
If there were more than one component of $B^k_{r,d}$ passing through
$E$, there would have to be multiple components of the fiber over
$E$, since $G^k_{r,d}$ is smooth along the fiber.  This would
contradict the connectedness of the fiber. Now let $Z$ be the unique
irreducible component containing $E$.  It must have dimension at
least $\rho(g,k,\mathscr{U})$.  If $Z\subseteq B^{k+1}_{r,d}$, then
$\dim(a^{-1}(Z))>\rho(g,k,\mathscr{U})$, since the Grassmannian has
positive dimension.  But this contradicts the fact that
$\dim(G^k_{r,d})=\rho$ along $a^{-1}(E)$ (Proposition
\ref{protan}).
\end{proof}

\begin{rem}  In fact no component of $B^k_{r,d}$ is contained in $
B^{k+1}_{r,d}$ (see  \cite[Proposition 1.6]{laum}).
\end{rem}

To state the following theorem, we need to introduce some notation.
First we must choose  bases $x_1,\ldots,x_{h^0(E)}$ and
$y_1,\ldots,y_{h^1(E)}$ for $H^0(E)$ and $H^0(E^\vee\otimes K_C)$
respectively.   For simplicity, write $x_iy_j$ for the image of
$x_i\otimes y_j$ under the cup product mapping. We then have the
following generalization of a theorem of Laszlo:

\begin{teo}[Kempf's Theorem]\label{kempfteo} Suppose that $0\le \rho(g,k,\mathscr{U})< \dim(\mathscr{U})$.
Let $E\in W^k_{r,d}$, and suppose that for all $W\in G(k,H^0(E))$,
$\mu_W$ is injective.

 Then:
 \begin{enumerate}
\item $C_EB^k_{r,d}$ is Cohen-Macaulay, reduced and normal.

\item The ideal of $C_EB^k_{r,d}$, as a subvariety of $H^1(E^\vee\otimes E)$,
 is generated by the $(h^0(E)-k+1)\times (h^0(E)-k+1)$ minors of the matrix
$$
M=[x_iy_j]_{i=1,\ldots,h^0(E), j=1,\ldots,h^1(E)}.
$$
\item The multiplicity of $B^k_{r,d}$ at $E$ is
$$
\textnormal{mult}_E B^k_{r,d}=\prod_{h=0}^{k-1}
\frac{(h^1(E)+h)!h!}{(h^0(E)-k+h)!(h^1(E)-h^0(E)+k+h)!}.
$$

\item As a set $$C_E B^k_{r,d} = \bigcup_{W\in G(k,H^0(E))}(\operatorname{image}(\mu_W))^\perp.$$
\end{enumerate}
\end{teo}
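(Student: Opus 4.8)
The plan is to apply Kempf's Lemma (Lemma \ref{lemk}) directly to the local resolution $a\colon G^k_{r,d}\to W^k_{r,d}$, taking $X=G^k_{r,d}$, $Y=\mathscr U(r,d)$, $f=a$, $Z=W^k_{r,d}$, and $z=E$. The linear-algebra data are $A=H^0(E)$ with $a=h^0(E)$, $B=H^0(E^\vee\otimes K_C)$ with $b=h^1(E)$ (by Serre duality), the map $\phi=\mu\colon A\otimes B\to H^0(E^\vee\otimes E\otimes K_C)=H^1(E^\vee\otimes E)^\vee=(T_E\mathscr U)^\vee$, and the integer $w=k+1$. First I would verify the hypotheses of Lemma \ref{lemk}: smoothness of $X$ near $a^{-1}(E)$ and of the fiber $a^{-1}(E)=G(k+1,H^0(E))$ follow from the Proposition preceding the theorem, using precisely the injectivity of $\mu_W$; birationality of $a$ near $E$ is the Corollary; and $a$ is proper, being a relative Grassmannian over its image. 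The description of the normal cone $N=N_{a^{-1}(E)/G^k_{r,d}}$ given in that Proposition, namely $N=\{(W,\alpha):\alpha\perp\operatorname{image}(\mu_W)\}$ with $da$ equal to projection onto the second factor, is exactly conditions 3(a)--(b) since $\operatorname{image}(\mu_W)=\phi(W\otimes B)$, and condition 3(c) is the standing hypothesis.

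With the hypotheses in hand, conclusions (1), (2), (3) of Lemma \ref{lemk} yield parts (1), (2), (3) of the theorem verbatim: the Cohen--Macaulay, reduced, normal assertion; the ideal generated by the $(a-w+1)\times(a-w+1)=(h^0(E)-k)\times(h^0(E)-k)$ minors of $M=[\phi(x_i\otimes y_j)]=[x_iy_j]$; and, after substituting $a=h^0(E)$, $b=h^1(E)$, $w=k+1$ (so the index $h$ becomes $i$ ranging over $0,\dots,k$), the multiplicity product displayed in part (3). Thus the substance left to prove is the set-theoretic description (4).

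Since $da$ is projection onto the second factor, the set-theoretic image of $N$ is exactly $\bigcup_{W\in G(k+1,H^0(E))}(\operatorname{image}(\mu_W))^\perp$, so (4) is the assertion $C_E W^k_{r,d}=\operatorname{image}(da|_N)$. For the inclusion $\operatorname{image}(da|_N)\subseteq C_E W^k_{r,d}$ I would use functoriality of normal cones for the morphism $a$, which sends the fiber $a^{-1}(E)$ to the point $E$ and hence induces a morphism of cones $N_{a^{-1}(E)/G^k_{r,d}}\to C_E W^k_{r,d}$ that on tangent spaces is $da$; concretely, each $(W,\alpha)\in N$ is the tangent datum of a smooth arc in $G^k_{r,d}$ transverse to $a^{-1}(E)$, whose image arc in $W^k_{r,d}$ has tangent direction $\alpha\in C_E W^k_{r,d}$. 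For the reverse inclusion I would invoke Kempf's construction underlying Lemma \ref{lemk}: passing to $\operatorname{Bl}_{a^{-1}(E)}G^k_{r,d}$ and $\operatorname{Bl}_E W^k_{r,d}$, the proper birational map $a$ lifts (by the universal property of blowing up, since $a^{-1}(E)$ is the scheme-theoretic fiber) to a proper surjection whose restriction to exceptional divisors is the projectivized differential $\mathbb P(N)\to\mathbb P(C_E W^k_{r,d})$; surjectivity forces $C_E W^k_{r,d}\subseteq\operatorname{image}(da|_N)$. Finally the image is genuinely closed, since $\mathbb P(N)$ is a projective bundle over the proper Grassmannian $G(k+1,H^0(E))$, hence proper, so its image in $\mathbb P(T_E\mathscr U)$ is closed and $\operatorname{image}(da|_N)$ is the affine cone over it.

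The hard part will be the reverse inclusion in (4): checking that the tangent cone is not strictly larger than the image of the differential on the normal cone. This is precisely the heart of Kempf's method and relies essentially on $a$ being both proper and birational (so that the lift $\mathbb P(N)\to\mathbb P(C_E W^k_{r,d})$ is surjective, with no spurious components) together with the smoothness of $G^k_{r,d}$ and of the fiber secured by the injectivity of $\mu_W$. By contrast, the forward inclusion and the closedness of the image are formal, and parts (1)--(3) are a direct transcription of Lemma \ref{lemk}.
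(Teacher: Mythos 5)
Your proposal is correct and follows essentially the same route as the paper: the paper's proof is precisely to feed the preceding Proposition (smoothness of $G^k_{r,d}$ along the fiber, the description of $N$ and of $a_*$) and Corollary (birationality of $a$ near $E$) into Lemma \ref{lemk} with $A=H^0(E)$, $B=H^0(E^\vee\otimes K_C)$, $\phi=\mu$, $w=k+1$. Your additional blow-up argument for the set-theoretic statement (4) is a faithful unpacking of what Kempf's method (as in Arbarello et al.) already provides, so it is a welcome elaboration rather than a departure.
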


\begin{proof}
With the previous results in this section, the theorem follows from
the results of Kempf \cite{kt}  (referenced in Lemma \ref{lemk}).
\end{proof}

\begin{rem}
For another interpretation of the tangent cones, see
Hitching \cite[Section 5]{h}.
\end{rem}

In the case that $k=1$, the restricted cup product maps are always
injective, allowing us to prove  the corollary below, of which
Theorem \ref{teo1} is a special case.

\begin{cor}
  Suppose $0\le \rho(g,1,\mathscr{U}(r,d))< \dim(\mathscr{U}(r,d))$.   Then $B^1_{r,d}$ is a reduced,
   Cohen-Macaulay subscheme of the expected dimension $\rho(g,1,\mathscr{U}(r,d))$; i.e.
$$ \dim B^1_{r,d}= \dim\mathscr{U}(r,d)-1+\chi. $$
If $\dim B^2_{r,d}\le \rho(g,2,\mathscr U(r,d))+1-\chi$, then
$B^1_{r,d}$ is also normal.

For all  $E\in B^1_{r,d}$, the tangent cone $C_E B^1_{r,d}$ is a
reduced, Cohen-Macaulay and normal subvariety of $T_E\mathscr
U(r,d)=H^1(E^\vee\otimes E)$, defined by the $h^0(E)\times h^0(E)$
minors of an $h^0(E)\times h^1(E)$ matrix with entries in
$H^1(E^\vee \otimes E)^\vee$. Moreover, $ \textnormal{mult}_E
B^1_{r,d}=\binom{h^1(E)}{h^0(E)-1}$.
    \end{cor}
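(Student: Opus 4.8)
The plan is to reduce the entire statement to Theorem~\ref{kempfteo} applied with $k=0$, and then transfer the resulting tangent-cone information into global statements about $W^0_{r,d}$.

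First I would verify the claim, asserted just before the corollary, that for $k=0$ the restricted cup product map $\mu_W$ is injective for \emph{every} $E\in W^0_{r,d}$ and every $W\in G(1,H^0(E))$. Write $W=\langle s\rangle$ with $s\neq 0$. Viewed as a sheaf map $s\colon\mathscr O_C\to E$, it is injective, since $E$ is locally free and $C$ is integral so the kernel is a torsion subsheaf of $\mathscr O_C$, hence zero. Tensoring with the locally free sheaf $E^\vee\otimes K_C$ preserves injectivity, so $s\otimes\mathrm{id}\colon E^\vee\otimes K_C\to E\otimes E^\vee\otimes K_C$ is an injective map of sheaves; applying the left-exact functor $H^0$ gives that the induced map on global sections, which is exactly $\mu_W$, is injective. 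With this observation the hypotheses of Theorem~\ref{kempfteo} hold at every $E\in W^0_{r,d}$, and parts (1)--(2) of that theorem immediately yield assertion~(3) of the corollary: the tangent cone is reduced, Cohen--Macaulay and normal, cut out by the $h^0(E)\times h^0(E)$ minors of the $h^0(E)\times h^1(E)$ matrix $M=[x_iy_j]$.

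Next I would read off the multiplicity. Taking $A=H^0(E)$ and $B=H^0(E^\vee\otimes K_C)$, so $a=h^0(E)$ and $b=h^1(E)$ by Serre duality, and $w=k+1=1$, the Kempf product in Lemma~\ref{lemk}(3) has the single factor $\frac{b!}{(a-1)!\,(b-a+1)!}=\binom{h^1(E)}{h^0(E)-1}$, which is assertion~(4).

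Finally I would deduce the global assertions (1)--(2) from these local data. Since the tangent cone at every point of $W^0_{r,d}$ is reduced and Cohen--Macaulay, so are the corresponding local rings, because both properties descend from the associated graded ring to the ring itself; and $\dim_E W^0_{r,d}=\dim C_EW^0_{r,d}$, the latter being the determinantal locus of codimension $h^1(E)-h^0(E)+1=1-\chi$ in $T_E\mathscr U$, which gives the pure expected dimension $\rho(g,0,\mathscr U)=\dim\mathscr U-1+\chi$. The main point requiring care is normality, which I would obtain from Serre's criterion $R_1+S_2$: Cohen--Macaulayness supplies $S_2$, so it remains to bound the singular locus. At a point with $h^0(E)=1$ the multiplicity is $\binom{h^1(E)}{0}=1$, so $W^0_{r,d}$ is smooth there (using reducedness), whence $\textnormal{Sing}\,W^0_{r,d}\subseteq W^1_{r,d}$. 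The key numerical identity is $\rho(g,1,\mathscr U)+1-\chi=\rho(g,0,\mathscr U)-2$, which shows that the hypothesis $\dim W^1_{r,d}\le\rho(g,1,\mathscr U)+1-\chi$ is precisely the condition that $\textnormal{Sing}\,W^0_{r,d}$ has codimension at least two in $W^0_{r,d}$. This gives $R_1$, and combined with $S_2$ yields normality.
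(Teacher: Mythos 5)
Your proposal is correct, and its pointwise half coincides with the paper's argument: you verify that $\mu_W$ is injective for $k=0$ (the paper asserts this without proof; your torsion-free/left-exactness argument is exactly the intended one), invoke Theorem~\ref{kempfteo}, and extract the multiplicity from Kempf's product formula with $w=1$, $a=h^0(E)$, $b=h^1(E)$. Where you genuinely diverge is in passing to the global statements. The paper gets the dimension from the smoothness of $G^0_{r,d}$ of dimension $\rho$ together with birationality of $a\colon G^0_{r,d}\to W^0_{r,d}$; Cohen--Macaulayness from the fact that $W^0_{r,d}$ is locally a degeneracy locus of the expected dimension (Eagon--Hochster); and reducedness from generic reducedness (no component lies in $W^1_{r,d}$, so the multiplicity formula gives multiplicity one generically) combined with Cohen--Macaulayness. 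You instead descend reducedness and Cohen--Macaulayness at every point directly from the tangent cone, using that both properties pass from $\operatorname{gr}_{\mathfrak m}$ to the local ring; this is legitimate and more uniform, at the price of invoking those commutative-algebra descent theorems, which the paper's geometric route avoids. The treatments of normality agree (Serre's $R_1+S_2$), and your explicit verification of the identity $\rho(g,1,\mathscr U)+1-\chi=\rho(g,0,\mathscr U)-2$ fills in what the paper dismisses as ``easy to check.'' One caveat in your dimension count: being cut out by the maximal minors of an $h^0(E)\times h^1(E)$ matrix only bounds the codimension of $C_EW^0_{r,d}$ \emph{above} by $1-\chi$ (Eagon--Northcott), so your phrase ``the determinantal locus of codimension $1-\chi$'' assumes the expected codimension rather than proving it; to get the opposite bound you should cite part (4) of Theorem~\ref{kempfteo}, which exhibits $C_EW^0_{r,d}$ set-theoretically as a union over $\mathbb{P}H^0(E)$ of linear subspaces of codimension $h^1(E)$, hence of dimension at most $\rho(g,0,\mathscr U(r,d))$. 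With that one-line patch your argument is complete.
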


\begin{proof}
The statement about the multiplicity follows directly from  Theorem
\ref{kempfteo}. It also follows from the injectivity of the cup
product  that $B^1_{r,d}$ is locally irreducible.
 Since $G^1_{r,d}$ is smooth of the correct dimension, $B^1_{r,d}$ also has the correct dimension;
  by virtue of the fact that $B^1_{r,d}$ is  determinantal, it follows that it is Cohen-Macaulay.

We have also seen that no component of $B^1_{r,d}$ is contained in
$B^2_{r,d}$, and so, by the formula for multiplicity, this shows
that $B^1_{r,d}$ is generically reduced. Generically reduced and
Cohen-Macaulay imply reduced.  If $B^1_{r,d}$ is non-singular in
codimension one, then it is normal.  It is easy to check that this
is the case if $\dim(B^2_{r,d})\le \rho(g,2,\mathscr
U(r,d))+1-\chi$.
\end{proof}

\begin{rem} It follows from work of Bradlow et al. \cite{n} that
when $B^1_{r,d}$ is normal, it has rational singularities, and
moreover, $B^1_{r,d}$ is normal for general curves. Indeed,
\cite[Theorem 11.7]{n}  implies that $G^1_{r,d}$ is a resolution of
$B^1_{r,d}$.  The fibers of the map $a$ are projective spaces, so
that $R^i a_* \mathscr O_{G^1_{r,d}}=0$ for all $i>0$. Thus when
$B^1_{r,d}$ is normal, $\mathbb Ra_*\mathscr O_{G^1_{r,d}}=\mathscr
O_{B^1_{r,d}}$, and so $B^1_{r,d}$ has rational singularities.
\cite[Theorem 8.1]{n} establishes that this is true for (Petri)
general curves.
\end{rem}

\begin{rem}\label{remgencup}
 Recall that given a vector bundle $E\in \mathscr U(r',d')$, one can also consider the subvarieties
 $B^k_E \subseteq \mathscr U(r,d)$ parameterizing those vector bundles that when tensored by $E$
  have at least $k$ linearly independent global sections; i.e. as a set $
B^k_E=\{M\in \mathscr U(r,d):h^0(E\otimes M)\ge k\}. $
 Given $W\subseteq G(k, H^0(E\otimes M))$, let
$$
\mu'_W:W\otimes H^0(E^\vee \otimes M^\vee \otimes K_C) \rightarrow H^0(M^\vee\otimes M\otimes K_C)
$$
be the appropriate cup product map. If $E\otimes M$ is stable, which is the case for general $E$ (see Theorem \ref{teoram} below), then the same statement as in
Theorem \ref{kempfteo}  holds for $B^k_E\subseteq \mathscr U(r,d)$ and
$C_MB^k_E$ in this situation, replacing $E$ with $E\otimes M$, and
$\mu_W$ with $\mu'_W$.  
\end{rem}

\begin{rem}\label{remcup}
Unlike in the case of $B^k_{r,d}$, for the varieties $B^k_E\subseteq
\textnormal{Pic}^0(C)$, even when $\chi(E)=0, k=1$, it is not clear
when the dual cup product map is injective.  In fact, if it were
always injective, it would follow that for any $L\in
\textnormal{Pic}^0(C)$ and any $E\in \mathscr U(r,r(g-1))$,
$\textnormal{mult}_L B^1_E=h^0(E\otimes L)$. Examples where this
condition fails were constructed by Laszlo \cite{l} and the second
author \cite{t}.  See Remark \ref{remex} and Section \ref{secexc}
for more on this.
\end{rem}

The proof of the following result was communicated to the authors by
S. Ramanan.

\begin{teo} \label{teoram} Let $M$ be
a stable vector bundle on $C$ and $E$ a  generic stable vector
bundle on $C$. Then  $M\otimes E$ is stable.
\end{teo}

\begin{proof} By virtue of the tensor product preserving correspondence between stable vector
bundles and representations (see \cite{ns}, \cite{ramanan}), it follows that  
$M\otimes E$ is polystable. Consequently, it is well known that in order to
check the stability of $M\otimes E$, it suffices to show that its only automorphisms are homotheties, that is $h^0(M\otimes E\otimes
M^\vee\otimes E^\vee)=1$.

To begin, we recall that the trace map $Tr:E\otimes E^\vee\to \mathcal O_C$ gives rise to the well known decomposition
\begin{equation}\label{eqndecomp}
E\otimes E^\vee={\mathcal O}_C\oplus ad(E)
\end{equation}
where ${\mathcal O}_C$
represents the homotheties of $E$ and $ad(E)$ the set of traceless
automorphisms.  Concretely, the splitting is given by sending an automorphism
$\varphi$ of $E$ to the pair $(Tr(\varphi), \varphi-\frac{Tr(\varphi)}{\operatorname{rk E}}Id_E)$. By assumption $E$ is stable, and thus $h^0(E\otimes E^\vee)=1$. 
Hence,
$h^0(ad(E))=0$ (and the same applies to $ad(M)$).
We also point out that as $E$ and $E^\vee$ are both stable, one has (as above) that $E\otimes
E^\vee$ is polystable, and it then follows from (\ref{eqndecomp}) that $ad(E)$ is  polystable as well (and the same applies to $(ad(M))^\vee$).

Now the decomposition (\ref{eqndecomp}) gives rise to a decomposition
$$M\otimes E\otimes  M^\vee\otimes E^\vee={\mathcal O}_C\oplus
ad(M)\oplus ad(E)\oplus ad(M)\otimes ad(E).$$
We already know that
$h^0(ad(M))=h^0(ad(E))=0$. So in order to check that  $h^0(M\otimes E\otimes
M^\vee\otimes E^\vee)=1$, it only remains to show that
$h^0(ad(M)\otimes ad(E))=0$. This is equivalent to showing there
are no non-trivial morphisms $(ad(M))^\vee \to ad(E)$. 
As the two
bundles are polystable of degree zero, this is equivalent to proving
that none of their direct summand decompositions are isomorphic. This
is true if $E$ is generic, since one can for instance
degenerate $E$ to a direct sum.
\end{proof}

\section{Singularities of generalized theta divisors}
In this section we consider the multiplicity of singularities of generalized theta divisors.
  Since Laszlo's result (cf.
Theorem \ref{teo1}) gives a complete description of the
singularities for $\Theta_r \subseteq \mathscr U(r,r(g-1))$,  we
will focus on the restrictions of these divisors.

\subsection{Singularities of $\Theta_E$ in $\mathscr  U(r,d) $}
In this section we generalize the results of Laszlo \cite{l} for $|2\Theta|$ divisors on
$Pic^0(C)$ to all generalized theta divisors on
$\mathscr U(r,d)$.
Recall that given $E\in \mathscr U(r',d')$, such that
$r'd+rd'=rr'(g-1)$, we define
$$\Theta_E:=\{M\in \mathscr U(r,d) : h^0(E\otimes M)>0\}.
$$
In what follows we will always make the assumption that $\Theta_E$ is a divisor.
We have seen:
\begin{pro}  In the notation above
$$
\textnormal{mult}_M\Theta_E\ge h^0(E\otimes M),
$$
and $\textnormal{mult}_M \Theta_E=h^0(E\otimes M)$ if and only if
there exists a cocycle  $\alpha\in H^1(M^\vee \otimes M)$ such that
$\cup^\vee \alpha:H^0(E\otimes M)\rightarrow H^1(E\otimes  M)$ is
injective. Moreover, if $\textnormal{mult}_M\Theta_E=h^0(E\otimes
M)$, then the tangent cone $C_M\Theta_E$ is given as the determinant
of an $h^0(E\otimes M)\times h^0(E\otimes M)$ matrix with linear
entries.
     \end{pro}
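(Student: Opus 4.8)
The plan is to deduce the proposition by specializing the general machinery for theta divisors, established in Lemmas \ref{lemtheta} and the corollary that follows it, to the family $\mathscr{E}$ over $X=\mathscr{U}(r,d)$ whose fiber over $M$ is $E\otimes M$. First I would verify the numerical hypothesis: the relation $r'd+rd'=rr'(g-1)$ gives $\chi(E\otimes M)=rd'+r'd+rr'(1-g)=0$, so $\mathscr{E}$ is a family of vector bundles with $\chi=0$ in the sense required. Since $\mathscr{U}(r,d)$ is smooth and $\Theta_E$ is assumed to be a divisor, Lemma \ref{lemtheta} applies directly and yields both the inequality $\textnormal{mult}_M\Theta_E\ge h^0(E\otimes M)$ and the statement that equality holds precisely when some tangent vector $\alpha$ fails to lift the sections of $H^0(E\otimes M)$ to first order. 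Because $\mathscr{U}(r,d)$ carries no global universal bundle, I would work \'etale-locally as in Section \ref{secfammod}; since multiplicity and tangent cone are local invariants, this introduces no difficulty.

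Next I would translate the equality criterion into cup products via the corollary following Lemma \ref{lemtheta}, which rephrases ``$\alpha$ does not lift the sections'' as injectivity of the dual cup product $\cup^\vee f_*\alpha:H^0(E\otimes M)\to H^1(E\otimes M)$, where $f_*:T_M\mathscr{U}(r,d)=H^1(M^\vee\otimes M)\to H^1((E\otimes M)^\vee\otimes(E\otimes M))$ is induced by tensoring with $E$. Writing $\cup^\vee\alpha$ for $\cup^\vee f_*\alpha$, as in the statement, produces the claimed criterion. The only point that needs spelling out is that $f_*\alpha$ is the extension class of the first-order deformation of $E\otimes M$ obtained by tensoring the deformation of $M$ associated to $\alpha$ by $E$, so that the standard identification of the connecting homomorphism with the dual cup product applies verbatim.

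For the tangent cone, I would exploit that $\chi=0$ forces $h^1(E\otimes M)=h^0(E\otimes M)$ by Riemann--Roch. After passing to a minimal presentation near $M$, the divisor $\Theta_E$ is cut out locally by $\det\gamma$ for a square matrix $\gamma$ of size $h^0(E\otimes M)$ whose entries vanish at $M$. The lowest-degree part of $\det\gamma$ is then the determinant of the matrix of linear parts of $\gamma$, and by the standard result identifying connecting homomorphisms with the dual cup product (equivalently, the $k=0$ case of the determinantal description in Theorem \ref{kempfteo}) this linear-part matrix is, after Serre duality, the cup product matrix whose value at $\alpha\in H^1(M^\vee\otimes M)$ represents $\cup^\vee f_*\alpha$. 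Hence when $\textnormal{mult}_M\Theta_E=h^0(E\otimes M)$ this determinant does not vanish identically, and the tangent cone $C_M\Theta_E\subseteq H^1(M^\vee\otimes M)$ is exactly its zero locus, namely the determinant of an $h^0(E\otimes M)\times h^0(E\otimes M)$ matrix with linear entries.

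The main obstacle is the identification in the final step: that the differential at $M$ of the determinantal equation $\det\gamma$ genuinely realizes the dual cup product matrix. This requires choosing the presentation $\gamma$ so that its linearization at $M$ computes $\cup^\vee f_*\alpha$, which is precisely where the deformation-theoretic input (first-order lifting of sections and the connecting homomorphism) enters, together with the verification that $f_*\alpha=\mathrm{Id}_E\otimes\alpha$ under the natural identification.
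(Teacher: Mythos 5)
Your argument is correct and is essentially the paper's own (one-line) proof, fleshed out: the inequality and the equality criterion are exactly the specialization of Lemma \ref{lemtheta} and its corollary, applied \'etale-locally to the family $M'\mapsto E\otimes M'$ with $\chi=0$, which is precisely ``the discussion in Section 1'' that the paper invokes. The only cosmetic difference is the tangent-cone step, where you linearize a minimal local presentation $\det\gamma$ of $\Theta_E$ directly on $\mathscr U(r,d)$, rather than citing Theorem \ref{teo1} for the tangent cone of $W^0_{rr',rr'(g-1)}$ at $E\otimes M$ and restricting it along the embedding $M'\mapsto E\otimes M'$; both routes produce the same $h^0(E\otimes M)\times h^0(E\otimes M)$ matrix of linear forms on $H^1(M^\vee\otimes M)$.
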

\begin{proof}
This follows directly from the discussion in Section 1, and from Theorem \ref{teo1}.
\end{proof}

 We begin by looking at conditions for when
 $\textnormal{mult}_M\Theta_E=h^0(E\otimes M)$.

\begin{lem}\label{lemfirst}
Let $E$, $M$, and $\Theta_M$ be as above.  For any extension of vector bundles
$$
0\rightarrow L \rightarrow E \rightarrow F \rightarrow 0,
$$
$\textnormal{mult}_M\Theta_E\ge h^0(L\otimes M)+h^1(F\otimes M)$.
     \end{lem}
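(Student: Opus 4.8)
The plan is to show that $\textnormal{mult}_M\Theta_E \ge h^0(L\otimes M) + h^1(F\otimes M)$ by exhibiting a lower bound on the vanishing order of the defining equation of $\Theta_E$ along a well-chosen arc, and then invoking the general multiplicity estimate. The starting observation is that tensoring the given extension $0\to L\to E\to F\to 0$ by $M$ yields
\begin{equation*}
0\rightarrow L\otimes M \rightarrow E\otimes M \rightarrow F\otimes M \rightarrow 0.
\end{equation*}
Since $r'd+rd'=rr'(g-1)$, the family $\mathscr E$ parameterizing $\{E\otimes M'\}_{M'}$ has $\chi=0$, so $\Theta_E$ is the zero locus of the determinant of a square complex $K^0\xrightarrow{\gamma}K^1$, and by Lemma~\ref{lemtheta} the multiplicity at $M$ is bounded below by $h^0(E\otimes M)$. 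To improve this to the claimed bound, I would work on the artinian arc $S_k$ and track how sections of $L\otimes M$ and obstructions living in $H^1(F\otimes M)$ both contribute to the length $\ell((R^1\pi_{2*}\mathscr E)_s)$.

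The key step is a deformation-theoretic comparison. For a general tangent direction $\alpha\in T_M\mathscr U(r,d)=H^1(M^\vee\otimes M)$, the dual cup product $\cup^\vee f_*\alpha: H^0(E\otimes M)\to H^1(E\otimes M)$ controls which sections lift to first order (by the infinitesimal-deformation lemma and its corollary). The point is that this map must respect the subbundle filtration coming from the extension: the long exact sequence in cohomology associated to $0\to L\otimes M\to E\otimes M\to F\otimes M\to 0$ fits into a commutative diagram with the deformation connecting maps, so sections of $H^0(E\otimes M)$ coming from $H^0(L\otimes M)$ land in the image of $H^1(L\otimes M)\to H^1(E\otimes M)$, while the coboundary $H^0(F\otimes M)\to H^1(L\otimes M)$ interacts with $H^1(F\otimes M)$. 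Counting dimensions through this diagram, the number of sections that fail to lift (equivalently, the corank of $\cup^\vee f_*\alpha$) is at least $h^0(L\otimes M)+h^1(F\otimes M)$ for every $\alpha$, because the contribution of $L\otimes M$ on sections and of $F\otimes M$ on $H^1$ cannot be killed simultaneously by a single cup product direction that factors through the quotient structure. By the corollary following Lemma~\ref{lemtheta}, a lower bound on this corank that holds for all $\alpha$ translates directly into the stated lower bound on $\textnormal{mult}_M\Theta_E$.

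The technically delicate point — and what I expect to be the main obstacle — is making the count of non-liftable sections uniform in $\alpha$ and correctly accounting for the $h^1(F\otimes M)$ term. The subtlety is that $h^1(F\otimes M)$ enters not as sections of $F\otimes M$ but through Serre duality as sections of $F^\vee\otimes M^\vee\otimes K_C$, i.e.\ through the cup-product matrix $M(\alpha)$ of Lemma~\ref{lemcomp} rather than directly. I would therefore recast the argument on the level of the cup-product matrix: after choosing bases adapted to the filtration, the matrix representing $\mu_{H^0(E\otimes M)}$ acquires a block-triangular shape forced by the extension, and the rank of each specialization $M(\alpha)$ is bounded above by $h^1(E\otimes M)-h^1(F\otimes M)$ minus the correction coming from sections of $L\otimes M$. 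Extracting the precise inequality $\operatorname{rank}\le h^0(E\otimes M)-\big(h^0(L\otimes M)+h^1(F\otimes M)\big)$ uniformly, and verifying that no degeneracy of the matrix can exceed this generic estimate, is the heart of the proof; once that block-structure bound is established, the multiplicity statement is formal.
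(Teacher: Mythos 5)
Your overall framework is the right one, and it is also the paper's: restrict $\Theta_E$ to arcs through $M$ and prove a bound, uniform in the tangent direction $\alpha\in H^1(M^\vee\otimes M)$, on the first-order lifting behavior of sections, governed by $\cup^\vee\alpha\colon H^0(E\otimes M)\to H^1(E\otimes M)$. But both places where you state what that uniform bound should be are wrong, in a way that matters. What the multiplicity estimate needs is that many sections \emph{do} lift to first order for every $\alpha$: the vanishing order along an arc in direction $\alpha$ is at least $h^0(E\otimes M)+\dim\ker(\cup^\vee\alpha)$, so the lemma follows once one shows $\dim\ker(\cup^\vee\alpha)\ge h^0(L\otimes M)+h^1(F\otimes M)-h^0(E\otimes M)$ for all $\alpha$. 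You instead assert that the number of sections that \emph{fail} to lift is at least $h^0(L\otimes M)+h^1(F\otimes M)$ — backwards, since a plentiful supply of non-lifting sections is what makes the multiplicity small; it is also not "equivalent to the corank," which (the two spaces having equal dimension) is the number of sections that do lift. Later you claim $\operatorname{rank}(\cup^\vee\alpha)\le h^0(E\otimes M)-\bigl(h^0(L\otimes M)+h^1(F\otimes M)\bigr)$; this is off by $h^0(E\otimes M)$ and is literally impossible (the right side is negative) precisely when the lemma says more than the trivial bound $\textnormal{mult}_M\Theta_E\ge h^0(E\otimes M)$, e.g.\ in Laszlo's genus-$3$ example. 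Finally, the actual content — why the kernel is large — is only asserted ("cannot be killed simultaneously\ldots") and then explicitly deferred as "the heart of the proof." The paper supplies exactly this step by a diagram chase: $\ker(\cup^\vee\alpha)$ contains $K_{L\otimes M,\alpha}$ together with the preimage of $\bigl(\cup\alpha\, H^0(L\otimes M)\bigr)\cap\bigl(\cup^\vee e\, H^0(F\otimes M)\bigr)$ inside $H^1(L\otimes M)$, and that intersection is bounded below by elementary linear algebra plus $\chi(L\otimes M)+\chi(F\otimes M)=0$.

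That said, the block-matrix idea in your last paragraph can be completed, and it gives a cleaner argument than the paper's. The missing ingredient is the reason the extension forces a block of zeros: for $s\in H^0(L\otimes M)$ and $\tau\in H^0(F^\vee\otimes M^\vee\otimes K_C)\subseteq H^0(E^\vee\otimes M^\vee\otimes K_C)$, the cup product $s\cup\tau$ is computed through the contraction $L\otimes F^\vee\to E\otimes E^\vee\to\mathscr O_C$, which vanishes identically because it factors through the composite $L\to E\to F$, which is zero. Hence the $h^0(E\otimes M)\times h^1(E\otimes M)$ matrix of linear forms has an identically zero block of size $h^0(L\otimes M)\times h^1(F\otimes M)$, using $h^0(F^\vee\otimes M^\vee\otimes K_C)=h^1(F\otimes M)$ by Serre duality. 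A matrix with a $p\times q$ zero block has rank at most $(\textnormal{rows}-p)+(\textnormal{columns}-q)$, so for \emph{every} $\alpha$ one gets $\operatorname{rank}(\cup^\vee\alpha)\le\bigl(h^0(E\otimes M)-h^0(L\otimes M)\bigr)+\bigl(h^1(E\otimes M)-h^1(F\otimes M)\bigr)$, whence $\dim\ker(\cup^\vee\alpha)\ge h^0(L\otimes M)+h^1(F\otimes M)-h^0(E\otimes M)$ since $h^1(E\otimes M)=h^0(E\otimes M)$; this is exactly the uniform bound needed. With that correction your route is genuinely different from the paper's long-exact-sequence computation, and it has the virtue of separating the one geometric input (the zero block) from the formal multiplicity machinery.
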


\begin{proof}  Consider an extension of vector bundles
$$
0\rightarrow L \rightarrow E \rightarrow F \rightarrow 0.
$$
Denote by $\mathscr{M}$ a first order deformation
$$
0\rightarrow M \rightarrow \mathscr{M} \rightarrow M\rightarrow 0,
$$
corresponding to $\alpha\in H^1(M^\vee\otimes M)$.  We have a commutative diagram
\begin{equation}\label{eqnbig}
\begin{CD}
 @. 0 @. 0 @.0 @.\\
 @. @VVV @VVV @VVV\\
0 @>>> L \otimes M @>>> E\otimes M @>>>F\otimes M @>>>0\\
@. @VVV @VVV @VVV\\
0 @>>> L\otimes \mathscr{M} @>>> E\otimes \mathscr{M} @>>>F\otimes \mathscr{M} @>>>0\\
@. @VVV @VVV @VVV\\
0 @>>> L\otimes M @>>> E \otimes M @>>>F \otimes M@>>>0\\
@. @VVV @VVV @VVV\\
 @. 0 @. 0 @.0 @.\\
\end{CD}
\end{equation}
where all the rows and columns are exact.  Let $e\in H^1(F^\vee
\otimes L)$ be the extension class of $E$.   The diagram above gives
rise to a diagram of long exact sequences, of which the following is
a part

\begin{equation}\label{eqnsma}
\begin{CD}
0@>>> H^0(L\otimes \mathscr{M}) @>>> H^0(E \otimes \mathscr{M}) @>>>H^0(F \otimes \mathscr{M}) \\
@. @VVV @VVV @VVV\\
0 @>>> H^0 (L\otimes M) @>>> H^0 (E\otimes M) @>>>H^0(F\otimes M) \\
@. @V\cup^\vee \alpha VV @V\cup^\vee \alpha VV @V\cup^\vee \alpha VV\\
 @>\cup^\vee e >> H^1(L\otimes M) @>>> H^1 (E\otimes M) @>>>H^1(F\otimes M).\\
\end{CD}
\end{equation}

Set
$$
K_{L\otimes M,\alpha}=\ker\left(\cup^\vee \alpha:H^0(L\otimes M)\rightarrow H^1(L\otimes M)\right).
$$

We must have
$$
\dim \left(\cup^\vee \alpha H^0(L\otimes M) \cap \cup^\vee  e H^0(F\otimes M)\right)
\ge
$$
$$
\left(h^0(L\otimes M)-\dim K_{L\otimes M,\alpha}\right)+
$$
$$\left(h^0(F\otimes M)-h^0(E\otimes M)+h^0(L\otimes M)\right) -h^1(L\otimes M).
$$
Since $\chi (L\otimes M)+\chi (F\otimes M)=\chi (E\otimes M)=0$, we
get

$$
\dim \left(\cup^\vee \alpha H^0(L\otimes M) \cap \cup^\vee e H^0(F\otimes M)\right)
\ge
$$
$$
h^0(L\otimes M)-(h^0(E\otimes M)-h^1(F\otimes M))-\dim K_{L\otimes M,\alpha}.
$$

On the other hand, focusing on the subspace $H^0(L\otimes
M)\subseteq H^0(E\otimes M)$,  and chasing through the bottom left
hand corner of the diagram above, we have
$$h^0(E\otimes \mathscr M)\ge
$$
$$\dim K_{L\otimes M,\alpha}+\dim \left(\cup^\vee \alpha H^0(L\otimes M)
\cap \cup^\vee e H^0(F\otimes M)\right)+ h^0(E\otimes M),$$
where the last term comes from the inclusion $H^0(E\otimes M)\to H^0(E\otimes \mathscr M)$.
We conclude that
$h^0(E\otimes \mathscr M)\ge
h^0(L\otimes M)+h^1(F\otimes M)$.
\end{proof}

\begin{rem} \label{remex}
In light of these results, a  natural question to ask is the following.  Suppose
$\textnormal{mult}_M \Theta_E>h^0(E\otimes M)$.   Is it true that
there must exist an extension
$$
0\rightarrow L \rightarrow E \rightarrow F \rightarrow 0
$$
such that  $h^0(L\otimes M)+h^1(F\otimes M)>h^0(E\otimes M)$? It
turns out this is not the case. To show this, we turn to an example
of Laszlo\footnote{The first author learned of this example from
Mihnea Popa  who pointed out that the vector bundle $E$ can be
viewed as a so called Lazarsfeld bundle, i.e. the dual of the kernel
of the evaluation map $H^0(K_C)\otimes \mathscr O_C
\stackrel{ev}{\to} K_C$.} \cite[Proposition IV.9]{l}: let $C$ be a
smooth non-hyperelliptic curve of genus $3$, and let $E$ be the
unique rank two stable vector bundle on $C$ such that $h^0(E)=3$,
and $\det(E)=K_C$. Given any extension of vector bundles
$$
0\rightarrow L \rightarrow E \rightarrow K_C\otimes L^\vee \rightarrow 0,
$$
it follows that $\deg(L)<g-1=2$, and thus $h^0(L)\le 1$. We also have $h^1(K_C\otimes L^\vee)=h^0(L)\le 1$, so that
$h^0(L)+h^1(K_C\otimes L^\vee)<h^0(E)$.  Nevertheless, it is easy to see that $\Theta_E=C-C$, and thus
$$\textnormal{mult}_{\mathscr O_C}\Theta_E=4>h^0(E).
$$
In fact it is an observation of Laszlo \cite[Remark V.8]{l} that
whenever $\det (E)=K_C$ and $h^0(E)$ is odd,  then
$\operatorname{mult}_{\mathscr O_C}\Theta_E>h^0(E)$.  It would be
nice to know of examples where this strict inequality holds that do
not fall into one of the two cases above.
     \end{rem}

We next show that when $h^0(E\otimes M)\le 2$, this problem does not
arise.  We start  with the following discussion. Given $s\in
H^0(E\otimes M)$, there is a corresponding morphism $M^\vee
\rightarrow E$.  Let $\phi:M^\vee \otimes M\rightarrow E\otimes M$
be the morphism obtained by tensoring with $M$.  It is easy to check
that $1_M\in H^0(M^\vee \otimes M)$ maps to $s\in H^0(E\otimes M)$.

\begin{lem}
Let $E$ and $M$ be as above.  Let $s\in H^0(E\otimes M)$ be a section,
and let $\phi:M^\vee \otimes M\rightarrow E\otimes M$ be the associated morphism.  Then
$$\cup^\vee s =H^1(\phi):H^1(M^\vee \otimes M)\rightarrow H^1(E\otimes M).$$
     \end{lem}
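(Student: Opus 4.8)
The plan is to derive $\cup^\vee s = H^1(\phi)$ from the naturality of the connecting homomorphism, using the fact already recorded above that the dual cup product with an extension class is the corresponding connecting map. Recall the setup: the section $s$ corresponds to a morphism $\psi\colon M^\vee \to E$, and $\phi = \psi \otimes \mathrm{id}_M$, where $\phi$ sends $1_M \in H^0(M^\vee \otimes M)$ to $s$. Fix $\alpha \in H^1(M^\vee \otimes M)$ and let $\mathscr M$ be the first order deformation of $M$ it determines, i.e. the extension $0 \to M \to \mathscr M \to M \to 0$ of class $\alpha$. Tensoring this sequence by $M^\vee$ and by $E$ gives the two short exact sequences
\[
0 \to M^\vee \otimes M \to M^\vee \otimes \mathscr M \to M^\vee \otimes M \to 0, \qquad
0 \to E \otimes M \to E \otimes \mathscr M \to E \otimes M \to 0,
\]
and tensoring $\psi$ with the first sequence produces a morphism from the $M^\vee$-twisted sequence to the $E$-twisted one whose two outer vertical arrows are exactly $\phi = \psi \otimes \mathrm{id}_M$.

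Applying naturality of the connecting homomorphism to this morphism of short exact sequences yields the commutative square
\[
\begin{CD}
H^0(M^\vee \otimes M) @>\delta_0>> H^1(M^\vee \otimes M)\\
@VH^0(\phi)VV @VVH^1(\phi)V\\
H^0(E \otimes M) @>\delta_1>> H^1(E \otimes M).
\end{CD}
\]
Two standard inputs feed the ensuing diagram chase. First, $\delta_0(1_M) = \alpha$: since $M$ is locally free the top sequence is $\operatorname{Hom}(M,-)$ applied to the deformation, and the connecting map sends the identity to the class of the extension, namely $\alpha$. Second, by the lemma identifying connecting maps with dual cup products, the bottom connecting map $\delta_1$ of $0 \to E \otimes M \to E \otimes \mathscr M \to E \otimes M \to 0$ is precisely $\cup^\vee \alpha$, which is also how $\cup^\vee \alpha$ was defined in the theta-divisor setting.

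Chasing $1_M$ around the square then gives
\[
H^1(\phi)(\alpha) = H^1(\phi)\bigl(\delta_0(1_M)\bigr) = \delta_1\bigl(H^0(\phi)(1_M)\bigr) = \delta_1(s) = s \cup^\vee \alpha .
\]
Since $\alpha \in H^1(M^\vee \otimes M)$ was arbitrary, this is exactly the asserted equality of maps. The only real care needed is the bookkeeping with conventions: one must check that tensoring the class-$\alpha$ extension really produces sequences whose connecting maps are $1_M \mapsto \alpha$ and $\cup^\vee \alpha$ respectively, and that both outer vertical arrows in the morphism of sequences genuinely coincide with $\psi \otimes \mathrm{id}_M$. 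These are routine, since the identification of connecting homomorphisms with (dual) cup products is already in hand; I expect this matching of conventions, rather than any genuine computation, to be the only obstacle.
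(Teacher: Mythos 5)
Your proof is correct. The paper offers no argument here (its proof reads ``This is straight forward, and left to the reader''), and your route---identifying $\cup^\vee \alpha$ with the connecting homomorphism of the deformation sequence tensored with $E$, identifying $\alpha$ as the image of $1_M$ under the connecting map of the $M^\vee$-tensored sequence, and then applying naturality of connecting homomorphisms to the morphism of short exact sequences induced by $\psi\otimes \mathrm{id}$---is exactly the intended straightforward verification, consistent with the conventions the paper has already set up.
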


\begin{proof} Let $$ 0 \rightarrow M\rightarrow \mathscr M \rightarrow M \rightarrow 0 $$
 be a first order deformation of $M$, corresponding to the cocycle $\alpha\in H^1(M^\vee \otimes M)$.
 Then there is a commutative diagram
  $$ \begin{CD} 0 @.0 @.\\ @VVV @VVV\\  M^\vee \otimes M @>\phi>> E\otimes M  \\
   @VVV @VVV\\ M^\vee \otimes \mathscr{M} @>>> E\otimes \mathscr{M} \\  @VVV @VVV\\
    M^\vee\otimes M @>\phi>> E\otimes M\\  @VVV @VVV\\ 0 @.0. @. \\ \end{CD}$$
    Taking global sections we get
     $$ \begin{CD}  H^0(M^\vee \otimes M) @>H^0(\phi)>> H^0(E\otimes M)  \\ @VVV @VVV\\
      H^1(M^\vee\otimes M) @>H^1(\phi)>> H^1(E\otimes M).\\ \end{CD} $$
       By definition, $1_M\mapsto \alpha\in H^1(M^\vee\otimes M)$.
        On the other hand,  we have observed above that $H^0(\phi)(1_M)=s$, and thus $\alpha \cup^\vee s=H^1(\phi)(\alpha)$.
         \end{proof}

%DETAILED EXPLANATION OF THE NEXT LEMMA

Let $s\in H^0(E\otimes M)$ be a section, and $M^\vee \to E$ be the associated morphism.
 Then there is a commutative diagram
\begin{equation}\label{eqnextension} \begin{CD} @. M^\vee @>>> E@>>>E/M^\vee@>>>0\\ @. @VVV @| @VVV\\ 0 @>>> L @>>> E  @>>>F @>>>0\\ \end{CD} \end{equation}
 where $L$ and $F$ are vector bundles, and  $L/M^\vee$ is torsion. Tensoring by $M$, and taking cohomology,
  we get a diagram $$ \begin{CD} H^1(M^\vee\otimes M) @>\cup^\vee s>> H^1(E\otimes M)\\ @VVV @| \\
   H^1(L\otimes M) @>>> H^1(E \otimes M) @>>>H^1(F \otimes M)@>>>0\\ @VVV \\ 0\\ \end{CD} $$
   where the surjection on the left is due to the fact that $L/M^\vee$ is torsion.
    It is then clear that $\cup^\vee s:H^1(M^\vee \otimes M)\rightarrow H^1(E\otimes M)$
    is surjective if and only if $h^1(F\otimes M)=0$. Taking duals, we get

$$ \begin{CD} H^0(M^\vee\otimes M\otimes K_C) @<\cup^\vee s<< H^0(E^\vee \otimes M^\vee\otimes K_C)\\
 @AAA @| \\ H^0(L^\vee\otimes M^\vee\otimes K_C)
  @<<< H^0(E^\vee \otimes M^\vee \otimes K_C) @<<<H^0(F^\vee \otimes M^\vee \otimes K_C)\\ @AAA \\ 0\\ \end{CD},$$
where the map on the bottom right is an inclusion.
This proves the following lemma:

\begin{lem}\label{lem35} In the notation above,
$$\cup^\vee s :H^0(E^\vee \otimes M^\vee \otimes K_C)\rightarrow H^0(M^\vee \otimes M \otimes K_C)$$
is injective if and only if $H^0(F^\vee \otimes M^\vee \otimes
K_C)=0$,  and is zero if and only if $H^0(F^\vee \otimes M^\vee
\otimes K_C)=H^0(E^\vee \otimes M^\vee \otimes K_C)$.
     \end{lem}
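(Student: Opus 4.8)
The plan is to realize the dual cup product $\cup^\vee s$ as the map on global sections induced by a single morphism of sheaves, and then to read off its kernel from the defining sequence of $F$. First I would identify that morphism. The section $s$ determines $s\colon M^\vee\to E$, hence a dual morphism $s^\vee\colon E^\vee\to M$; let $\psi=s^\vee\otimes\mathrm{id}_{M^\vee\otimes K_C}\colon E^\vee\otimes M^\vee\otimes K_C\to M\otimes M^\vee\otimes K_C$. By the previous lemma the $H^1$-level dual cup product with $s$ is $H^1(\phi)$, where $\phi=s\otimes\mathrm{id}_M$, and by functoriality of Serre duality the map $\cup^\vee s$ in the statement is exactly the Serre dual of $H^1(\phi)$, namely $H^0(\psi)$. (Equivalently, one checks directly that contracting the cup product $s\cup t$ against the trace on $E\otimes E^\vee$ agrees with applying $\psi$ to $t$.)

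Next I would compute the kernel sheaf of $\psi$. Take $L=M^\vee$ to be the sub-bundle of $E$ determined by $s$, so that $s\colon M^\vee\hookrightarrow E$ is the subbundle inclusion and $F=E/M^\vee$ is the quotient in the extension $0\to L\to E\to F\to 0$. Dualizing this sequence of vector bundles gives
$$0\to F^\vee \to E^\vee \xrightarrow{\,s^\vee\,} M \to 0,$$
and tensoring with the locally free sheaf $M^\vee\otimes K_C$ preserves exactness, so $\ker\psi=F^\vee\otimes M^\vee\otimes K_C$. Since $H^0$ is left exact, passing to global sections yields $\ker(\cup^\vee s)=\ker H^0(\psi)=H^0(F^\vee\otimes M^\vee\otimes K_C)$, regarded as a subspace of $H^0(E^\vee\otimes M^\vee\otimes K_C)$.

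Both equivalences then follow formally: the map $\cup^\vee s$ is injective if and only if this kernel vanishes, i.e.\ if and only if $H^0(F^\vee\otimes M^\vee\otimes K_C)=0$; and it is the zero map if and only if its kernel is all of $H^0(E^\vee\otimes M^\vee\otimes K_C)$, i.e.\ if and only if $H^0(F^\vee\otimes M^\vee\otimes K_C)=H^0(E^\vee\otimes M^\vee\otimes K_C)$. The one step deserving genuine care, and which I would treat as the crux, is the sheaf-level identification $\ker(s^\vee)=F^\vee$: this is exactly where one uses that $s$ realizes $M^\vee$ as a sub-bundle, so that $s^\vee$ is surjective onto $M$ and $F$ is precisely its cokernel. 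Once that is pinned down, everything else is the formal left-exactness of $H^0$ together with exactness of tensoring by a locally free sheaf.
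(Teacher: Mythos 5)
The paper offers no argument for this lemma (its proof reads ``This is left to the reader''), so the only question is whether your argument is correct. Its skeleton is: your identification of $\cup^\vee s$ with $H^0(\psi)$ for the sheaf map $\psi = s^\vee\otimes\mathrm{id}_{M^\vee\otimes K_C}$ is right, as is the reduction, via left-exactness of $H^0$ and exactness of tensoring by a locally free sheaf, to computing $\ker(s^\vee)$. But the step you yourself single out as the crux is a genuine gap: you assume that the morphism $s\colon M^\vee\to E$ attached to a section $s\in H^0(E\otimes M)$ realizes $M^\vee$ as a subbundle of $E$, so that $L=M^\vee$, $F=E/M^\vee$ is locally free, and $s^\vee$ is surjective onto $M$. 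For an arbitrary nonzero section this fails in two ways: if $\operatorname{rank}(M)>\operatorname{rank}(E)$ (which the paper's setting allows, e.g.\ $E$ a line bundle and $M$ of higher rank) the map $M^\vee\to E$ cannot even be injective; and even when it is injective its image need not be saturated --- already for $M=\xi$ a line bundle, if $s$ vanishes on a nonzero effective divisor $D$ then the saturation of the image contains $\xi^\vee(D)\supsetneq\xi^\vee$, so $E/\xi^\vee$ has torsion and is not the quotient in an extension of vector bundles. The generality matters, because in the paper this lemma is applied to a section $s$ produced abstractly by the Eisenbud--Harris result, and its output must be an extension of vector bundles $0\to L\to E\to F\to 0$ as in Theorem \ref{teo2}.

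The repair is short, and it pins down what $F$ must mean (the paper never defines it explicitly): take $L\subseteq E$ to be the saturation of $\operatorname{im}(s\colon M^\vee\to E)$ and $F=E/L$, which is locally free. Then $s$ factors as $M^\vee\to L\hookrightarrow E$, so $s^\vee$ factors as the surjection $E^\vee\twoheadrightarrow L^\vee$ (dualizing $0\to L\to E\to F\to 0$ stays exact because $F$ is locally free) followed by $L^\vee\to M$, which is injective: factoring $M^\vee\to L$ through its image $N$, the dual maps $L^\vee\to N^\vee$ and $N^\vee\to M$ are both injective since $\mathscr{Hom}(T,\mathscr O_C)=0$ for $T$ torsion. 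Hence $\ker(s^\vee)=F^\vee$ with this corrected $F$, and the remainder of your argument goes through verbatim; note also that $s$ itself then lies in $H^0(L\otimes M)$, which is what the paper's subsequent lemma needs to conclude $h^0(L\otimes M)+h^1(F\otimes M)>h^0(E\otimes M)$.
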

\begin{proof}
This follows from the discussion above.
\end{proof}

\begin{pro}\label{proh02}
Let $E$, $M$, and $\Theta_E$ be as above.  If $h^0(E\otimes M)\le
2$, then  $\textnormal{mult}_M\Theta_E>h^0(E\otimes M)$ if and only
if there is an extension of vector bundles
$$
0\rightarrow L \rightarrow E \rightarrow F \rightarrow 0,
$$
such that $h^0(L\otimes M)+h^1(F\otimes M)>h^0(E\otimes M)$.
     \end{pro}
\begin{proof}  If there exists such an extension, then the conclusion of the lemma is a special case of Lemma
\ref{lemfirst}.  Conversely, if
$\textnormal{mult}_M\Theta_E>h^0(E\otimes M)$, we use the discussion
above to generalize the arguments in Laszlo \cite{l}.

Assume now $h^0(E\otimes M)=2$.  The case $h^0(E\otimes M)=1$ is easier, and can also be proven using the same argument.
 Consider the dual cup product map
 $$ H^0(E\otimes M)\otimes H^0(E^\vee \otimes M^\vee \otimes K_C)\rightarrow H^0(M\otimes M^\vee \otimes K_C), $$
 and, after choosing bases,  let $A$ be the associated two by two matrix, with entries in
  $H^0(M\otimes M^\vee \otimes K_C)=H^1(M\otimes M^\vee)^\vee$. We have seen in Lemma
  \ref{lemcupdet} that  $\textnormal{mult}_M\Theta_E>h^0(E\otimes M)$ if and only if the determinant of $A$
  is identically zero.

 It is a result of Eisenbud and Harris \cite{eh} that this determinant is zero if and only if there exist bases for
  $H^0(E\otimes M)$ and $ H^0(E^\vee\otimes M^\vee \otimes K_C)$ such that either a column or row of $A$ is zero.
   This implies that there is either a section $s\in H^0(E\otimes M)$ such that
   $\cup^\vee s : H^0(E^\vee\otimes M^\vee \otimes K_C) \rightarrow H^0(M\otimes M^\vee \otimes K_C)$
   is the zero map, or a section  $t\in H^0(E^\vee\otimes M^\vee\otimes K_C)$ such that
    $\cup^\vee t:  H^0(E\otimes M) \rightarrow H^0(M\otimes M^\vee \otimes K_C)$ is the zero map.

 In the former case, it follows from  Lemma \ref{lem35} that there is an extension constructed as above in (\ref{eqnextension}) such that $h^1(E\otimes M)=h^1(F\otimes M)$.
  Moreover, the image of $1_M\in H^0(M^\vee \otimes M)$ in $H^0(L\otimes M)$ is nonzero, so that  $h^0(L\otimes M)\ne 0$
  and thus $E$ has an extension as in the statement of the lemma.
The same argument shows that in the latter case, $E^\vee \otimes K_C$  has an extension as in the statement of the
lemma (with $M$ replaced by $M^\vee$); by Serre duality, these conditions are equivalent. \end{proof}

\begin{rem}
Theorem \ref{teo2} is a direct consequence of Proposition \ref{proh02} and Lemma \ref{lemfirst}.
\end{rem}

\subsection{Theta divisors in $\textnormal{Pic}^0(C)$}\label{sectd}
For generalized theta divisors on $\textnormal{Pic}^0(C)$, we can
make some higher order computations, which in  the case of $2\Theta$
divisors yield some generic results. Suppose that $E\in
\mathscr{U}(r,r(g-1))$ and $\xi\in \textnormal{Pic}^0(C)$. From the
previous section,
$$
\textnormal{mult}_\xi \Theta_E \ge h^0(E\otimes \xi),
$$
and $\textnormal{mult}_\xi \Theta_E>h^0(E\otimes \xi)$ if there exists an extension of vector bundles
$$
0\rightarrow L \rightarrow E\otimes \xi \rightarrow F\rightarrow 0
$$
such that $h^0(L)+h^1(F)>h^0(E\otimes \xi)$.  Moreover, if
$h^0(E\otimes \xi)\le 2$, then  $\textnormal{mult}_\xi
\Theta_E>h^0(E\otimes \xi)$ if and only if such an extension exists.

In the rank two case, we can say more due to the fact that $L$ and
$F$ are line bundles.  In fact, the properties  of $L$ and $F$ that
we will need may hold for generic vector bundles of higher rank, and
so we give these conditions a name.  Roughly speaking, we will say
that a vector bundle is a \emph{first order deformation vector
bundle} if it behaves like a line bundle.  Recall that for a vector
bundle $E$ on $C$, and $\alpha\in H^1(\mathscr O_C)$, we set
$K_{E,\alpha}=\ker \left( \cup \alpha:H^0(E)\to H^1(E)\right)$.

\begin{dfn}\label{dfnfodvb}
Let $E$ be a vector bundle of slope $\mu(E)=\mu$ on a smooth curve $C$ of genus $g\ge 2$.
\begin{enumerate}
\item If $\mu(E)=g-1$, then $E$ is a \emph{first order deformation vector bundle} if there exists an
 $\alpha \in H^1(\mathscr O_C)$ such that $\cup \alpha:H^0(E)\to H^1(E)$ is injective.

\item If $\mu(E)<g-1$, then $E$ is a \emph{first order deformation vector bundle} if $h^0(E)>0$,
 and given $W\subseteq H^1(E)$, with
$$
h^0(E)+\dim W\le h^1(E),
$$
there exists $\alpha\in H^1(\mathscr O_C)$ such that
$\cup \alpha:H^0(E)\to H^1(E)$ is injective and
$(\alpha\cup H^0(E))\cap W=0$.

\item If $\mu(E)>g-1$ then $E$ is a \emph{first order deformation vector bundle} if $h^1(E)>0$
 and given $W^\perp\subseteq H^0(E)$, with
$$
\dim W^\perp\le \chi (E),
$$
there exists $\alpha\in H^1(\mathscr O_C)$ such that
$\dim K_{E,\alpha}=\chi(E)$ (i.e. $\cup \alpha$ is surjective), and
$K_{E,\alpha}\cap W^\perp=0$.
\end{enumerate}
\end{dfn}

\begin{rem}
It is easy to check that if $E$ is a first order deformation vector
bundle, then $K_C\otimes E^\vee$ is as well.
\end{rem}

\begin{lem} \label{lemfodvb}
A line bundle $E$ on $C$ is a first order deformation vector bundle.
     \end{lem}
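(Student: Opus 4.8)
The plan is to reduce all three cases of the definition to a single statement about bilinear forms, phrased via Serre duality. Writing $h=h^0(L)$, recall that for $\alpha\in H^1(\mathscr O_C)$ the cup product $\cup\alpha\colon H^0(L)\to H^1(L)$ is, under the Serre duality identification $H^1(L)\cong H^0(K_C\otimes L^\vee)^\vee$ set up in the preliminaries, the map $s\mapsto\big(t\mapsto\alpha(s\cdot t)\big)$, where $s\cdot t\in H^0(K_C)$ is the product and $\alpha$ is viewed as a functional on $H^0(K_C)$. Thus $\cup\alpha$ is encoded by the bilinear form $B_\alpha(s,t)=\alpha(s\cdot t)$ on $H^0(L)\times H^0(K_C\otimes L^\vee)$. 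The core statement I would isolate is: for every subspace $W'\subseteq H^0(K_C\otimes L^\vee)$ with $\dim W'\ge h$, there exists $\alpha$ for which the restriction of $B_\alpha$ to $H^0(L)\times W'$ has trivial left kernel.

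To prove this core statement I would show that a general $\alpha$ works, by a dimension count. The restricted form $B_\alpha|_{H^0(L)\times W'}$ has nontrivial left kernel exactly when some nonzero $s\in H^0(L)$ satisfies $\alpha\perp(s\cdot W')$, i.e. $\alpha\in(s\cdot W')^\perp\subseteq H^1(\mathscr O_C)$. The key observation is that multiplication by a nonzero section $s$ of the line bundle $L$ is an injection $H^0(K_C\otimes L^\vee)\hookrightarrow H^0(K_C)$ (the sheaf map $\mathscr O_C\to L$ determined by $s$ is injective), so $\dim(s\cdot W')=\dim W'\ge h$ and hence each $(s\cdot W')^\perp$ has dimension at most $g-h$. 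The union of these subspaces over $[s]\in\mathbb P H^0(L)$ is the image of an incidence variety of dimension at most $(h-1)+(g-h)=g-1$, hence a proper subvariety of $H^1(\mathscr O_C)\cong\mathbb C^g$. A general $\alpha$ therefore avoids it.

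I would then deduce the three cases. When $\mu(L)=g-1$ one has $\chi(L)=0$, so $\dim H^0(K_C\otimes L^\vee)=h^1(L)=h$; taking $W'=H^0(K_C\otimes L^\vee)$, the core statement produces $\alpha$ with $\cup\alpha$ injective. When $\mu(L)<g-1$ and a subspace $W\subseteq H^1(L)$ with $h+\dim W\le h^1(L)$ is given, I would let $W'$ be the annihilator of $W$ in $H^0(K_C\otimes L^\vee)$, so that $\dim W'=h^1(L)-\dim W\ge h$; unwinding Serre duality, triviality of the left kernel of $B_\alpha|_{H^0(L)\times W'}$ says exactly that the composite $H^0(L)\xrightarrow{\cup\alpha}H^1(L)\to H^1(L)/W$ is injective, which is precisely the assertion that $\cup\alpha$ is injective and $(\alpha\cup H^0(L))\cap W=0$. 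Finally, the case $\mu(L)>g-1$ I would obtain from the case $\mu<g-1$ applied to $K_C\otimes L^\vee$ (which has slope $2g-2-\deg L<g-1$ and $h^0(K_C\otimes L^\vee)=h^1(L)>0$), together with the preceding remark that $K_C\otimes E^\vee$ is a first order deformation vector bundle whenever $E$ is.

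I do not foresee a serious obstacle: the only real content is the reformulation via Serre duality and the injectivity of multiplication by a nonzero section, after which the statement becomes a transversality assertion settled by the dimension count above. The point requiring the most care is the Serre-duality bookkeeping, so that "trivial left kernel of $B_\alpha$ on $H^0(L)\times W'$" is correctly matched with the intrinsic statements "$\cup\alpha$ injective" and "$\operatorname{image}(\cup\alpha)\cap W=0$"; in particular one must check that the constraint $\dim W\le h^1(L)-h$ translates into $\dim W'\ge h$, which is exactly what makes the dimension count close.
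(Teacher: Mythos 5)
Your proof is correct, but it follows a genuinely different route from the paper's. The paper treats case (1) of the definition as clear, uses the remark that $E$ being a first order deformation vector bundle implies the same for $K_C\otimes E^\vee$ to reduce everything to case (3), and proves case (3) geometrically via Lemma \ref{lemcomp}: taking $\alpha$ to be the class induced by a general effective divisor $D$ of degree $h^1(E)$, the kernel $K_{E,\alpha}$ is identified with $H^0(E(-D))$ (as in the proof of Lemma \ref{lemsecorder}), which for a line bundle and general $D$ has dimension $\chi(E)$ and avoids any fixed subspace of complementary dimension. You run the duality reduction in the opposite direction — cases (1) and (2) directly, case (3) from (2) via the remark — and your direct argument is linear-algebraic rather than divisorial: a single statement about the forms $B_\alpha(s,t)=\alpha(s\cdot t)$, proved by an incidence-variety dimension count over the whole of $H^1(\mathscr O_C)$. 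The line-bundle hypothesis enters both arguments at the analogous point but in different guises: in the paper it is that general points impose independent conditions on sections of a line bundle, while for you it is that multiplication by a nonzero section is injective on $H^0(K_C\otimes L^\vee)$. Your version buys the slightly stronger conclusion that a Zariski-general $\alpha$ works — which is in fact the form in which the property is invoked in Lemma \ref{lemfirstorder} — and it handles (1) and (2) uniformly; the paper's version buys concreteness (the good classes $\alpha$ and their kernels $H^0(E(-D))$ are exhibited explicitly) and re-uses machinery that is needed anyway for the second-order computations later in the paper.
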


\begin{proof}
As (2) and (3) are dual to each other,  it is enough to prove (2).
This can be done using Lemma \ref{lemcomp},  in the special case of
a general effective divisor $D$ of degree $h^1(E)$. The details are
left to the reader. The proof of (1) is similar.  See for example \cite[Theorem 1.9]{cmf} where this computation is done in detail.
\end{proof}

\begin{lem}\label{lemfirstorder}
Suppose that $E\in \mathscr{U}(r,r(g-1))$, $\xi\in
\textnormal{Pic}^0(C)$, and there exists an extension of  vector
bundles
$$
0\rightarrow L \rightarrow E\otimes \xi \rightarrow F\rightarrow 0,
$$
such that $h^0(L)+h^1(F) \ge h^0(E\otimes \xi)$.  If $L$ and $F$ are
first order deformation vector bundles, then for a general
$\alpha\in H^1(\mathscr O_C)$, corresponding to a first order
deformation $\mathscr E'$ of $E\otimes \xi$, it follows that
$h^0(\mathscr E')=h^0(L)+h^1(F)$.  In particular, this is true when
$E$ has rank two.
     \end{lem}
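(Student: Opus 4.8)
The plan is to compute $h^0(\mathscr E')$ directly from first order deformation theory and reduce the statement to a generic linear algebra computation with cup products. Write $G=E\otimes\xi$; since $G\in\mathscr U(r,r(g-1))$ we have $\chi(G)=0$ and hence $h^0(G)=h^1(G)$. A first order deformation $\mathscr E'$ of $G$ along $\alpha\in H^1(\mathscr O_C)$ fits into $0\to G\to\mathscr E'\to G\to 0$, whose connecting map is the dual cup product $\cup^\vee\alpha\colon H^0(G)\to H^1(G)$ (the infinitesimal deformation lemma from Section 1). Taking cohomology gives $h^0(\mathscr E')=h^0(G)+\dim\ker(\cup^\vee\alpha|_{H^0(G)})$, so the assertion is equivalent to showing that for general $\alpha$ this kernel has dimension exactly $h^0(L)+h^1(F)-h^0(G)$. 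By Lemma \ref{lemfirst} the kernel has dimension at least this value for every $\alpha$ (and the value is $\ge 0$ by hypothesis), so only the upper bound for general $\alpha$ remains.

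To obtain the upper bound, I would tensor the extension $0\to L\to G\to F\to 0$ and run the snake lemma. Applying $\cup^\vee\alpha$ vertically to the cohomology ladder of this extension, and setting $A=\operatorname{image}(H^0(G)\to H^0(F))$ and $B=\ker(H^1(L)\to H^1(G))$, splits the ladder into two short exact sequences $0\to H^0(L)\to H^0(G)\to A\to 0$ and $0\to H^1(L)/B\to H^1(G)\to H^1(F)\to 0$. The snake lemma then yields $\dim\ker(\cup^\vee\alpha|_{H^0(G)})=\dim\ker\bar b_L+\dim\ker\tilde b_F-\dim\operatorname{image}\delta$, where $\bar b_L\colon H^0(L)\to H^1(L)/B$ and $\tilde b_F=\cup^\vee\alpha|_A\colon A\to H^1(F)$ are induced by cup product and $\delta$ is the connecting map. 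From the long exact sequence one computes $\dim A=h^0(G)-h^0(L)$ and $\dim B=h^0(L)+h^0(F)-h^0(G)$.

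Next I would choose $\alpha$ generic using the first order deformation hypotheses, which apply because stability of $G$ forces $\mu(L)<g-1<\mu(F)$. For $F$ (case (3) of the definition), since $\dim A=h^0(G)-h^0(L)\le h^1(F)$ by hypothesis, I can arrange $\dim K_{F,\alpha}=\chi(F)$ and $K_{F,\alpha}\cap A=0$; then $\ker\tilde b_F=A\cap K_{F,\alpha}=0$, so $\operatorname{image}\delta=0$ as well. For $L$ (case (2)), the naive condition $h^0(L)+\dim B\le h^1(L)$ fails in the strict case, so instead I would apply the hypothesis to a chosen subspace $B'\subseteq B$ of dimension $h^1(L)-h^0(L)=\chi(F)$ (such a $B'$ exists since $\dim B\ge\chi(F)$, again by hypothesis). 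Case (2) produces $\alpha$ with $\cup^\vee\alpha$ injective on $H^0(L)$ and $\operatorname{image}(\cup^\vee\alpha|_{H^0(L)})\cap B'=0$; since $\dim\operatorname{image}(\cup^\vee\alpha|_{H^0(L)})+\dim B'=h^1(L)$, these subspaces are complementary, which forces $\dim\bigl(\operatorname{image}(\cup^\vee\alpha|_{H^0(L)})\cap B\bigr)=h^0(L)+h^1(F)-h^0(G)$, hence $\dim\ker\bar b_L=h^0(L)+h^1(F)-h^0(G)$. Each of these is an open, nonempty, hence dense, condition on $\alpha\in H^1(\mathscr O_C)$, so a general $\alpha$ satisfies both; substituting into the snake lemma formula gives $\dim\ker(\cup^\vee\alpha|_{H^0(G)})=h^0(L)+h^1(F)-h^0(G)$, and therefore $h^0(\mathscr E')=h^0(L)+h^1(F)$. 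In the rank two case $L$ and $F$ are line bundles, which are first order deformation vector bundles by the earlier lemma, so the hypotheses hold automatically.

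The main obstacle is the coordination in the last step: the first order deformation property of $L$ as literally stated kills an intersection with a subspace only when a dimension inequality holds, and that inequality fails exactly in the interesting strict case $h^0(L)+h^1(F)>h^0(G)$. The key idea is to apply the property not to all of $B$ but to a subspace $B'$ of the maximal allowed dimension $\chi(F)$, which pins the generic intersection $\operatorname{image}(\cup^\vee\alpha|_{H^0(L)})\cap B$ at its expected value rather than at zero. Verifying that the two genericity conditions (for $L$ and for $F$) are simultaneously met by a single general $\alpha$, and that the dimension counts for $A$ and $B$ match, is where the care is needed.
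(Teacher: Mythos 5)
Your proof is correct and follows essentially the same route as the paper's: you reduce to computing $\dim\ker\bigl(\cup^\vee\alpha|_{H^0(E\otimes\xi)}\bigr)$ from the cohomology ladder of the extension, and then use the first order deformation properties of $L$ and $F$ to pin that kernel at $h^0(L)+h^1(F)-h^0(E\otimes\xi)$ for general $\alpha$ --- precisely the two conditions ($\dim\bigl(\cup\alpha H^0(L)\cap \cup^\vee e\, H^0(F)\bigr)=h^0(L)+h^1(F)-h^0(E\otimes\xi)$ and $A\cap K_{F,\alpha}=0$) that the paper extracts from the proof of Lemma \ref{lemfirst}. Your device of applying property (2) of the definition to a subspace $B'\subseteq B$ of dimension $\chi(F)$, rather than to all of $B$ (where the dimension hypothesis fails exactly in the strict case), is a careful filling-in of a step the paper disposes of with only its terse dimension-count observation.
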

\begin{proof}  Set $E'=E\otimes \xi$.
Since $E$ is stable, $L$ is a vector bundle of slope less than $g-1$.
Thus taking $\alpha$ general,  $K_{L,\alpha}=0$.  The proof of Lemma
\ref{lemfirst} shows that in order to prove the lemma above, it
suffices to establish that $\dim(\cup \alpha H^0(L)) \cap (\cup^\vee
e H^0(F))=h^0(L)+h^1(F)-h^0(E')$, and $(H^0(E')/H^0(L)) \cap
K_{F,\alpha}=0$.  These two statements follow from the definition of
first order deformation vector bundle and the observation that
$h^0(L)+(h^0(F)-h^0(E')+h^0(L))-h^1(L)=h^0(L)+h^1(F)-h^0(E')$, and
$h^0(E')-h^0(L)\le h^1(F)$.
\end{proof}

We now turn our attention to second order deformations.  We have the following technical lemma:

\begin{lem}\label{lemtec}
Suppose that $E\in \mathscr{U}(r,r(g-1))$, $\xi\in \textnormal{Pic}^0(C)$, and there exists an extension of vector bundles
$$
0\rightarrow L \rightarrow E\otimes \xi \rightarrow F\rightarrow 0,
$$
corresponding to $e\in Ext^1(F,L)$ such that
$h^0(L)+h^1(F)>h^0(E\otimes \xi)$,  and $L$ and $F$ are first order deformation vector bundles.
If there exists $\alpha\in H^1(\mathscr{O}_C)$, such that
$(\cup^\vee e K_{F,\alpha}) \cap (\cup^\vee \alpha H^0(L))=0\subseteq H^1(L)$, then
$\textnormal{mult}_\xi \Theta_E=h^0(L)+h^1(F)$.
In particular, this is true if $E$ has rank two.
     \end{lem}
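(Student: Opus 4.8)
The plan is to compute the multiplicity along a well-chosen one-parameter arc and match it against the lower bound already available from Theorem \ref{teo2}. Write $E'=E\otimes\xi$, identify $T_\xi\operatorname{Pic}^0(C)=H^1(\mathscr{O}_C)$, and for $\alpha\in H^1(\mathscr{O}_C)$ let $\mathscr{E}'_k$ be the $k$-th order deformation of $E'$ along $\alpha$. By Lemma \ref{lemfirst} (the extension in the statement) we already have $\textnormal{mult}_\xi\Theta_E\ge h^0(L)+h^1(F)$, so it is enough to exhibit a single smooth arc $S$ through $\xi$ with tangent $\alpha$ along which $\textnormal{mult}_s(\Theta_E|_S)=h^0(L)+h^1(F)$: since the multiplicity of a divisor restricted to a smooth curve always dominates the multiplicity of the divisor, this forces equality. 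By the stabilization lemmas for theta divisors on a smooth curve, $\textnormal{mult}_s(\Theta_E|_S)=\ell((R^1\pi_{2*}\mathscr{E})_s)$ equals the eventually constant value of $h^0(\mathscr{E}'_k)$, and their ``moreover'' clause reduces the entire computation to showing that this constant is already reached at $k=1$.

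Let $V_j\subseteq H^0(E')$ be the space of sections that lift to order $j$. From the sequences \eqref{eqndef} (taking $a=j-1$, $b=0$) one gets $h^0(\mathscr{E}'_k)=h^0(E')+\sum_{j=1}^k\dim V_j$ with $V_1\supseteq V_2\supseteq\cdots$. I would take $\alpha$ satisfying the intersection hypothesis and general enough that Lemma \ref{lemfirstorder} applies (the first order deformation bundle properties of $L$ and $F$ supply this genericity); then Lemma \ref{lemfirstorder} gives $h^0(\mathscr{E}'_1)=h^0(L)+h^1(F)$, i.e. $\dim V_1=h^0(L)+h^1(F)-h^0(E')$. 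Everything then rests on the single claim $V_2=0$: no first-order-liftable section lifts to second order. Granting it, $V_j=0$ for all $j\ge2$, the length stabilizes at $k=1$, and $\textnormal{mult}_s(\Theta_E|_S)=h^0(\mathscr{E}'_1)=h^0(L)+h^1(F)$, as required.

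To prove $V_2=0$ I would extend the diagrams \eqref{eqnbig} and \eqref{eqnsma} from the first-order deformation to the second-order deformation and pin down the secondary obstruction. From $0\to\mathscr{E}'_1\xrightarrow{t}\mathscr{E}'_2\to E'\to0$ one gets a connecting map $\partial_2\colon H^0(E')\to H^1(\mathscr{E}'_1)$ with $\ker\partial_2=V_2$, whose composite with $H^1(\mathscr{E}'_1)\to H^1(E')$ is $\cup\alpha$; hence on $V_1=\ker(\cup\alpha)$ the obstruction $\partial_2$ takes values in the quotient $H^1(E')/\operatorname{image}(\cup\alpha)$ and is a secondary cup product. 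Using the extension $0\to L\to E'\xrightarrow{p}F\to0$ together with Lemma \ref{lemcomp}, I would compute this secondary operation on $s\in V_1$ with $\bar s=p(s)\in K_{F,\alpha}$ inside $H^1(L)$ as $\cup e\,\bar s$ modulo $\cup\alpha\,H^0(L)$ (the ambiguity $\cup\alpha\,H^0(L)$ coming from the freedom in the $L$-component of a first-order lift). Thus $\partial_2|_{V_1}$ vanishes on $s$ exactly when $\cup e\,\bar s\in\cup\alpha\,H^0(L)$; the hypothesis $(\cup e\,K_{F,\alpha})\cap(\cup\alpha\,H^0(L))=0$ then forces $\cup e\,\bar s=0$, and combining this with the first-order genericity of Lemma \ref{lemfirstorder} ($K_{L,\alpha}=0$ and $H^0(E')/H^0(L)\cap K_{F,\alpha}=0$) forces $s=0$. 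This identification—carrying the first-order chase of Lemma \ref{lemfirst} to genuine second order and matching the secondary obstruction with the stated intersection in $H^1(L)$—is the main obstacle, since one must track the choices of first-order lifts carefully to verify that no further terms survive.

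For the rank two ``in particular'' statement, $L$ and $F$ are line bundles, hence first order deformation vector bundles, and I would verify that a general $\alpha$ meets the hypothesis by a dimension count. Because $E'$ is stable of slope $g-1$ one has $\mu(L)<g-1<\mu(F)$, so for general $\alpha$ the map $\cup\alpha$ is injective on $H^0(L)$ and $\dim K_{F,\alpha}=\chi(F)$; since $\chi(E')=0$ gives $\chi(L)+\chi(F)=0$, one finds $\dim(\cup\alpha\,H^0(L))+\dim(\cup e\,K_{F,\alpha})=h^1(L)$, so the two subspaces of $H^1(L)$ must meet transversally. The first order deformation bundle properties of $L$ and $F$ then let one select an $\alpha$ realizing the trivial intersection while simultaneously satisfying the genericity required by Lemma \ref{lemfirstorder}, completing the rank two case.
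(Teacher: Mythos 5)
Your global strategy coincides with the paper's: restrict to an arc with tangent $\alpha$, invoke the stabilization lemmas so that the multiplicity along the arc is the eventually constant value of $h^0(\mathscr{E}'_k)$, get $h^0(\mathscr{E}'_1)=h^0(L)+h^1(F)$ from Lemma \ref{lemfirstorder}, and reduce everything to the claim $V_2=0$. The gap is in your identification of the secondary obstruction, and it is fatal as written. You evaluate the obstruction of $s\in V_1$ as $\cup e\,\bar s$ modulo $\cup\alpha\,H^0(L)$, where $\bar s=p(s)$. But $\operatorname{image}(p)=\ker(\cup e)$ by exactness of $H^0(E')\stackrel{p}{\to} H^0(F)\stackrel{\cup e}{\to}H^1(L)$, so $\cup e\,\bar s=0$ identically; your criterion therefore declares every element of $V_1$ unobstructed, and the hypothesis $(\cup e\,K_{F,\alpha})\cap(\cup\alpha\,H^0(L))=0$ is never genuinely used. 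This error is then compensated by an invalid final deduction: from $\bar s=0$, i.e.\ $s\in H^0(L)$, together with $K_{L,\alpha}=0$, you conclude $s=0$. That implication is false: $s\in V_1$ means $s\cup\alpha=0$ in $H^1(E')$, which for $s\in H^0(L)$ only says that $s\cup\alpha$, computed in $H^1(L)$, lies in $\ker\bigl(H^1(L)\to H^1(E')\bigr)=\cup e\,H^0(F)$; it does not say $s\cup\alpha=0$ in $H^1(L)$. Indeed, the space of such $s$ is the preimage under the injective map $\cup\alpha|_{H^0(L)}$ of $(\cup\alpha\,H^0(L))\cap(\cup e\,H^0(F))$, which by the proof of Lemma \ref{lemfirstorder} has dimension exactly $h^0(L)+h^1(F)-h^0(E')>0$. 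Taken at face value, your chain of deductions proves $V_1=0$, contradicting $h^0(\mathscr{E}'_1)=h^0(L)+h^1(F)>h^0(E')$; so at least one step had to fail, and it is these two.

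The object that actually carries the second-order information --- and this is the paper's diagram chase --- is not $p(s)$ but the $F$-component of a first-order lift: since $p(s)=0$ (for $\alpha$ general, $V_1\subseteq H^0(L)$), a lift $\tilde s\in H^0(E'\otimes\mathscr{M}_1)$ has image $p_*\tilde s=t\sigma$ for a section $\sigma\in H^0(F)$, well defined modulo $p(H^0(E'))$, and a cocycle computation gives $\cup e\,\sigma=s\cup\alpha$ computed in $H^1(L)$ (up to sign), so this class lies in $\cup\alpha\,H^0(L)$ automatically. If $\tilde s$ lifts to second order, then $t\sigma$ lifts to second order in $F\otimes\mathscr{M}_2$, which happens only if $\sigma\in K_{F,\alpha}$, since a first-order lift of the zero section of $F$ is $t$ times a section of $F$ and lifts further only if that section lifts to first order. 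Hence the obstruction lies in $(\cup e\,K_{F,\alpha})\cap(\cup\alpha\,H^0(L))=0$, so $s\cup\alpha=0$ in $H^1(L)$, i.e.\ $s\in K_{L,\alpha}=0$: this is where the hypothesis and $K_{L,\alpha}=0$ enter, and in that order. Separately, your rank-two paragraph is also incomplete: the fact that $\cup\alpha\,H^0(L)$ and $\cup e\,K_{F,\alpha}$ have complementary dimensions in $H^1(L)$ does not by itself imply they meet trivially for general $\alpha$; some further genericity argument is required (in the paper this is the content of the lemma that follows, where $e$, and not just $\alpha$, is taken to be general).
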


\begin{proof}
The key observation is from the proof of the previous lemma, where
we showed that if $\alpha\in H^1(\mathscr{O}_C)$  is general, then
$(H^0(E\otimes \xi)/H^0(L)) \cap K_{F,\alpha}=0$. In other words,
for a general $\alpha\in H^1(\mathscr{O}_C)$, the sections of
$E\otimes \xi$ which lift to first order are a subspace of
$H^0(L)\subseteq H^0(E\otimes \xi)$. To prove the lemma, we must
show that nontrivial sections of $E\otimes \xi$ do not lift to
second order for a general second order deformation (Lemmas
\ref{lem19}, \ref{lem18}, and \ref{lem11}). So let $\mathscr{M}_2$
be a general  second order deformation of the trivial bundle:
$$
0\rightarrow \mathscr{O}_C\rightarrow \mathscr{M}_2\rightarrow \mathscr{M}_1 \rightarrow 0.
$$

Again setting $E\otimes \xi=E'$, we have a commutative diagram similar to (\ref{eqnbig})
which gives rise to a diagram of long exact sequences, of which the following is a part

$$
\begin{CD}
0@>>> H^0(L\otimes \mathscr{M}_2) @>>> H^0(E'\otimes\mathscr{M}_2) @>>>H^0(F \otimes \mathscr{M}_2) \\
@. @VVV @VVV @VVV\\
0@>>> H^0(L\otimes \mathscr{M}_1) @>>> H^0(E'\otimes\mathscr{M}_1) @>>>H^0(F \otimes \mathscr{M}_1) \\
@. @VVV @VVV @VVV\\
 @>>> H^1(L) @>>> H^1 (E') @>>>H^1(F). \\
\end{CD}
$$

It follows that if a section of $H^0(E'\otimes \mathscr{M}_1)$ lifts
to second order, then its image  in $H^0(F\otimes \mathscr{M}_1)$
lifts to second order as well. Now, as we observed at the beginning,
a first order lift of a section of $E'$ maps to a first order lift
of the zero section in $H^0(F)$. A first order lift of the zero
section of $H^0(F)$ is just a section of $F$ (times $t$, where $t$
is a local parameter for the deformation), and thus will lift to
second order only if it is in the space of sections of $H^0(F)$
which lift to first order. In other words,  we have an induced map
$\phi_\alpha: H^0(E'\otimes \mathscr{M}_1)\rightarrow t\cdot
H^0(F)$. We also have the space $K_{F,\alpha}=\ker(\cup^\vee
\alpha:H^0(F)\rightarrow H^1(F))$, and we would like to give a
condition for $\operatorname{im}(\phi_\alpha)\cap K_{F,\alpha}=0$.

Extending diagram (\ref{eqnsma}) to the upper right
we have
$$
\begin{CD}
 @.  @. H^0(L)\\
 @. @. @V\cup^\vee \alpha VV\\
 H^0 (E') @>>>H^0(F) @>\cup^\vee e >>H^1(L)\\
@V\cdot t VV @V\cdot t VV @VVV\\
 H^0(E'\otimes \mathscr{M}_1) @>>>H^0(F\otimes \mathscr{M}_1) @>>> H^1(L\otimes \mathscr{M}_1)\\
@VVV @VVV @VVV\\
 H^0 (E') @>>>H^0(F) @>>> H^1(L)\\
\end{CD}
$$
and we see that $\cup^\vee e \circ \phi_\alpha H^0(\mathscr{E}') \subseteq \cup^\vee \alpha H^0(L)$.

Again using the fact that
$(H^0(E')/H^0(L)) \cap K_{F,\alpha}=0$, and
chasing through the top row of the diagram, we see that if $\cup^\vee e K_{F,\alpha}$ intersects trivially with
$\cup^\vee \alpha H^0(L)$, then
$\operatorname{im}(\phi_\alpha)\cap K_{F,\alpha}=0$.  Consequently, nontrivial sections of $H^0(E')$ can not lift to second order, completing the proof.
\end{proof}

We will use this to find the multiplicity in the general situation
for rank two vector bundles.   The idea is that the conditions of
the lemma should hold for a general such extension. In order to
state the results, we will want to introduce a parameter space of
vector bundles, so that we can make sense of the term ``general''.
So define a set
$$
B\mathscr U(2,2(g-1)):=
\bigcup_{0\le d<g-1}
\{(L,F,e): L\in B^1_d, \ K_C\otimes F^\vee \in B^1_d,\ e \in
H^1(F^\vee \otimes L)\}.
$$

A result of Laszlo, \cite{l} Proposition V.4, states that given
$(L,F,e)\in B\mathscr U$,
 if $e$ is general, then the extension $E$ corresponding to $e$ is stable, and,
either $h^0(L)=h^0(E)$ or $h^1(E)=h^1(F)$, so that
$h^0(L)+h^1(F)> h^0(E)>0$.  Let $\mathscr B\mathscr U$ be the subset with the property that
$h^0(L)+h^1(F)>h^0(E)>0$.

Thus for each point in $\mathscr {BU}$ we get a vector bundle together with certain extension data.  Now suppose conversely that
$E\in \mathscr{U}(2,2(g-1))$, and there exists an extension of vector bundles
$$
0\rightarrow L \rightarrow E \rightarrow F\rightarrow 0,
$$
such that $h^0(L)+h^1(F) > h^0(E)>0$.  Since both $h^0(L)$ and
$h^1(F)$ are at most equal to $h^0(E)$,  it is clear that both are
nonzero.
Thus $\mathscr B\mathscr U$ is equal to the set of stable
vector bundles $E$ together with an extension
$$
0\rightarrow L \rightarrow E \rightarrow F\rightarrow 0,
$$
with $L\in B^1_{1,d},\ K\otimes F^{\vee}\in B^1_{1,d},$ such that
$h^0(L)+h^1(F)> h^0(E)>0$.

We can check in fact that the general vector bundle $E$ admitting such an extension comes from a unique
such extension: Assume the opposite, namely that for such an
extension, there exists another extension
$$
0\rightarrow L' \rightarrow E \rightarrow F'\rightarrow 0,
$$
with say $d':=\deg (L')\ge \deg (L)=d$. The map $L'\rightarrow E$ cannot
factor through $L$ and therefore gives rise to a non-zero map
$L'\rightarrow F$. We then have a commutative diagram
$$
\begin{matrix}
0& \rightarrow &L &\rightarrow& E\times_F L' &\rightarrow &L' &\rightarrow&0 \\
& & \downarrow& &\downarrow & &\downarrow&  & \\
0&\rightarrow& L &\rightarrow& E &\rightarrow&F&\rightarrow&0 \\
\end{matrix}
$$
 where the top row is obtained by pull-back. The map $L'\rightarrow F$
 identifies the image of $L'$ to $F(-D)$. As it
 comes from a map $L'\rightarrow E$, the top row is split. Hence, the map
 $$\psi :H^1(F^{\vee}\otimes L)\rightarrow H^1(L^{'\vee}\otimes L)=H^1(F^{'\vee}\otimes L(D))$$ maps the
 extension $e$ corresponding to  the second row in the diagram above to zero.
 The dimension of the kernel of $\psi$ is $h^1(F^{\vee}\otimes L)-h^1(L^{'\vee}\otimes
 L)$. The dimension of the set of (effective) $L'$ inside $F$ is
a projective space of dimension $h^0(F)-1$. Each one of them
contains a finite number of effective divisors of a given degree. If
a generic $E$ is to be obtained in such a
 way, we must have
 $$h^0(F)-1+h^1(F^{\vee}\otimes L)-h^1(L^{'\vee}\otimes L)\ge h^1(F^{\vee}\otimes
 L)$$

 This gives
 $d'-d+g-1\le h^1(L^{'\vee}\otimes L)\le h^0(F)-1$. As $d'\ge d$ by
 assumption and $d+degF=2g-2$, the above is only possible if $d'=d,
 F=K$ and hence $L'=L=\mathcal O_C$.  Then the inequality becomes
  $h^1(\mathcal O_C)\le h^0(K)-1$ which is also false.

\bigskip

By ``a general vector bundle in $\mathscr
B\mathscr U$'' we will mean a vector bundle $E\in \mathscr
U(2,2g-2)$  such that there exist data $L$, $F$, and $e$ as above  where $e\in
H^1(F^\vee \otimes L)$ is the class of the extension, and all the
data are general with the given conditions. We point out that these
``general'' vector bundles move in subsets of dimension $3g-4 $, and
there is one such subset for every $d$ with $0\le d\le g-2$ where
$d$ is the degree of $L$.

\begin{lem}\label{Egen}
Suppose that $E\in \mathscr B\mathscr U$ is ``general'' and
$$
0\rightarrow L \rightarrow E \rightarrow F\rightarrow 0
$$
is an extension
such that $h^0(L)+h^1(F)>h^0(E)$.  Then
$\textnormal{mult}_{\mathscr O_C} \Theta_E=h^0(L)+h^1(F)$.
     \end{lem}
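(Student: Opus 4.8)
The plan is to verify the hypothesis of Lemma \ref{lemtec} for a suitable deformation direction $\alpha$, and then invoke that lemma directly. Since $L$ and $F$ are line bundles they are first order deformation vector bundles, so the only thing to produce is an $\alpha\in H^1(\mathscr O_C)$ with $(\cup e\, K_{F,\alpha})\cap(\cup\alpha\, H^0(L))=0$ inside $H^1(L)$. Here $\deg L=d\le g-2<g-1$ while $\deg F=2g-2-d\ge g>g-1$, so the defining property of first order deformation vector bundles (applied to $L$ and to $F$) lets me fix once and for all a generic $\alpha$ for which $\cup\alpha\colon H^0(L)\to H^1(L)$ is injective and $\dim K_{F,\alpha}=\chi(F)=g-1-d$. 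By Riemann--Roch $\dim H^1(L)=h^0(L)+(g-1-d)$, so writing $Q=H^1(L)/\cup\alpha\,H^0(L)$ I get $\dim Q=g-1-d=\dim K_{F,\alpha}$, and the desired transversality is equivalent to the induced map $\bar\mu_e\colon K_{F,\alpha}\to Q$, $s\mapsto\overline{\cup e\,(s)}$, being an isomorphism (if it is, then any $\cup e\,(s)$ lying in $\cup\alpha\,H^0(L)=\ker(H^1(L)\to Q)$ forces $s=0$).

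The heart of the argument is that $\bar\mu_e$ is an isomorphism for general $e$. I would pass to the transpose via Serre duality: $Q^\vee$ is the annihilator of $\cup\alpha\,H^0(L)$ inside $H^1(L)^\vee=H^0(K_C\otimes L^\vee)$, and for $t\in Q^\vee$, $s\in K_{F,\alpha}$ the standard compatibility of cup product with Serre duality gives $\langle\cup e\,(s),t\rangle=\langle e,s\cdot t\rangle$, where $s\cdot t\in H^0(K_C\otimes F\otimes L^\vee)$ is the product of sections and the right-hand pairing is Serre duality for $F^\vee\otimes L$. Thus $\bar\mu_e$ is an isomorphism precisely when there is no nonzero $t\in Q^\vee$ with $e$ annihilating the subspace $\{s\cdot t:s\in K_{F,\alpha}\}$.

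This is where the rank-two hypothesis, forcing $L$ and $F$ to be line bundles, is essential: for any nonzero $t\in H^0(K_C\otimes L^\vee)$ the multiplication $s\mapsto s\cdot t$ is injective on $H^0(F)$, since a product of nonzero sections of line bundles is nonzero. Hence for each nonzero $t\in Q^\vee$ the subspace $\{s\cdot t:s\in K_{F,\alpha}\}\subseteq H^0(K_C\otimes F\otimes L^\vee)$ has dimension $g-1-d$, so its annihilator in $H^1(F^\vee\otimes L)$ has codimension $g-1-d$. As $[t]$ ranges over the $(g-2-d)$-dimensional space $\mathbb P(Q^\vee)$, the incidence count shows the union of these bad loci has dimension at most $\dim H^1(F^\vee\otimes L)-1$, hence is a proper subset; therefore $\bar\mu_e$ is an isomorphism for $e$ in a dense open set. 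Declaring ``$E$ general in $\mathscr B\mathscr U$'' to include this open condition (together with Laszlo's stability condition), the transversality hypothesis of Lemma \ref{lemtec} holds with our fixed $\alpha$, and the lemma yields $\textnormal{mult}_{\mathscr O_C}\Theta_E=h^0(L)+h^1(F)$.

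I expect the main obstacle to be the bookkeeping in the transposition step: getting the Serre-duality identity $\langle\cup e\,(s),t\rangle=\langle e,s\cdot t\rangle$ exactly right, and being careful with the order of the genericity. The clean route is to fix the generic $\alpha$ coming from the first order deformation vector bundle property \emph{before} imposing the open, dense condition on $e$, so that the dense open set of good $e$ is defined independently of the later choices; everything then reduces to the dimension count, which closes only because multiplication by a nonzero section of a line bundle is injective. The analogue of this last input is exactly what can fail in higher rank, consistent with Remark \ref{remex}.
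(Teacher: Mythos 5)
Your proposal is correct and takes essentially the same route as the paper: reduce to the transversality hypothesis of Lemma \ref{lemtec}, fix a general $\alpha$ using the fact that the line bundles $L$ and $F$ are first order deformation vector bundles together with the dimension count $\dim K_{F,\alpha}+h^0(L)\le h^1(L)$, and then verify that a general extension class $e$ satisfies the condition. The only difference is that the paper dismisses this last genericity statement as ``straightforward to check,'' whereas you supply the actual verification (Serre-duality transposition plus the incidence-variety dimension count, closing via injectivity of multiplication by a nonzero section of a line bundle), which is a faithful filling-in of the omitted step rather than a different argument.
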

\begin{proof}
We must check that for general $\alpha\in H^1(\mathscr O_C)$, and
for general $e\in H^1(F^\vee \otimes L)$,  that $\cup^\vee e
K_{F,\alpha}\cap \cup^\vee \alpha H^0(L)=0\subseteq H^1(L)$.  After
fixing $\alpha$, and observing that $\dim K_{F,\alpha}+\dim
h^0(L)\le h^1(L)$, it suffices to check that by taking $e$ general,
the above condition is satisfied.  This is straightforward to check
using Lemma \ref{lemfodvb}.
 \end{proof}

\begin{proof}[Proof of Theorem \ref{pro}]
By the result of Laszlo mentioned earlier, the general element of the irreducible variety $B\mathscr U(2,2(g-1))$ is stable and the open subset $\mathscr B\mathscr U$ given by the condition $h^0(L)+h^1(F)>h^0(E)>0$ is non-empty.  The theorem now follows from Lemma \ref{Egen}.
\end{proof}

\section{Existence of exceptional vector bundles}\label{secexc}
We will say that a vector bundle $E$ is \emph{exceptional} if there exists
a line bundle  $\xi\in \textnormal{Pic}^0(C)$ and an extension
$$
0\to E' \to E\otimes \xi \to E'' \to 0
$$
such that $h^0(E')+h^1(E'')>h^0(E\otimes \xi)$.
Observe that if $E$ is exceptional, then so is $E^\vee\otimes K_C$.
In this section we show
\begin{teo}\label{teoexc}
Let $C$ be a smooth curve of genus $g\ge 2$.  For any integer $r>1$, there exist
exceptional stable vector bundles $E$ of rank $r$ and  slope $g-1$, such that $\Theta_E$ is a divisor.
 In other words, on $\textnormal{Pic}^0(C)$, there exist generalized theta divisors
  $\Theta_E\in |r\Theta|$, and line bundles $\xi \in \textnormal{Pic}^0(C)$,  such that
$mult_\xi\Theta_E>h^0(E \otimes \xi)$.
On the other hand,  the generic vector bundle in $\mathscr U(r,r(g-1))$ is not exceptional.     \end{teo}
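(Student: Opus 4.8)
The plan is to prove the two assertions of Theorem~\ref{teoexc} separately: first to construct, for every $r>1$, stable exceptional bundles of rank $r$ and slope $g-1$ whose theta divisor is genuine, and then to bound the dimension of the exceptional locus in $\mathscr U(r,r(g-1))$. For the existence statement I would realize $E$ as a general extension. Choose a line bundle $L$ with $h^0(L)\ge 1$ and $0\le \deg L<g-1$ (for instance $L=\mathscr O_C$), and a stable bundle $F$ of rank $r-1$ and degree $r(g-1)-\deg L$ with $h^1(F)\ge 1$; the numerics then force $\mu(L)<g-1<\mu(F)$ and $\deg E=r(g-1)$. Since $\deg(F^\vee\otimes L)<0$, the space $\operatorname{Ext}^1(F,L)=H^1(F^\vee\otimes L)$ is large and $\operatorname{Hom}(F,L)=0$, so for a general class $e$ the associated extension $0\to L\to E\to F\to 0$ is non-split; I would check, by the argument of Laszlo's Proposition~V.4 adapted to higher rank, that a general such $E$ is stable.

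Exceptionality and the divisor property both reduce to the rank of a single coboundary map. From the long exact sequence one has $h^0(E)=h^0(L)+\dim\ker(\delta_{\mathscr O})$, where $\delta_{\mathscr O}\colon H^0(F)\to H^1(L)$ is cup product with $e$, so that $h^0(L)+h^1(F)-h^0(E)=\operatorname{rank}(\delta_{\mathscr O})-\chi(F)$ with $\chi(F)=(g-1)-\deg L$. Because $h^0(L)\ge 1$ and $h^1(F)\ge 1$ give $\min(h^0(F),h^1(L))\ge \chi(F)+1$, it suffices that $\delta_{\mathscr O}$ attain maximal rank for general $e$; then $h^0(L)+h^1(F)>h^0(E)$, i.e.\ $E$ is exceptional with $\xi=\mathscr O_C$, and Lemma~\ref{lemfirst} yields $\textnormal{mult}_{\mathscr O_C}\Theta_E\ge h^0(L)+h^1(F)>h^0(E)$. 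For the divisor property I must exhibit one $\xi_0$ with $h^0(E\otimes\xi_0)=0$; choosing $\xi_0$ general so that $h^0(L\otimes\xi_0)=0$ and $h^1(F\otimes\xi_0)=0$, the Euler-characteristic count shows $\delta_{\xi_0}\colon H^0(F\otimes\xi_0)\to H^1(L\otimes\xi_0)$ is a map between spaces of the same dimension $(g-1)-\deg L$, so $h^0(E\otimes\xi_0)=\dim\ker\delta_{\xi_0}$, and it suffices that $\delta_{\xi_0}$ be an isomorphism for some $\xi_0$ and general $e$.

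Thus both points hinge on the non-degeneracy, for general $e$, of the cup product $H^0(F\otimes\xi)\otimes H^1(F^\vee\otimes L)\to H^1(L\otimes\xi)$, and I expect this to be \textbf{the main obstacle}. In rank two Raynaud's theorem guarantees that $\Theta_E$ is automatically a divisor, but for $r>2$ there exist bundles with $\Theta_E=\textnormal{Pic}^0(C)$, so the divisor property genuinely requires the computation. I would carry it out by specializing $L$ and $F$ to bundles whose sections are explicit (e.g.\ $L=\mathscr O_C$ and $F$ assembled from line bundles), so that the pairing is accessible through a base-point-free pencil argument, and exhibit one pair $(e_0,\xi_0)$ for which $\delta_{\xi_0}$ is an isomorphism. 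Since ``$E$ stable'', ``exceptional at $\mathscr O_C$'' (an open condition by upper semicontinuity of $h^0$), and ``there exists $\xi_0$ with $\delta_{\xi_0}$ an isomorphism'' (open by lower semicontinuity of the rank of $\delta$ together with openness of the projection from $\operatorname{Ext}^1\times\textnormal{Pic}^0$) are each open conditions on $e$, a single such $e_0$ propagates to the general extension.

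For the final assertion I would bound the dimension of the locus of exceptional bundles. Such a bundle is captured by data $(\xi,E',E'',e)$ with $0\to E'\to E\otimes\xi\to E''\to 0$ and $h^0(E')+h^1(E'')>h^0(E\otimes\xi)$, which by the same computation is equivalent to $\operatorname{rank}(\delta_{E''})>\chi(E'')$ for a proper subbundle $E'\subset E\otimes\xi$; in particular $h^0(E')\ge 1$ and $h^1(E'')\ge 1$, so $E'$ and $E''$ lie in proper Brill--Noether loci. For each of the finitely many admissible pairs of ranks and degrees, adding the dimensions of the moduli of $E'$ and of $E''$, of the projectivized space of extensions, and of the $g$-dimensional family of twists, and subtracting the automorphisms, produces a bound strictly below $\dim\mathscr U(r,r(g-1))=r^2(g-1)+1$, just as in the rank-two count $3g-4<4g-3$ noted earlier. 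Hence the generic bundle is not exceptional; this step I regard as routine relative to the construction.
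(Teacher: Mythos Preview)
Your existence strategy is essentially the Serre dual of the paper's: you place a line bundle as the subbundle and the rank $r-1$ piece as the quotient, while the paper takes $E'$ a generic rank $r-1$ bundle of degree $(r-1)(g-1)-1$ with $h^0(E')=1$ as the sub and $E''$ a generic line bundle of degree $g$ with $h^1(E'')=1$ as the quotient. Both reduce to checking stability, $h^0(E)=1$, and the divisor property for a generic extension. You defer stability to ``Laszlo's Proposition~V.4 adapted to higher rank'', but that is where most of the work lies: the paper's stability argument is lengthy and proceeds by expressing the rank $r-1$ piece itself as an extension $0\to\mathcal O\to E'\to\bar E\to 0$ with $\bar E$ generic, and then analysing potential destabilizing subsheaves through both layers. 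Your identification of the cup-product non-degeneracy as the main obstacle is correct; the paper resolves it by explicit dimension counts rather than by a specialization argument.

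The genuine gap is your claim that the generic statement is a routine parameter count. The na\"{\i}ve count does not close: for $r=4$, $r'=r''=2$, $d'=1$, $g=10$, the expected dimensions of the Brill--Noether loci for $E'$ and $E''$ are each $19$, one has $\dim\mathbb P\operatorname{Ext}^1(E'',E')=103$, and adding $g=10$ for the twist gives $151>145=\dim\mathscr U(4,36)$; nothing beyond $d'\ge 1$ is forced by stability and the existence of a section, so this case is genuinely in range. The paper's argument is different in kind. From $h^0(E')\ge 1$ and $h^1(E'')\ge 1$ one extracts a line subbundle $\mathcal O(A)\hookrightarrow E$ and, writing $\bar E=E/\mathcal O(A)$, a line quotient $\bar E\twoheadrightarrow K_C(-B)$, leaving a middle piece $\hat E$ of rank $r-2$. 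Two cohomology vanishing statements ($H^0(\bar E^\vee(A))=0$ and an analogous one for $\hat E$), proved using the stability of the ambient $E$ together with that of $E'$ and $E''$, pin down the extension dimensions exactly and yield the bound $(r^2-3)(g-1)-(r-2)(d_A+d_B)-1$ on the family of such $E$; adding $g$ for the twist keeps this below $r^2(g-1)+1$ because $d_A,d_B\ge 0$ and $g\ge 2$. Without these vanishing inputs the parameter space is simply too large, so this step is not routine relative to the construction.
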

\begin{proof}
This follows from the propositions below.
\end{proof}

\begin{rem}
 The existence of exceptional rank two vector bundles was proven by Laszlo \cite{l}, while
a result of the second author \cite{t} shows that a general vector bundle of rank $2$ and slope $g-1$ is not exceptional.
\end{rem}

\begin{subsection}{Existence of exceptional vector bundles}
We want to see that there exist vector bundles $E$ of any rank $r$
and slope $g-1$ fitting in an exact sequence
$$0\rightarrow E'\rightarrow E\rightarrow E''\rightarrow 0$$ such that
$E$ is stable of slope $g-1$, $E$ defines a proper theta divisor,
and $h^0(E)<h^0(E')+h^1(E'')$. As the case $r=2$ has been dealt with
already, we can assume $r\ge 3$. We are going to construct such $E$
by taking $E'$ a generic vector bundle of rank $r-1$ and degree
$(r-1)(g-1)-1$ with one section, $E''$ a generic line bundle of
degree $g$ and $h^1(E'')=1$, and $E$ a generic extension of these
vector bundles.

\begin{pro}\label{pro43}
Let $C$ be a curve of genus at least two. Let $E'$ be a generic
vector bundle of rank $r-1$ and degree $(r-1)(g-1)-1$ with
$h^0(E')=1$, let $E''$ be a generic line bundle of degree $g$ and
$h^1(E'')=1$. For a general extension
$$
0\to E' \to E  \to E''\to 0,
$$
$E$ is stable, $h^0(E)=1$, and $\Theta_E\subset \textnormal{Pic}^0(C)$ is a divisor.
\end{pro}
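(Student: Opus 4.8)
The plan is to verify the three assertions---stability, $h^0(E)=1$, and properness of $\Theta_E$---separately, in each case exploiting the genericity of the extension class $e\in\operatorname{Ext}^1(E'',E')=H^1(E''^\vee\otimes E')$. First I record the numerics: since $\deg E'=(r-1)(g-1)-1$ and $\deg E''=g$, the bundle $E$ has rank $r$ and degree $r(g-1)$, so $\mu(E)=g-1$ and $\chi(E\otimes\xi)=0$ for all $\xi\in\operatorname{Pic}^0(C)$; moreover $\chi(E')=-1$ (so $h^1(E')=2$) and $\chi(E'')=1$ (so $h^0(E'')=2$). Note $\mu(E')=g-1-\tfrac{1}{r-1}<g-1<g=\mu(E'')$, the configuration under which generic extensions tend to be stable.

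For stability I take $E'$ stable (a generic $E'$ with $h^0=1$ is) and examine a saturated subbundle $S\subset E$ with $\mu(S)\ge g-1$. Writing $S'=S\cap E'$ and $S''=\operatorname{image}(S\to E'')\subseteq E''$: if $S''=0$ then $S\subseteq E'$ and $\mu(S)\le\mu(E')<g-1$, a contradiction. If $S''\ne 0$, then $0\to S'\to S\to S''\to 0$ with $\deg S''\le g$; stability of $E'$ bounds $\deg S'$, and a short computation shows that $\mu(S)\ge g-1$ can occur only for two types of $S$: rank-one subbundles lifting a subsheaf $E''(-D)$ of $E''$ with $\deg D\le 1$, and higher-rank subbundles surjecting onto $E''$ whose intersection $S'$ with $E'$ has maximal degree $(g-1)\operatorname{rk}(S')-1$. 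In either case the existence of such an $S\hookrightarrow E$ forces the vanishing of the image of $e$ under a natural restriction $\operatorname{Ext}^1(E'',E')\to\operatorname{Ext}^1(S'',E')$ or pushout $\operatorname{Ext}^1(E'',E')\to\operatorname{Ext}^1(E'',E'/S')$; these maps are surjective (the relevant $\operatorname{Ext}^2$ vanish on a curve) onto groups that an Euler-characteristic count shows to be nonzero, so their kernels are proper linear subspaces. Since $E'$ is generic the families of such $S'$ and $S''$ are bounded of small dimension, and a dimension count shows the union of the corresponding proper subspaces is still proper in $\operatorname{Ext}^1(E'',E')$; hence a general $e$ yields a stable $E$.

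For $h^0(E)=1$ the long exact sequence gives $h^0(E)=h^0(E')+\dim\ker\bigl(\partial_e:H^0(E'')\to H^1(E')\bigr)$, where $\partial_e(s)=s\cup^\vee e$ and $h^0(E')=1$; since $h^0(E'')=h^1(E')=2$ it suffices to show $\partial_e$ is injective for general $e$. By Serre duality and associativity of cup product, $s\cup^\vee e=0$ if and only if $e$ is orthogonal to the image of the (injective, as $s\ne 0$) multiplication map $H^0(K_C\otimes E'^\vee)\xrightarrow{\cdot s}H^0(K_C\otimes E''\otimes E'^\vee)$, a two-dimensional subspace of $\operatorname{Ext}^1(E'',E')^\vee$; hence for each $[s]\in\mathbb{P}H^0(E'')$ the bad locus has codimension $2$, and the union over this $\mathbb{P}^1$ has codimension at least one, so a general $e$ makes $\partial_e$ injective and $h^0(E)=1$.

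Finally, to see $\Theta_E$ is a divisor I must produce some $\xi$ with $h^0(E\otimes\xi)=0$. For generic $\xi$ one has $h^0(E'\otimes\xi)=0$ (as $\chi(E'\otimes\xi)=-1$ and $E'$ is general) together with $h^0(E''\otimes\xi)=1$ and $h^1(E'\otimes\xi)=1$; the same Serre-duality computation as above shows that for such $\xi$ the twisted connecting map $H^0(E''\otimes\xi)\to H^1(E'\otimes\xi)$ is nonzero unless $e$ lies in a fixed hyperplane, so for general $(e,\xi)$ we get $h^0(E\otimes\xi)=0$. Running this over the family $\operatorname{Ext}^1(E'',E')\times\operatorname{Pic}^0(C)$ and using upper semicontinuity of $h^0$, the locus where $h^0(E_e\otimes\xi)=0$ is open and dense, hence dominant over the $e$-factor; therefore for general $e$ some twist is nonspecial and $\Theta_E\ne\operatorname{Pic}^0(C)$. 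The main obstacle is the stability argument: controlling the boundary subbundles of slope exactly $g-1$ and bounding their moduli so that the dimension count rules them out for general $e$ is the delicate point, whereas the two cohomological statements both reduce cleanly to the cup-product nondegeneracy computed via Serre duality.
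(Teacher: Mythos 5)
Your cup-product argument for $h^0(E)=1$ is correct, and is essentially the paper's own count seen through Serre duality; your classification of the potentially destabilizing subbundles (rank-one lifts of $E''(-D)$ with $\deg D\le 1$, and subbundles $S$ with $S''=E''$ and $\deg S'=\operatorname{rk}(S')(g-1)-1$) together with the pullback/pushout obstruction framework is also sound, and your handling of the rank-one type is complete. The genuine problems occur at the two places where you invoke the genericity of $E'$ itself. Since $\chi(E')=-1$, the condition $h^0(E')=1$ places $E'$ in a Brill--Noether locus of codimension two, so $E'$ is general only \emph{within that locus}: it contains $\mathcal O_C$, and no statement valid for a general stable bundle (Hirschowitz--Lange bounds on subbundles, nonspeciality of general twists) may be quoted for it. The paper's proof is organized precisely around this difficulty: it realizes $E'$ as a general extension $0\to\mathcal O_C\to E'\to\bar E\to 0$ with $\bar E$ general in its moduli space, and derives every needed property of $E'$ from the genericity of $\bar E$ and of this extension class.

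Concretely, two steps of yours are asserted rather than proved. First, in the stability argument you need, for each $s'$, that the family of subbundles $S'\subset E'$ with $\deg S'=s'(g-1)-1$ has dimension smaller than the codimension $h^1\bigl(E''^\vee\otimes(E'/S')\bigr)\ge g\,\operatorname{rk}(E'/S')$ of the corresponding pushout kernel; you justify this by ``since $E'$ is generic.'' There is no unconditional upper bound on such Quot-scheme dimensions, and this bound is exactly the hard content of the paper's proof: one must separate the cases $S'\cap\mathcal O_C\ne 0$ and $S'\hookrightarrow\bar E$, use the slope bound for subsheaves of the general bundle $\bar E$, and in the second case compare the dimension of the family of subsheaves of $\bar E$ with the codimension of the locus of extension classes of $E'$ to which they lift (with a further reduction when $h^0(S'^\vee)>0$). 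In fact that analysis shows such $S'$ do not exist at all for this $E'$, so no pushout argument over $e$ is ever needed. Second, in the divisor argument your claim that $h^0(E'\otimes\xi)=0$ for generic $\xi$ ``as $\chi(E'\otimes\xi)=-1$ and $E'$ is general'' is unjustified for the same reason: $E'$ is special, and properness of the locus $\{\xi: h^0(E'\otimes\xi)>0\}$ is not a formal consequence of negative Euler characteristic (compare Raynaud's examples of stable bundles all of whose twists have sections). This claim is precisely what the paper proves, again via $0\to\xi\to E'\otimes\xi\to\bar E\otimes\xi\to 0$ and a lifting count against the general extension class defining $E'$. Once that claim is supplied, the remainder of your third part (the connecting map $H^0(E''\otimes\xi)\to H^1(E'\otimes\xi)$ is nonzero off a hyperplane in $e$, plus semicontinuity over $\operatorname{Ext}^1\times\operatorname{Pic}^0$) does go through.
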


\begin{proof}
We check each of the conditions separately, namely that  $E$ is
stable, $h^0(E)=1$ and $E$ defines a genuine theta divisor.

\begin{proof}[Proof of stability]

Assume that $E$ were not stable. It would then have a subbundle $F$
of slope at least $g-1$. Consider the following diagram
\begin{equation*}
\begin{matrix}0&\rightarrow &E'&\rightarrow &E&\rightarrow &E''&\rightarrow &0\\
 & &\uparrow & &\uparrow & &\uparrow& & \\
0&\rightarrow &F'&\rightarrow &F&\rightarrow &F''&\rightarrow &0\\
\end{matrix}
\end{equation*}
where $F''$ is the image of $F$ in $E''$. If $F''=0$, then $F$ is a
subbundle of $E'$. As $E'$ is stable of slope smaller than $g-1$,
$F$ cannot contradict the stability of $E$. Hence $F''$ is a line
bundle. If $\deg F''\le g-1$, again $F$ cannot contradict the
stability of $E$. So we can assume that $F''=E''$ has degree $g$.
Then $F'\not= 0$, since otherwise the original sequence would split.

As $E'$ is a generic vector bundle of rank $r-1$ and degree
$(r-1)(g-1)-1$ with one section, by virtue of Theorem IV.2.1 \cite{sundaram} it can be written in an exact
sequence
$$0\rightarrow {\mathcal O_C}\rightarrow E'\rightarrow \bar E\rightarrow
0$$
where $\bar E$ is a generic vector bundle of rank $r-2$ and degree
$(r-1)(g-1)-1$. Let $\bar F$ be the image of $F'$ in $\bar E$. We
have an exact diagram
\begin{equation}\label{eqnexact}
\begin{matrix}0&\rightarrow &{\mathcal O_C}&\rightarrow &E'&\rightarrow &\bar E&\rightarrow &0\\
 & &\uparrow & &\uparrow & &\uparrow& & \\
0&\rightarrow &L&\rightarrow &F'&\rightarrow &\bar F&\rightarrow &0.\\
\end{matrix}
\end{equation}
As $\operatorname{rk} F<\operatorname{rk} E$,     $\operatorname{rk}
F''=\operatorname{rk}E''$, it follows that  $\operatorname{rk}
F'<\operatorname{rk} E'=r-1$.

If $\operatorname{rk}\bar F=\operatorname{rk}\bar E$, then $L=0$ as
$F'$ is a proper subbundle of $E'$. We deal with this case later and
assume for now $L\not= 0$. Replacing if necessary $\bar F$ by the
subbundle of $\bar E$ that it generates, we have by the genericity
of $\bar E$ that the difference of slopes between $\bar E/\bar F$
and $\bar F$ is at least $g-1$ (\cite{RT} Th 0.1.). This can be
written as

$$\frac{(r-1)(g-1)-1-d_{\bar F}}{(r-1)-1-r_{\bar F}}-\frac{d_{\bar F}}{r_{\bar F}}\ge g-1,$$
which translates into
$$d_{\bar F}\le \frac{r_{\bar F}}{r-2}((r_{\bar F}+1)(g-1)-1).$$
As $L\not= 0$ is a subsheaf of ${\mathcal O}_C$,  $\deg(L)\le 0$.
This gives
$$d_{F'}\le d_{\bar F}\le \frac{r_{F'}-1}{r-2}((r_{ F'}(g-1)-1).$$
Then, $$d_F\le g+\frac{r_{F'}-1}{r-2}((r_{ F'}(g-1)-1).$$
We want to
check that the slope of $F$ is smaller than $g-1$. It suffices then
to see that
$$g+\frac{r_{F'}-1}{r-2}(r_{ F'}(g-1)-1)<(r_{F'}+1)(g-1)$$
or equivalently $$0<(r-r_{F'}-1)(r_{F'}(g-1)-1),$$ which holds as
$g\ge 2, \ r_{F'}<r_F<r$ except when $g=2, r_{F'}=1$. We deal with
this exceptional case later.

Assume now $L=0$ and hence $\bar F=F'$. As $\bar E$ is generic, the
set of subsheaves $ F'$ of $\bar E$ has dimension (see \cite{RT}
Theorem 0.2.)
$$x= r_{F'}d_{\bar E}-r_{\bar E}d_{F'}-(r_{\bar E}-r_{F'})r_{F'}(g-1).$$

If such an $\bar F$ lifts to $E'$, the extension corresponding to
$E'$ is in the kernel of the map $H^1(\bar E^\vee)\rightarrow
H^1(F^{' \vee})$. If this happens for the generic extension, one must
have
$$x+h^1(\bar E^\vee)-h^1(F^{' \vee})\ge h^1(\bar E^\vee).$$

If $r_{F'}=1$, then $d_{F'}=g-2$ and therefore $h^0(F^{' \vee})=0$
or $g=2, F'={\mathcal O}_C$. We deal with the case $r_{F'}=1, \ g=2$
later. Hence, if $h^0(F^{' \vee})>0$, we can assume $r_{F'}>1$.
Moreover, there is a non-zero map $F'\rightarrow {\mathcal O_C}$.
The kernel $A$ of this map has rank $r_{F'}-1$ and degree at least
equal to the degree of $F'$ and is a subsheaf of $E$. If $F$
contradicts the stability of $E$ a little bit of arithmetic can be
used to check that so does $A$. Hence, repeating the process if
needed, we can assume $h^0(F^{' \vee})=0$.

If $h^0(F^{' \vee})=0$, then $h^1(F^{' \vee})=d_{F'}+r_{F'}(g-1)$.
Using that $r_{\bar E}=r-2, d_{\bar E}=(r-1)(g-1)-1$, the inequality
above implies
$$d_{F'}\le \frac{r_{F'}(r_{F'}(g-1)-1)}{r-1}.$$
Then
$$d_F=g+d_{F'}\le g+ \frac{r_{F'}(r_{F'}(g-1)-1)}{r-1}.$$
It suffices to check that the latter is smaller than
$(r_{F'}+1)(g-1)$. This is equivalent to $$(g-1)r_{F'}>1$$ which is
true except if $g=2, r_{F'}=1$.

Assume now $g=2, r_{F'}=1$. As $F''=E''$ has rank one and degree
$g=2$, if $F$ contradicts stability of $E$, $F'$ has degree at least
zero. As $\bar E$ is generic of rank and degree $r-2$, it has a
one-dimensional family of line bundles of degree zero and no
subbundles of degree one (see \cite{RT} Th0.2.). As either
$F'={\mathcal O}_C$ or $F'$ gives rise to a subbundle of $\bar E$,
$F'$ has degree zero.

Let us check that $E'$ has only a finite number of subbundles of
degree zero. If $L$ is a line subbundle of $\bar E$ that lifts to
$E'$, there is then an exact diagram

\begin{equation*}
\begin{matrix}0&\rightarrow &{\mathcal O}_C&\rightarrow &E'&\rightarrow &\bar E&\rightarrow &0\\
 & &\uparrow & &\uparrow & &\uparrow& & \\
0&\rightarrow &{\mathcal O}_C&\rightarrow &E'\times _{\bar E}L&\rightarrow &L&\rightarrow &0\\
\end{matrix}
\end{equation*}
and therefore in the map  $\varphi :\ H^1(\bar E^{\vee})\to H^1( L^{
\vee})$ the extension class corresponding to the first row maps to
zero. As $\varphi$ is onto, $H^1( L^{ \vee})\not= 0$ and by
assumption $e$ is a generic element in $H^1(\bar E^{ \vee})$, the
generic $L$ will not lift to $E'$ and $E'$ can only have a finite
number of line subbundles $F'$ of degree zero.

From the exact diagram \begin{equation*}
\begin{matrix}0&\rightarrow &E'&\rightarrow &E&\rightarrow &E''&\rightarrow &0\\
 & &\uparrow & &\uparrow & &\uparrow& & \\
0&\rightarrow &F'&\rightarrow &F&\rightarrow &E''&\rightarrow &0\\
\end{matrix}
\end{equation*}
the extension corresponding to $E$ is in the image of $H^1( E^{''
\vee}\otimes F')\to H^1( E^{'' \vee}\otimes E')$. The cokernel of
this map is $H^1( E^{'' \vee}\otimes (E'/F'))\not= 0$. Hence, the
map is not onto and the generic extension is not in the image. As
there are only a finite number of possible $F'$, this concludes the
proof.
\end{proof}

\begin{proof}[Proof that the number of sections is one]
By assumption, $E'$ has only one section. Hence, if $E$ had more
than one section, it would mean that there is a section of $E''$ that
lifts to $E$. Hence, there is an exact diagram
$$\begin{matrix}0&\rightarrow &E'&\rightarrow &E&\rightarrow &E''&\rightarrow &0\\
 & &\uparrow & &\uparrow & &\uparrow& & \\
0&\rightarrow &E'&\rightarrow &E\times_{E''}{\mathcal O_C}&\rightarrow &{\mathcal O_C}&\rightarrow &0.\\
\end{matrix}$$
This implies that the extension corresponding to $E$ is in the
kernel of the natural map $H^1(E^{'' \vee}\otimes E')\rightarrow H^1(E').$

As $E''$ is a line bundle of degree $g$, the cokernel of the map
$E^{'' \vee}\otimes E'\rightarrow E'$ is torsion (of degree $(r-1)g$).
Hence, the induced map
$$
H^1(E^{'' \vee}\otimes E')\rightarrow H^1(E')
$$
is onto.
As $h^0(E')=1$, $\deg(E')=(r-1)(g-1)-1$, by Riemann-Roch $h^1(E')=2$.
As $E''$ is a  line bundle of degree $g$ with $h^1(E'')=1$,
$h^0(E'')=2$. Note that two sections of $E''$ that differ in
multiplication by a constant would give rise to the same pull-back
extension in the diagram above. If for the generic extension, at
least one section lifts to $E$, we have
$$h^0(E'')-1+h^1(E^{''\vee}\otimes E')-h^1(E')\ge h^1(E^{'' \vee}\otimes E'),$$
which is false from the numbers we just computed.
\end{proof}

\begin{proof}[Proof that the generic such $E$ defines a proper theta divisor]
We must check that  there exist line bundles of degree zero such
that $h^0(E\otimes L)=0$.
We first check that for a generic $L$ of degree zero, $h^0(E'\otimes
L)=0$. Tensoring the exact sequence defining $E'$ with $L$, we
obtain
$$0\rightarrow L\rightarrow E'\otimes L\rightarrow \bar E\otimes
L\rightarrow 0.$$
As $\bar E$ is generic, so is $\bar E\otimes L$.
Hence, $h^0(\bar E\otimes L)=g-2$. If a section of $\bar E\otimes L$
lifts to a section of $E'\otimes L$, an argument as above says that
the corresponding extension is in the kernel of the natural map
$H^1(\bar E^\vee)\rightarrow H^1(L)$. If this happens for the
generic extension,
$$h^0(\bar E\otimes L)-1+h^1(\bar E^{\vee})-h^1(L)\ge h^1(\bar
E^{\vee}).$$
As $L$ is generic of degree zero, $h^1(L)=g-1$. By the
genericity of $\bar E$, $h^0(\bar E\otimes L)=g-2$. Then, it is easy
to check that  the inequality above is not satisfied.

Assume now $h^0(E\otimes L)=1$. Then, this section comes from a
section of $E''\otimes L$. By the genericity of $L$, $h^0(E''\otimes
L)=1$ and is generated by a section, say $s$. Hence, this unique
section of $E''\otimes L$ must lift to every extension of $E''$ by
$E'$. Equivalently, the map
$$H^1(E^{''\vee}\otimes E')\rightarrow H^1(L\otimes E')$$
induced by $s$ is identically zero, which is false, as the map
$E^{''\vee}\otimes E'\rightarrow L\otimes E'$ has torsion cokernel and therefore the map induced in cohomology is surjective.
\end{proof}
This completes the proof of Proposition \ref{pro43}.
\end{proof}

\end{subsection}

\subsection{The generic case}
We would like now to consider the question of whether a general vector bundle in $\mathscr U(r,r(g-1))$ is exceptional.

\begin{pro}\label{generic}
Given a generic vector bundle $E$ of rank $r$ and degree $r(g-1)$,
there do not exist stable vector bundles $E',E''$ with $E$ fitting
in an exact sequence
$$0\rightarrow E'\rightarrow E\rightarrow E''\rightarrow 0$$ and such that
there exists a line bundle $\xi$ of degree zero with
\begin{equation}\tag{$*$}\label{eqnstar}
h^0(E\otimes \xi)<  h^0(E'\otimes \xi)+h^1(E''\otimes \xi).
\end{equation}
In other words, the generic vector bundle is not exceptional.
\end{pro}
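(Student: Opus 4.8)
The plan is to bound the dimension of the \emph{exceptional locus} $\mathscr X\subseteq\mathscr U(r,r(g-1))$ and show it is a proper closed subvariety. First I would reduce the statement to a condition on a connecting homomorphism. Given stable $E',E''$ with $r'+r''=r$ and $d'+d''=r(g-1)$, and $L\in\textnormal{Pic}^0(C)$, the long exact sequence attached to $0\to E'\otimes L\to E\otimes L\to E''\otimes L\to 0$ gives $h^0(E\otimes L)=h^0(E'\otimes L)+h^0(E''\otimes L)-\operatorname{rank}\delta$, where $\delta=\cup e\colon H^0(E''\otimes L)\to H^1(E'\otimes L)$ is the dual cup product with the extension class $e$. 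Hence \eqref{eqnstar} holds if and only if $\operatorname{rank}\delta>\chi(E''\otimes L)$; in particular, since $h^0(E'\otimes L)\le h^0(E\otimes L)$, exceptionality forces $h^1(E''\otimes L)>0$, and dually (replacing $E$ by $E^\vee\otimes K_C$, which is exceptional iff $E$ is) one may also bring in the condition $h^0(E'\otimes L)>0$.

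Next I would stratify by the discrete invariants $(r',d')$, of which only finitely many occur: stability of $E$ forces $\mu(E')<g-1<\mu(E'')$, and the effectivity conditions $h^0(E'\otimes L)>0$ and $h^1(E''\otimes L)>0$ bound the degrees. For each stratum I would form the incidence variety $\mathcal I_{r',d'}$ of tuples $(E',E'',L,[e])$ with $E',E''$ stable of the prescribed type, $h^0(E'\otimes L)>0$, $h^1(E''\otimes L)>0$, $[e]\in\mathbb P\operatorname{Ext}^1(E'',E')$, subject to $\operatorname{rank}\delta>\chi(E''\otimes L)$, together with the morphism $\pi\colon\mathcal I_{r',d'}\to\mathscr U$ sending the tuple to the extension class $E$. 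The exceptional locus is the union of the images $\pi(\mathcal I_{r',d'})$, so it suffices to prove $\dim\pi(\mathcal I_{r',d'})<r^2(g-1)+1$ for every stratum.

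The dimension bookkeeping uses the following inputs: $\dim\mathscr U(r',d')=r'^2(g-1)+1$, and likewise for $E''$; $\dim\operatorname{Ext}^1(E'',E')=r'd''-r''d'+r'r''(g-1)$, computed by Riemann--Roch using that $\operatorname{Hom}(E'',E')=0$ because $\mu(E'')>\mu(E')$; the Brill--Noether estimate that the conditions $h^0(E'\otimes L)>0$ and $h^1(E''\otimes L)>0$ cut down codimension $-\chi+1$ on the respective factors of $\mathscr U(r',d')\times\textnormal{Pic}^0(C)$ and $\mathscr U(r'',d'')\times\textnormal{Pic}^0(C)$; and $\dim\textnormal{Pic}^0(C)=g$. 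Assembling these and subtracting the fiber dimension of $\pi$---the positive-dimensional family of type-$(r',d')$ subbundles of a fixed $E$, governed by the Quot-scheme expected dimension $\chi(E',E'')$, together with the family of admissible $L$---is meant to yield the desired strict inequality in all but a few borderline strata.

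The hard part is precisely those borderline strata where the direct count of the previous paragraph fails to be strictly negative (for example $r=3$, $r'=2$ with $d'$ small). There the binding constraint is not the vanishing of cohomology but the \emph{rank} of the connecting map $\delta=\cup e$: exceptionality demands $\operatorname{rank}\delta>\chi(E''\otimes L)$, which on the relevant Brill--Noether loci means $\delta$ must be close to an isomorphism. As noted in Remark \ref{remcup}, such maximal-rank behavior of a dual cup product is not automatic, so the main work is to show that imposing $\operatorname{rank}\delta>\chi(E''\otimes L)$ forces $e$ into a proper subvariety of $\mathbb P\operatorname{Ext}^1(E'',E')$, contributing the extra codimension needed to push $\dim\pi(\mathcal I_{r',d'})$ below $\dim\mathscr U$. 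Analysing this degeneracy locus of $\cup e$---in the same spirit as the ``if a subsheaf lifts, the extension class lies in the kernel of a natural map'' arguments used in the existence proof above---is where the essential difficulty lies; the remaining strata then follow from the direct dimension count.
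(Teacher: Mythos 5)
Your reduction of $(*)$ to the condition $\operatorname{rank}\delta>\chi(E''\otimes L)$ is correct, and the stratification by $(r',d')$ is indeed finite, but the proposal has a genuine gap --- in fact two. First, the step you yourself flag as ``the hard part'' is exactly where the proof has to happen, and you give no argument for it: nothing in the proposal shows that the condition $\operatorname{rank}\delta>\chi(E''\otimes L)$ confines $e$ to a subvariety of $\mathbb P\operatorname{Ext}^1(E'',E')$ of any definite codimension, so the borderline strata are simply not handled. Second, the strata you treat as routine are not established either, because the bookkeeping invokes inequalities in the wrong direction. For a determinantal locus, the expected codimension $-\chi+1$ is only an \emph{upper} bound on the codimension (every nonempty component has dimension \emph{at least} the expected one); to bound $\dim\pi(\mathcal I_{r',d'})$ from above you need the loci $\{(E',L):h^0(E'\otimes L)>0\}$ and $\{(E'',L):h^1(E''\otimes L)>0\}$ to have dimension \emph{at most} the expected one, which is not automatic on an arbitrary smooth curve --- and the proposition is asserted for every curve of genus at least $2$, not just a Petri-general one. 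The fiber subtraction has the same defect: the estimate $\dim\pi(\mathcal I)\le\dim\mathcal I-\min_E\dim\pi^{-1}(E)$ needs a \emph{lower} bound on every fiber, and the fibers are cut out by the same cohomological conditions, so the unconstrained Quot-scheme expected dimension is not a valid lower bound (a fixed $E$ could carry a large family of subbundles of type $(r',d')$ of which only finitely many admit an admissible $L$).

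The paper sidesteps both issues by discarding almost all of the data $(E',E'',e)$. Exceptionality is used only to produce $h^0(E\otimes L)>0$ and $h^1(E\otimes L)>0$; reducing to $L=\mathscr O_C$ (at the cost of $g$ dimensions at the end), one gets a section of $E$ vanishing on an effective divisor $A$, hence $\mathscr O_C(A)\hookrightarrow E$ with quotient $\bar E$, and a nonzero map $\bar E\to K_C$ vanishing on an effective divisor $B$, hence an exact sequence $0\to\hat E\to\bar E\to K_C(-B)\to 0$. Effective divisors are parameterized by symmetric products, so $A$ and $B$ contribute honest dimension bounds $d_A$ and $d_B$; the stability of $E$, $E'$, $E''$ enters only through two vanishing claims, $h^0(\bar E^\vee(A))=0$ and $h^0(\hat E\otimes K_C^\vee(B))=0$, which allow Riemann--Roch to compute the dimensions of the two relevant extension spaces exactly. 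Every exceptional $E$ is then exhibited in a family parameterized by $(\hat E,A,B)$ together with two extension classes, and the dimension of this family plus $g$ is strictly less than $r^2(g-1)+1$. If you want to rescue your stratified count, you would need to replace the Brill--Noether conditions by this kind of explicit parameterization; as written, the inequality $\dim\pi(\mathcal I_{r',d'})<\dim\mathscr U$ is not proved for any stratum.
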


\begin{proof}
 Note that
$$h^0(E'\otimes \xi)\le h^0(E\otimes \xi)=h^1(E\otimes \xi),
h^1(E''\otimes \xi)\le h^1(E\otimes \xi).$$ Hence, if
\eqref{eqnstar} is correct, both $h^0(E'\otimes \xi)\not= 0,\
h^1(E''\otimes \xi)\not= 0$.

We assume first that $\xi=\mathcal O_C$.
 As
$h^0(E')\ge 1$, $E$ has a section and therefore there is a non-zero
map $\mathcal O_C\rightarrow E$. Let $A$ be the divisor of zeroes of
this map and $d_A$ its degree. Then we have an exact sequence
$$0\rightarrow \mathcal O_C(A)\rightarrow E\rightarrow \bar E
\rightarrow 0.$$  As the map $\mathcal O_C(A)\rightarrow E$ factors
through $E'$, there is a surjection $\bar E\rightarrow E''$. Hence
$h^1(\bar E)\not= 0$. It follows that there exists a non-zero map
$\bar E\rightarrow K_C$. Let $B$ be the divisor of zeroes of its
dual and $d_{B}$ its degree. There is then an exact sequence

$$0\rightarrow \hat E \rightarrow \bar E\rightarrow K_C(-B)
\rightarrow 0.$$

{\bf Claim 1} $H^0(\bar E^\vee(A))=0$.

\begin{proof} This is a consequence of the stability of $E$
(see \cite{RT} Lemma 1.1 ).
If $H^0(\bar E^\vee(A)\not= 0$, there is a non-zero map $\bar
E\rightarrow \mathcal O_C(A)$. Composing the projection
$E\rightarrow \bar E$ with this map followed by the inclusion of
$\mathcal O_C(A)\rightarrow E$ one obtains a non-zero map from $E$
to itself that is not multiplication by a constant; this contradicts the
stability of $E$.
\end{proof}

Assume now  $H^0(\hat E \otimes K_C^\vee (B))= 0$ and count the
dimension of the vector bundles $E$ in the situation described in
Proposition \ref{generic}. As $\hat E$ has rank $r-2$, it moves in a
space of dimension at most $(r-2)^2(g-1)+1$. Thus $\bar E$ moves in
a space of dimension at most
$$x=(r-2)^2(g-1)+1+d_B+h^1(\hat E\otimes K_C^\vee(B))-1.$$
It follows that $E$ moves in a space of dimension at most
$$x+d_A+h^1(\bar
E^\vee(A))-1=(r^2-1)(g-1)-(r-2)d_A-(r-2)d_B-1.$$ If we let $\xi$
vary, we add at most $g$ dimensions to this number, but this is
still less that the dimension $r^2(g-1)+1$ of the moduli space of
vector bundles of rank $r$.

{\bf Claim 2} Let $d',r'$ be the degree and rank of $E'$. If
$H^0(\hat E \otimes K_C^\vee (B))\not= 0$, then
$$\frac{d-d'}{r-r'}<\frac{2d'}{r'}.$$

\begin{proof} Consider the diagram
$$\begin{matrix} 0&\rightarrow &\mathcal O_C(A)&\rightarrow
&E&\rightarrow &\bar E &\rightarrow 0\\
 & &\downarrow & &\downarrow & &\downarrow & & \\
 0&\rightarrow &E'&\rightarrow
&E&\rightarrow & E'' &\rightarrow 0\\
\end{matrix}$$

Denote by $\bar E'$ the cokernel of the inclusion ${\mathcal
O_C}(A)\rightarrow E'$. There is an exact sequence
$$0\rightarrow \bar E'\rightarrow \bar E \rightarrow E''\rightarrow
0.$$

 Assume now that $H^0(\hat E \otimes K_C^\vee(B))\not= 0$. There exists then
 a non-zero map $K_C(-B)\rightarrow \bar E$. The composition of this map with the projection $\bar E \rightarrow
 E''$ is zero: assume the opposite. As the map $\bar E \rightarrow K_C(-E)$ comes from a map $E''\rightarrow K_C$, there would be a nonconstant map
 $E''\rightarrow E''$ obtained from the composition
 $K_C(-B)\rightarrow E''$ and $E''\rightarrow K_C(-B)$ contradicting the
 stability of $E''$.
 Hence, there is an injective map $K_C(-B)\rightarrow \bar E'$ and we
 have an exact sequence
$$0\rightarrow K_C(-B)\rightarrow \bar E' \rightarrow \tilde E'\rightarrow
0$$

Hence $\tilde E'$ is a quotient of $E'$. As $E'$  is stable, there
is an inequality of slopes $\mu (\tilde E')>\mu(E')$. This gives
$$d_B>d_A+2(g-1)-2\frac{d'}{r'}.$$

As $K_C(-B)$ is a quotient of $E''$ which is stable, there is the
inequality of slopes $\mu (K_C(-B))>\mu (E'')$. This inequality gives
$d_B<2(g-1)-\frac{r(g-1)-d'}{r-r'}$. Combining this with the
inequality above gives the claim.
\end{proof}

{\bf Claim 3}  If $H^0(\hat E \otimes K_C^\vee (B))\not= 0$, then
$E$ is not generic.

\begin{proof}The set of vector bundles that fit in an exact sequence
$$0\rightarrow E'\rightarrow E\rightarrow E''\rightarrow 0$$ has
dimension at most (see \cite{RT} Thm 0.1.)
$$r^2(g-1)+1+r'd-rd'-r'(r-r')(g-1).$$
We want to check that this
number is smaller than $r^2(g-1)+1-g$ and therefore even when $\xi$
varies, the $E$ cannot fill the moduli space.

From Claim 2, $r'd-rd'-r'(r-r')(g-1)<(r-r')(d'-r'(g-1))$. If $d'\le
r'(g-1)-\frac{g}{r-r'}$, then the result is clear.

If $d'> r'(g-1)-\frac{g}{r-r'}$, then
$$r'd-rd'-r'(r-r')(g-1)\le -r \frac{g}{r-r'}-r'(r-r')(g-1)\le -g$$
so the result is also true in this case.
\end{proof}

This completes the proof of Proposition \ref{generic}.
\end{proof}

\section{Secant varieties}\label{secsec}

We begin by proving the following general fact:

\begin{lem}
Suppose that $E\in \mathscr{U}(r,r(g-1))$, $\xi\in \textnormal{Pic}^0(C)$, and $h^0(E\otimes \xi)=dr$, for some positive integer $d$.  If there exists an effective divisor $D$ of degree $d$ on $C$, such that
$$h^0(E\otimes \xi(-D))=h^0(E^\vee\otimes \xi^\vee \otimes K_C(-D))=0,$$
then $ \textnormal{mult}_\xi\Theta_E=h^0(E\otimes \xi)$.
     \end{lem}
\begin{proof} Set $E'=E\otimes \xi$.  If there is a $D$ as in the lemma, then a general such $D$ will suffice.  Consider the first order deformation $\mathscr E'$ of $E'$ associated to $D$ (see Lemma \ref{lemcomp}), and the exact sequence
$$
0\rightarrow H^0(E')\rightarrow H^0(E'(D))\rightarrow H^0(E'(D)|_D) \rightarrow H^1(E') \rightarrow \ldots.
$$
Using Lemmas \ref{lemcomp} and \ref{lem19} one concludes that $h^0(\mathscr E')=h^0(E')$.  The result then follows from Lemma \ref{lemtheta}.
\end{proof}

When $\Theta_E\subseteq \textnormal{Pic}^0(C)$, the tangent cones to the singularities of the theta divisor can be related to the geometry of the canonical model of the curve.  To be precise, in the case that $r=1$, $d=0$, and $d'=r'(g-1)$, so that $\Theta_E\subseteq \textnormal{Pic}^0(C)$, it follows that
for $\xi\in \textnormal{Pic}^0(C)$,
the tangent cone $\mathbb{P}C_\xi \Theta_E\subseteq
\mathbb{P}H^0(C,K_C)^\vee$.  Let $\phi_K:C\rightarrow \mathbb{P}H^0(C,K_C)^\vee$ be the canonical morphism, let $\phi_K(C)$ be the canonical model of the curve, and let $s_n(\phi_K(C))$ be the $n$-th secant variety to the curve, where we use the convention that $s_0(\phi_K(C))=\phi_K(C)$.  Then we have the following:

\begin{teo}\label{teo4}
In the notation above, suppose $E\in \mathscr U(r,r(g-1))$ and $\xi\in \textnormal{Pic}^0(C)$.  If $\Theta_E$ is a divisor,  $\operatorname{mult}_\xi \Theta_E=h^0(E\otimes \xi)$, and $h^0(E\otimes \xi)>nr$, then
$s_{n-1}(\phi_K(C))\subseteq \mathbb{P}C_\xi \Theta_E$.
\end{teo}

\begin{rem}
Observe that \cite[Theorem VI 1.6 (i)]{acgh} is the special case where $r=1$.
\end{rem}

\begin{proof} Recall from Corollary  \ref{corcup} that if $\operatorname{mult}_\xi \Theta_E=h^0(E\otimes \xi)$, then
for nonzero $\alpha\in H^1(\mathscr O_C)$,
$[\alpha]\subseteq C_\xi\Theta$ if and only if
 the cup product map
$\cup \alpha :H^0(E\otimes \xi)\to H^1(E\otimes \xi)$ has a nontrivial kernel.  Now let $p\in C$, and consider the exact sequence
$$
0\to \mathscr O_C \to \mathscr O_C(p)\to \mathscr O_p(p) \to 0.
$$
This induces a coboundary map $\delta:H^0(\mathscr O_p(p))\to H^1(\mathscr O_C)$, and after identifying
$H^1(\mathscr O_C)=H^0(K_C)^\vee$, one can check that for nonzero $t\in H^0(\mathscr O_p(p))$,
$[\delta(t)]=\phi_K(p)\in \mathbb PH^0(K_C)^\vee$.
On the other hand, after tensoring the above exact sequence by $E\otimes \xi$, there is an induced coboundary map
$\partial:H^0(E\otimes \xi(p)|_p)\to H^1(E\otimes \xi)$, and Lemma \ref{lemcomp} shows that  $\cup \delta(t):H^0(E\otimes \xi)\to H^1(E\otimes \xi)$ is given by the composition
$$
H^0(E\otimes \xi)\stackrel{\cup t}{\to}H^0(E\otimes \xi(p)|_p)\stackrel{
\partial}{\to}H^1(E\otimes \xi).
$$

Thus if $h^0(E\otimes \xi(-p))\ne 0$, then $p\in \phi_K(C)$ is contained in $\mathbb P C_L\Theta$.  This is the case if $h^0(E\otimes \xi)>r$.  The proof of the general case is similar, replacing $p$ with a general effective divisor of degree $n+1$.
\end{proof}

\begin{cor}\label{corsecant}
Suppose $E\in \mathscr U(r,r(g-1))$ is general, and $\xi\in \textnormal{Pic}^0(C)$.  If $h^0(E\otimes \xi)>nr$, then
$s_{n-1}(\phi_K(C))\subseteq \mathbb{P}C_\xi \Theta_E$.
\end{cor}

\begin{rem}
In the well known case that $n=r=1$, this is saying that the canonical curve is contained in every tangent cone to a singular point of the theta divisor of the Jacobian.
It is a result of Green \cite{green} that in fact the quadric tangent cones cut out the canonical curve as a scheme.    For higher rank, it would be interesting to know if the canonical curve was cut out by degree $r+1$ tangent cones to $r$-theta divisors in the Jacobian.
\end{rem}

%%%%%%%%%%%%%%%%%%%%%%%%%%%%%%%%%%%%%%%%%%%%%%%%%%%%%%%%%%%%%%%%%%

\end{document}